\def\dOi{13(3:35)2017}
\definecolor{Red}{cmyk}{0,1,2,0}
\newcommand{\red}[1]{#1}
\title[Games and strategies as event structures]{Games and strategies as event structures}
\author[S.~Castellan]{Simon Castellan\rsuper a}
\address{{\lsuper{a,b}}Univ Lyon, CNRS, ENS de Lyon, UCB Lyon 1, LIP}
\email{simon.castellan@ens-lyon.fr}
\author[P.~Clairambault]{Pierre Clairambault\rsuper b}
\address{\vspace{-18 pt}}
\email{pierre.clairambault@ens-lyon.fr}
\author[S.~Rideau]{Silvain Rideau\rsuper c}
\address{{\lsuper c}Department of Mathematics, UC Berkeley}
\email{silvain.rideau@berkeley.edu}
\author[G.~Winskel]{Glynn Winskel\rsuper d}
\address{{\lsuper d}Computer Laboratory, University of Cambridge}
\email{glynn.winskel@cl.cam.ac.uk}
\thanks{We gratefully acknowledge the support of the French LABEX MILYON
(ANR-10-LABX-0070), and of the ERC Advanced Grant ECSYM.}
\begin{document}

\keywords{Games, event structures, concurrency}
\subjclass{F.3.2}

\maketitle

\begin{abstract}
  In 2011, Rideau and Winskel introduced \emph{concurrent games and strategies as event structures}, generalizing
  prior work on causal formulations of games. In this paper we give a detailed, self-contained and
  slightly-updated account of the results of Rideau and Winskel: a notion of pre-strategy based on event
  structures; a characterisation of those pre-strategies (deemed \emph{strategies}) which are preserved by 
  composition with a copycat strategy; and the construction of a bicategory of these strategies.
  Furthermore, we prove that the corresponding category has a compact closed structure, and hence forms
  the basis for the semantics of concurrent higher-order computation.
\end{abstract}



\section{Introduction}
Games are ubiquitous.  They appear in 
many areas, such as economics, logic, 
and computer science. They
provide a valuable language in which one can model situations where the evolution of a 
system is determined by the choices of several agents. The agents are players 
performing moves
according to rules that model the situation at hand, and the evolution of the system follows from
the sequence of moves reflecting the decisions of the players. The outcome of the game might
be a payoff for each player, a successful refutation of a logical formula, a bug exposed 
in a program -- or, in some instances, we might just be interested in the play itself as a 
description of the evolution of a system. Although games can in general involve many players, they 
often (as in this paper) focus on two players: 
Player (Proponent, $\exists$lo\"ise, Verifier, \dots) and Opponent ($\forall$b\'elard, Spoiler, \dots) 
each one defending their interests while subject to attacks of the other.
{Two-player games are perfectly suited to the representation of \emph{open systems}: one player plays for the
system at hand, while the other is deemed \emph{external} and plays for the environment. Player
may be regarded as representing a \emph{team} of players, whose interaction yields the system
under study.}

In their 
traditional formulation, games are highly sequential: the behaviour of a game determines 
 a tree of which the nodes are the positions and the branches describe the different choices available to a player.
The interaction between the players
results in the selection of a potentially infinite branch of the game tree. Most of the time,
each position belongs to exactly one player and the other has to wait until a move is played.
Often, the game also obeys the condition of \emph{alternation} where players are
additionally required to play in turns.

Despite this sequential nature, one also would like to use games to represent situations that are concurrent
or distributed, \emph{e.g.} several systems running in parallel, possibly with synchronizations
or shared resources. Of course such concurrent applications of games exist, but it is 
worth pointing out that in the overwhelming majority of cases concurrency is represented indirectly via the
interleaving, or linearization, of atomic actions of the participants. Rather than using a notion of game that 
does justice to the distributed nature of the system, a tree-based, inherently sequential
representation is opted for, where a branch is a total ordering of the implicitly partially ordered evolution
of the system. In other words, concurrency is modelled by removing alternation, but the basic
tree-based 
understanding remains unquestioned. Of course, that representation has been useful and sufficiently  accurate to a 
large extent, and a significant and successful body of work follows from this choice.  But we
believe nonetheless that a more precise \emph{causal} representation 
is to be preferred.
Our reasons and a further discussion on this point can be found in Section \ref{sec:evstrat}.

However, causal representations of concurrent processes have a richer structure than trees, and 
require more elaborate tools to be dealt with properly.  It was not clear at first on what mathematical
formalism one should rely on for this endeavour.
The first causal foundations for concurrent games emerged in the late nineties in the 
game semantics community; due to Abramsky and Melli\`es \cite{DBLP:conf/lics/AbramskyM99}, they were used to
build a fully complete model of multiplicative additive linear logic (MALL). The idea was 
to switch from a tree to a \emph{domain} of positions, and formulate (deterministic) strategies as closure operators
on this domain. Later, Melli\`es and Mimram \cite{DBLP:conf/concur/MelliesM07} connected this position-based approach to a more traditional 
play-based formulation in the framework of asynchronous games -- in this setting (deterministic) strategies were manipulated
as traditional sets of plays, but with closure properties ensuring an underlying causal order between moves.
In parallel, Faggian and Piccollo \cite{DBLP:conf/tlca/FaggianP09} had developed a setting where the (deterministic) strategies were manipulated explicitly as
partial orders, rather than the partial order being recovered \emph{a posteriori}. Finally, in 2011 Rideau and
Winskel \cite{lics11} generalized all prior work by proposing a setting where (non-deterministic) strategies are described
as event structures, thus benefiting from a 
body of work on event structure models for concurrency.

The present paper aims to be a detailed and self-contained introduction to this latter formulation of concurrent games:
it covers details and extends the results of \cite{lics11}. In Section \ref{sec:evstrat}, we start with a gentle 
introduction to the basic ideas behind the representation of concurrent processes as event structures, with
an eye towards the application to games. In this setting, both games and ``pre-strategies'' playing on them
are event structures, with a pre-strategy being essentially an event structure labelled by moves of
the game. But pre-strategies, thought of as prototypical strategies for Player,
 are too expressive: they 
{may} impose unreasonable constraints on  Opponent, 
and can behave in ways that are not consistent with
their standing for interaction in an asynchronous distributed environment.
As an answer to this, strategies are introduced in Section \ref{sec:strategies} as the pre-strategies that are preserved under composition with an \emph{asynchronous forwarder}, formalized as a copycat strategy. This provides an 
adequately robust notion of strategy on an event structure, and a non-deterministic generalization of the earlier
notions of concurrent strategies mentioned above. We prove the main result of \cite{lics11}: that strategies are
exactly the pre-strategies obeying conditions called \emph{receptivity} and \emph{courtesy}.
The paper~\cite{lics11} also constructed a \emph{bicategory} of concurrent games and 
strategies between them, akin to Joyal's category of Conway games \cite{JoyalGazette}.
In Section \ref{sec:bicategory}, we give a detailed proof of that result. Finally
in Section \ref{sec:compact} we show that just as Joyal's category, our category is compact closed and can provide a basis
for games-based models of higher-order computation. In Section \ref{sec:conclusion}, we conclude.

\paragraph{Other related work.} Many other notions of games for concurrency have appeared in the literature. 

In the verification community, ``concurrent games'' \cite{cglics00,cgtcs07} refer to variations of Blackwell
games \cite{martin1998determinacy}: there is a
tree (or a graph) of positions. The game is played in rounds: at each round, both players select their
behaviour from a pool of possible actions. This selection is independent, and with no information on 
the other player's choice. The next position is decided as a function of both player's choices. 
In contrast 
to our setting, their focus is on enforcing the independence of the two players in each round, rather than 
describing a general concurrent computation. In particular, plays are still totally ordered. Games in event
structures are closer to the games played in Zielonka automata \cite{DBLP:conf/icalp/GenestGMW13}, which could be unfolded to event structures.
However, our focus is more on the unfoldings themselves, and on their compositional structure.

Through our focus on compositionality, we are very close to the notions of games for concurrency studied in
the semantics community \cite{DBLP:journals/entcs/Laird01,DBLP:journals/apal/GhicaM08}. Just as
{we do}, they form categories of games and strategies where concurrent processes
can be modelled. However, these models are based on interleavings rather than partial orders: rather than 
opting for a primitive representation of concurrency based on partial orders, they represent
the execution of a concurrent process 
via the non-deterministic schedulings of its possible actions.

Finally, in a different direction, let us cite the ``playgrounds'' of Hirschowitz \emph{et al} \cite{DBLP:journals/cuza/HirschowitzP12,DBLP:conf/calco/Hirschowitz13},
and the multi-token Geometry of Interaction of Dal Lago \emph{et al} \cite{DBLP:conf/csl/LagoFHY14}.
Both formalisms aim at providing a non interleaving-based representation of concurrent processes and
of their execution. They should both relate to our approach, in the sense that from their settings one could
extract an event structure, which is arguably more abstract and syntax-independent than the models used there.

\section{Event structures, games and pre-strategies}
\label{sec:evstrat}

In this section we introduce the basic notions underlying our development, from event structures
to pre-strategies 
represented by 
them.

\subsection{Events for concurrent and distributed systems}\label{sec:es}

\paragraph{Causality and independence.}
It is common to describe the evolution of a process or system by listing its \emph{events},
\emph{i.e.} the observable actions occurring through time. For instance, one could describe
an interaction with a coffee vending machine as a sequence:
\[
\coin \cdot \coffee
\]
that we call a \textbf{trace}, where $\coin$ represents the action of inserting a coin in the machine, and $\coffee$ represents
the action of getting a coffee. In fact, the input/output behaviour of the vending machine may be
modelled by the set:
\[
\mathrm{Coffee} = \{\epsilon, \coin, \coin\cdot \coffee\}
\]
where $\epsilon$ is the empty sequence (and with possibly more iterations of the interaction if
one is not interested in a one-use coffee vending machine). Nearby the coffee machine, there is
a tea machine modelled by:
\[
\mathrm{Tea} = \{\epsilon, \coin', \coin' \cdot \tea\}
\]
where we use $\coin'$ to distinguish it from $\coin$. 

The two machines may be interacted with in parallel -- one may for instance pay for a coffee, 
then, while waiting for the machine to deliver, also pay for a tea, and then obtain both. This behaviour
may be represented as $\coin \cdot \coin' \cdot \coffee \cdot \tea$. In fact, the system 
formed by both machines can be modelled as:

\[
\begin{array}{l}
\{ \epsilon, \coin, \coin \cdot \coin', \coin \cdot \coffee, \coin \cdot \coin' \cdot \coffee, \coin \cdot \coffee \cdot \coin',\\
\coin \cdot \coin' \cdot \tea, \coin \cdot \coin' \cdot \coffee \cdot \tea, \coin \cdot \coin' \cdot \tea \cdot \coffee,\\
\coin \cdot \coffee \cdot \coin' \cdot \tea, \coin', \coin'\cdot \coin, \coin' \cdot \tea, \coin' \cdot \coin \cdot \tea,\\
\coin' \cdot \tea \cdot \coin, \coin' \cdot \coin \cdot \tea \cdot \coffee, \coin'\cdot \coin \cdot \coffee,\\
\coin' \cdot \coin \cdot \coffee \cdot \tea, \coin' \cdot \tea \cdot \coin \cdot \coffee\}
\end{array}
\]
This follows the so-called \emph{interleaving-based} approach to modelling concurrent and parallel systems: that two
independent processes interacted with in parallel should behave as the set of interleavings of the traces of
the original processes. This approach proved 
powerful and versatile, and 
provides the basis for 
most developments on models of concurrency. 

However, it suffers from some drawbacks. To cite two of them: \emph{(1)} as should appear clearly in our example, this representation gets exponentially bigger than 
the original system -- this is the so-called \emph{state explosion problem}, which is a 
main challenge in interleaving-based
model-checking of concurrent systems, \emph{(2)} it is unreadable, and obfuscates the key information 
of which events \emph{depend} 
on which events. Instead of the large set of traces above, one would like to 
manage with 
only the \emph{partial order} generating it displayed
in Figure \ref{fig:par_comp}, 
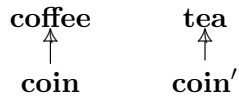
\begin{figure}[h!]
\[
\xymatrix@R=10pt{
\coffee&\tea\\
\coin	\ar@{-|>}[u]&
\coin'	\ar@{-|>}[u]
}
\]
\caption{Partial order semantics for the coffee and tea machines}
\label{fig:par_comp}
\end{figure}
for which the set of traces above is the set of all linearizations.
This idea is far from new: advocated first by Petri, it is
known as the \emph{independence}, \emph{partial order},  \emph{causal}, or \emph{truly concurrent}
approach to models of concurrency.
Although causal models yield smaller and more intuitive representations of the dynamics of a concurrent process, they can be quite subtle to manipulate and manage.
Operations that are straightforward for interleaving-based approaches, and say based on simple induction, can be more mathematically involved when carried out in a partial-ordered setting.

\paragraph{Event structures.}
Our example above is purely deterministic: it appears visibly in the partial order of Figure \ref{fig:par_comp} that 
no irreversible choice is ever made in the evolution of the system.
Whatever order the events of a prefix of the partial order of Figure \ref{fig:par_comp} appear, they 
can be completed to the maximal set $\{\coin, \coffee, \coin', \tea\}$. In this sense the order
in which these events occur is irrelevant. To express non-determinism, one needs to enrich the partial order. A natural way to do that is to
follow Winskel \cite{DBLP:conf/ac/Winskel86} and add a \emph{consistency} relation on top of the partial order, as follows. 
\begin{defi}[Event structures]
An \textbf{event structure} (\textbf{es} for short) is {a tuple} $(E, \leq_E, \Con_E)$ where $E$ is a set of events, $\leq_E$ is a
partial order on $E$ called \textbf{causality} and $\Con_E$ is a non-empty
set of finite subsets of $E$ called \textbf{consistency}, such that:
\[
\begin{array}{l}
\forall e\in E,~[e] = \{e'\in E\mid e' \leq_E e\}~\text{is finite},\\
\forall e\in E,~\{e\} \in \Con_E,\\
\forall X\in \Con_E,~\forall Y \subseteq X,~Y\in \Con_E,\\
\forall X\in \Con_E,~\forall e\in X,~\forall e'\leq_E e,~X\cup \{e'\} \in \Con_E
\end{array}
\]
We will often omit the subscripts in $\leq_E$, $\Con_E$ if they are obvious from the context{, and
use $E$ both for the event structure and its underlying set of events.}
\end{defi}

If $X\subseteq E$ is in $\Con$, then we say that it is
\textbf{consistent}, and its events 
may occur
together. The {states} of an event structure $E$, called
\textbf{configurations}, are the 
sets $x \subseteq E$ that are
both consistent (in the sense that every finite subset belongs to $\Con_E$) and \textbf{down-closed} (\emph{i.e.}  for all
$e\in x$, for all $e'\leq e$, one has $e'\in x$).
Here we shall work exclusively with finite configurations, those
  finite sets $x \subseteq E$ that are
both consistent and down-closed; the set of such
configurations of $E$ is written $\conf{E}$, and is partially ordered
by inclusion. Configurations with a maximal element are called
\textbf{prime configurations}, they are those of the form $[e]$ for
$e\in E$. We will also use the notation $[e) = [e] \setminus \{e\}$.
Between configurations, the \textbf{covering relation} $x \cov y$
means that $y$ is obtained from $x$ by adding exactly one event: $y$
is an \textbf{atomic extension} of $x$. We might also write
$x \longcov{e}$ to mean that $e \not \in x$ and
$x \cup \{e\} \in \conf{E}$; this says that the event $e$ is enabled at $x$.  
It is easy to prove that the relation $\subseteq$ between configurations is the transitive reflexive
closure of $\cov$; in fact $\cov$ is its transitive reduction.

When drawing event structures,
we will not 
portray the full partial order $\leq$ but its transitive reduction; the
\textbf{immediate causality} generating it defined as $e \imc e'$
whenever $e < e'$ and for any $e \leq e'' \leq e'$, either $e = e''$
or $e'' = e'$. Finally, we say two events $e, e'$ 
are 
\textbf{concurrent} when they are
consistent and incomparable for $\leq_E$.

Event structures can express non binary conflict
, \emph{e.g.} one can have three events $\{1, 2, 3\}$
{with consistent subsets those with at most} two elements: all events are
pairwise 
consistent, but not the three of them together. This extra generality makes for a smooth theory, 
but in many examples consistency is equivalently described by a complementary irreflexive binary \emph{conflict} relation $\conflict$, that
relates any two events that \emph{cannot} occur together, \emph{i.e.} $X \in \Con$ iff for all $e, e'\in X$, 
$\neg (e\conflict e')$. It follows then from the axioms of event structures that if $e \conflict e'$ and $e'\leq e''$,
{then} $e \conflict e''$ as well -- we call this conflict \textbf{inherited}. A conflict $e \conflict e'$ that is not inherited
is called \textbf{minimal}, and represented as $\xymatrix@C=10pt{e \ar@{~}[r]&e'}$. In order to alleviate the notation, when drawing
event structures with binary conflict we only represent minimal conflicts.

As an example, consider a (less popular) variant of the coffee machine above: when a coin is inserted it will produce 
a tea or a coffee, nondeterministically. The corresponding event structure can be represented as
{follows}:

\[
\xymatrix@R=10pt@C=10pt{
\coffee\ar@{~}[rr]&&\tea\\
&\coin
	\ar@{-|>}[ul]
	\ar@{-|>}[ur]
}
\]
Its configurations are $\{\{\emptyset\}, \{\coin\}, \{\coin, \coffee\}, \{\coin, \tea\}\}$.
We will never get \emph{both} tea and coffee even though both are enabled by $\coin$.

\paragraph{Simple parallel composition.} Whereas using traces the operation of putting two systems in parallel without communication
or interaction was the 
source of a combinatorial explosion, in event structures it only consists in putting
two event structures side by side. For instance, the event structure of Figure \ref{fig:par_comp} is obtained in a transparent way 
from event structures for the coffee and tea machines. 
Generally:
\begin{defi}
  Given two event structures $E$ and $F$ their \textbf{simple parallel composition} (or just \emph{parallel composition} for short)
  $E  \parallel  F$ is defined as the event structure comprising:
  \begin{itemize}
  \item \emph{Events:} $\{0\}  \times  E \cup \{1\}  \times  F$ (tagged disjoint union of $E$ and $F$),
  \item \emph{Causality}: $(i, c)  \leq _{E \parallel F} (j, c')$ when $i = j = 0$ and
    $c  \leq _E c'$ or $i = j = 1$ and $c  \leq _F c'$,
  \item \emph{Consistency} defined as:
    $$X  \in  \Con_{E  \parallel  F}\text{ iff } \{a \mid (0, a)  \in  X\}  \in  \Con_E\, \&\, \{b \mid (1, b)  \in  X\}  \in  \Con_F$$
  \end{itemize}
\end{defi}
Thus, $E \parallel F$ is $E$ and $F$ put side-by-side with no
causality or conflict between them. As a result, configurations of
$E \parallel F$ can be easily described in terms of those of $E$ and
$F$ -- namely there is a canonical order-isomorphism
$ \mathscr{C} (E \parallel F) \cong \mathscr{C} (E) \times \mathscr{C}
(F)$
(where configurations are ordered by inclusion). We will denote by
$x  \parallel  y  \in   \mathscr{C} (E  \parallel  F)$ the configuration corresponding to 
$(x, y)  \in   \mathscr{C} (E)  \times   \mathscr{C} (F)$.
When denoting events of a parallel composition $E_1 \parallel E_2$, we will not always 
write the explicit injections (as in $(0, e)$ or $(1, e)$). Instead, we will often
annotate or name the events so as to disambiguate the components they belong to (as in \emph{e.g.}
$e_1, e_2$).

\paragraph{Conjunctive causality and projection.}
In 
this setting of event structures causality
is \emph{conjunctive} rather than \emph{disjunctive}: states/configurations need to be down-closed, so for an 
event to occur it is required that \emph{all} of its dependencies have occurred before. For instance,
in the event structure of Figure \ref{fig:vendingmachine} the user needs to \emph{both} insert a coin and press a button in order
to get a drink (inserting a coin and pressing both buttons results in a non-deterministic choice).

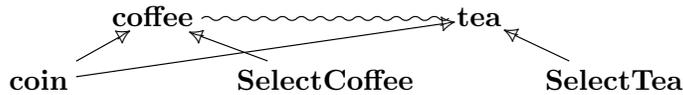
\begin{figure}[h!]
\[
\xymatrix@R=10pt@C=10pt{
&\coffee\ar@{~}[rr]&&\tea\\
\coin	\ar@{-|>}[ur]
	\ar@{-|>}[urrr]&&
\mathbf{SelectCoffee}
	\ar@{-|>}[ul]&&
\mathbf{SelectTea}
	\ar@{-|>}[ul]
}
\]
\caption{An event structure for a vending machine with selection}
\label{fig:vendingmachine}
\end{figure}

Plain event structures cannot express that an event may occur for two distinct, independent reasons -- such as
in saying that $\coffee$ can be obtain through a $\coin$ or through an override mechanism. In event structures, expressing that would require
two distinct events $\coffee$ and $\coffee'$, with different causal histories.
The apparent limitation that each event has a unique, unambiguous causal history enables us to perform the following
\emph{projection} operation\footnote{{
\emph{General event
structures}\cite{winskel-thesis} avoid this restriction, though they do not support a reasonable projection operation.}}:

\begin{defi}
If $E$ is an event structure and $V \subseteq E$ is a subset of events, then the \textbf{projection} $E \proj V$ has $V$ as events,
and causality and consistency directly inherited from $V$: if $e_1, e_2 \in V$ then $e_1 \leq_{E\proj V} e_2$ iff $e_1 \leq_E e_2$,
and for $X$ a finite subset of $V$, $X\in \Con_{E\proj V}$ iff $X \in \Con_E$.
\end{defi}

In other words, the projection $E\proj V$ is obtained by considering the events not in $V$ to be invisible: they occur silently,
and are not observable anymore. Because causality is conjunctive, for an event $e\in E\proj V$ there is never any ambiguity
as to what events caused it in $E$. 
Each configuration $y\in \conf{E}$ projects to $y\cap V \in \conf{E\proj V}$ -- reciprocally, any $x\in \conf{E\proj V}$
has a minimal \textbf{witness} $[x]_E = \{e'\in E \mid e'\leq_E e \in x\} \in \conf{E}$, yielding a bijection:
\[
\begin{array}{rcl}
\conf{E\proj V} &\iso& \{x \in \conf{E} \mid \forall e \in x\text{ maximal},~e\in V\}\\
x &\mapsto& [x]_E\\
y\cap V &\raisebox{5pt}{\text{\rotatebox{180}{$\mapsto$}}}& y
\end{array}
\]
that preserves and reflects inclusion.
This feature will be key 
to the \emph{hiding} step of the composition of strategies,
introduced later.

\paragraph{Polarity and pre-strategies.} We now move towards games. We consider \emph{two-player games} 
between Player (considered as having positive polarity) and Opponent (considered as having negative polarity).
Each event is equipped with a polarity, indicating which player has the responsibility to play it.

\begin{defi}
An \textbf{event structure with polarities} (esp for short) is an event structure $A$ along with a function
\[
\pol_A : A \to \{-, +\}
\]
associating to each event a polarity.
\end{defi}

When introducing events of an esp $A$, we might annotate them in order to indicate their polarity. For instance,
in ``let $a^- \in A$'', $a$ ranges over all events of $A$ of negative polarity. For configurations
$x, y \in \conf{A}$, we will write $x \subseteq^- y$ if $x\subseteq y$ and all events in $y\setminus
x$ are negative; $x \subseteq^+ y$ is defined dually. For {an esp} $A$,
we will write $A^\perp$ for its \textbf{dual}, \emph{i.e.} $A$ with the same data, except for the polarity which is
reversed. 

{ We define \textbf{games} to be simply esps. The terms
  \emph{game} and \emph{esp} will however not be used interchangeably:
  we will use \emph{games} for those esps used to specify the
  interface at which two players interact (\emph{strategies} will also
  be certain esps).}  For instance, one could model the interface of
the vending machine above by saying that Player plays 
according to the 
program of the coffee machine, Opponent plays for the user, and the
game describes the observable actions through which they interact on
the physical device.  Following this idea, the game for the physical
interface of the coffee machine would have events
$\{\coin^-, \mathbf{SelectCoffee}^-, \mathbf{SelectTea}^-, \coffee^+,
\tea^+\}$,
for causality the discrete partial order (\emph{i.e.} the order
contains only the reflexive pairs), and all sets consistent.  This
game is a discrete partial order, but in general games can feature
non-trivial causality and consistency.

The \emph{strategy} for Player would then describe the behaviour of the vending machine at this interface, represented as an event structure as well
(such as Figure \ref{fig:vendingmachine}). Both \emph{games} and \emph{strategies} are expressed as esps; they will nonetheless
play very different roles in the development. Following this idea, we now define \emph{pre-strategies} -- \emph{strategies},
defined later, will be subject to further conditions.

\begin{defi}
A \textbf{pre-strategy} on a game $A$ is an esp $S$ \emph{labelled} by $A$, that is, a function
$\sigma : S \to A$
which:
\begin{enumerate}
\item[(1)] Obeys the rules of the game (preserves configurations):
\[
\forall x \in \conf{S},~\sigma x \in \conf{A}
\]
\item[(2)] Plays linearly (local injectivity):
\[
\forall s, s' \in x \in \conf{S},~\sigma s = \sigma s' \implies s = s'
\]
\item[(3)] Preserves polarity: 
\[
\forall s\in S,~\pol_A(\sigma s) = \pol_S(s)
\]
\end{enumerate}
{Note that $\sigma$ does not need to preserve the order. On the other hand, as we will see later (Lemma
\ref{lemma:reflect}) it follows from these axioms that it always reflects it for consistent events.}
\end{defi}

As announced, a pre-strategy on $A$ is an esp $S$ along with a labelling function $\sigma : S \to A$. The esp structure
on $A$ brings constraints, that the labelling function has to respect.
It is easy to check that the event structure of Figure \ref{fig:vendingmachine} is a pre-strategy on
the game $M = \{\coin^-, \mathbf{SelectCoffee}^-, \mathbf{SelectTea}^-, \coffee^+, \tea^+\}$ (with
trivial causality and all subsets consistent), 
with the obvious map to it given by the labels. In the rest of this paper, when 
drawing pre-strategies we will follow the presentation of Figure \ref{fig:vendingmachine}: we will draw the event structure $S$, with
events written as their image 
via $\sigma$.

\red{
\begin{rem}
A pre-strategy $\sigma : S \to A$ is \emph{locally injective} but may not be \emph{injective}: there
may well be several incompatible events in $S$ mapping to the same event in $A$. This is true even
for two events $s_1, s_2 \in S$ sharing the same causal history, \emph{e.g.} $[s_1) = [s_2)$. For
instance, the following variant of Figure \ref{fig:vendingmachine} represents a valid pre-strategy on
the game $M$ above.
\[
\xymatrix@R=10pt@C=10pt{
\coffee^+\ar@{~}[r]&\coffee^+\\
\coin^-   \ar@{-|>}[u]
        \ar@{-|>}[ur]&
\mathbf{SelectCoffee}^-
        \ar@{-|>}[ul]
	\ar@{-|>}[u]
}
\]
The intuition here is that the coffee machine secretly tosses a coin (it has plenty of those after
all), but the result does not change its behaviour: it serves the coffee nonetheless. The 
intensional information of this nondeterministic choice, despite being unobservable, can be recorded
by our semantics. This intensionality means that our model will not validate the idempotency law for
nondeterministic choice: the event structures $\xymatrix{e\ar@{~}[r]&e}$ and $e$ are not isomorphic.

Though it is not covered in this paper, one may opt for a variant of our setting that does validate
idempotency (see \emph{e.g.} \cite{concur}).
But as a consequence we would also lose the branching information. To illustrate this,
consider the following variant of the pre-strategy above, playing on an extension of $M$ popular in
the United Kingdom.
\[
\xymatrix@R=10pt@C=10pt{
\mathbf{milk}^+\\
\mathbf{SelectMilk}^-
	\ar@{-|>}[u]&
\mathbf{SelectMilk}^-\\
\coffee^+\ar@{~}[r]
	\ar@{-|>}[u]&\coffee^+
	\ar@{-|>}[u]\\
\coin^-   \ar@{-|>}[u]
        \ar@{-|>}[ur]&
\mathbf{SelectCoffee}^-
        \ar@{-|>}[ul]
        \ar@{-|>}[u]
}
\]
In this version, the user has the possibility of requesting milk after the coffee is served. But in
the pre-strategy above, they may not always get it: something may go wrong in the machine before the
coffee is served, leading to a state where $\mathbf{SelectMilk^-}$ is ineffective and the user
frustrated. The highly
intensional, non-idempotent representation of concurrency we opt for in this paper allows us to
record this branching information. In settings where this branching information is not relevant, such as
in \cite{concur}, it can be easily forgotten.
\end{rem}
}

{Besides being natural (we hope) as a first tentative definition of strategies, the pre-strategies
defined above match the standard notion of \emph{maps} between event structures.

\begin{defi}
If $E, F$ are event structures, a (total) \textbf{map of event structures} from $E$ to $F$ is a
function on events $f : E \to F$ satisfying \emph{(1)} and \emph{(2)} above.

The identity function on $E$ is a map of event structures and those are stable under composition; in other
words there is a category $\ES$ of event structures and maps.
\end{defi}

Later on we will also consider \emph{partial} maps between event structures (Definition
\ref{def:partialmap}), but throughout this paper all maps are considered total unless explicitly
said otherwise. We also have a category $\ESP$ of esps, and maps \emph{preserving polarities} --
technically pre-strategies are exactly maps of esps. We keep a distinguished terminology, because in
the sequel we will encounter maps of esps that it is unwise to regard as pre-strategies.}

We note in passing that simple parallel composition extends to esps by defining the polarity of
$A \parallel B$ as $\pol_{A \parallel B}(0, a) = \pol_A(a)$ and
$\pol_{A \parallel B}(1, b) = \pol_B(b)$ {-- this entails that parallel composition commutes
with the duality operation, \emph{i.e.} $(A\parallel B)^\perp = A^\perp \parallel B^\perp$}. 
Two pre-strategies $ \sigma  : S  \rightarrow  A$ and
$ \tau  : T  \rightarrow  B$ playing respectively on $A$ and $B$ can be combined to
form a pre-strategy $ \sigma   \parallel   \tau  : S  \parallel  T  \rightarrow  A  \parallel  B$ defined by
$( \sigma   \parallel   \tau )(0, a) = (0,  \sigma (a))$ and $( \sigma   \parallel   \tau )(1, b) = (1,  \tau (b))$. 
In fact with this definition, simple parallel composition acts functorially on maps of es and esp and
equip the categories $\ES$ and $\ESP$ with the structure of a symmetric monoidal category (with the 
empty event structure $1$ as unit).

At this point, the reader may find confusing the fact that although there are polarities in games
and pre-strategies, these are not taken into special account in the definition of pre-strategies
{(besides their preservation by the labeling function).
This makes pre-strategies more powerful than perhaps wished:
a pre-strategy may constrain the external Opponent beyond the rules of the game.
Taking $S = 1$ the empty event structure, and writing $\ominus$ for the esp with just one negative event, the empty
map
$S \to \ominus$
is a valid pre-strategy. As $S$ has no counterpart for the unique negative move in the game, this pre-strategy
fails to acknowledge Opponent's right to play it. This is in contradiction with the idea that 
Opponent's available actions should only depend on the game, and not be controllable by Player. We
will see in Section \ref{sec:strategies} other ways in which pre-strategies may constrain Opponent in
unintended ways.}

This is because the current definition is an intermediate step, towards the notion of strategy
introduced in Section \ref{sec:strategies}
that will take polarity more carefully into account. Whereas pre-strategies axiomatize the
polarity-agnostic description of the evolution of a concurrent process
on an interface, strategies will satisfy polarity-specific constraints, \emph{e.g.} a strategy
cannot prevent its opponent from playing a move 
enabled in the game. But for the remainder of this section, polarities 
are present only to set the stage for Section \ref{sec:strategies}.

\subsection{Interaction of pre-strategies}\label{sec:pb} Pre-strategies playing on
$A^\perp$ are pre-strategies for Opponent 
or \emph{counter pre-strategies}. Given a pre-strategy
$ \sigma : S \rightarrow A$ and a counter pre-strategy $ \tau : T \rightarrow A^\perp$,
we proceed to explain how they \emph{interact} with each other. {As $\sigma$ and $\tau$ have
opposite expectations for the polarity of events in $A$, their interaction will drop all information
about polarities, \emph{i.e.} it will be a \emph{map of event structures} (in $\ES$):
\[
\sigma \wedge \tau : S \wedge T \rightarrow A
\]
where $A$ is silently coerced to an event structure \emph{without polarities}. We regard it again as
an event structure $S\wedge T$ describing the causal structure of actions that both $\sigma$ and
$\tau$ agree to, along with a labeling $\sigma \wedge \tau$ of $S\wedge T$ by events of (a polarity-agnostic version of)
$A$.

In fact, the \emph{definition} of interaction makes use neither of the polarity information in $S, T$, or
$A$, nor of its preservation by $\sigma$ and $\tau$. So we will define it in this section as an operation
which for two maps $\sigma : S \to A$ and $\tau : T \to A$ in $\ES$, yields a map $\sigma
\wedge \tau : S \wedge T \to A$ in $\ES$. Polarities will only become relevant again in Section
\ref{sec:comp}, when we define composition.}


As we will see, interaction is very close to the product of event structures used
in \cite{icalp82,DBLP:conf/ac/Winskel86} to interpret the synchronising parallel composition of CCS (we will
see that it corresponds to a \emph{pullback} in $\ES$).

\paragraph{Secured bijections.}\label{sec:secbij}
The interaction of $ \sigma $ and $ \tau $ should follow the behaviour
that $\sigma$ and $\tau$ agree on: in a given state, it should be ready to play
$c \in A$ whenever $\sigma$ and $\tau$ are. In particular, this means
that an event $c \in A$ played by $ \sigma $ and $ \tau $ should be
played in their interaction only after all the dependencies in $S$ and
$T$ are satisfied. For instance the interaction of the following two
event structures labelled on the interface $A = a\ b\ c$ (consisting in
three concurrent events)

$$\xymatrix@C=0.3cm@R=10pt{
c & & & & & & & & & & c \\
a\ar@{-|>}[u] & & b & & & &  & & a & & b \ar@{-|>}[u] \\
& ( \sigma ) & & & & & & & & ( \tau )
}$$
should give rise to the interaction $ \sigma   \wedge   \tau $:

$$\xymatrix@C=0.3cm@R=10pt{
& c \\
a\ar@{-|>}[ur] & & b \ar@{-|>}[ul] \\
& ( \sigma   \wedge   \tau )
}$$
with immediate causal links imported from both $S$ and $T$. 
Similarly, a set of events should be consistent in the interaction when the
corresponding projections in $S$ and $T$ are.

At this point, one is tempted to define 
the events of
$S \wedge T$ as \emph{synchronized events}: pairs
$(s, t) \in S \times T$ such that $ \sigma s =\tau t$. This works
correctly when the maps $ \sigma $ and $ \tau $ are injective but fails in general.
For instance, consider the interaction of the two labelled event structures:

$$\xymatrix@C=0.15cm@R=10pt{
 & b & & & & & & & & & & b  \\
a \ar@{~}[rr] & & a' & & & & & & & & & a\ar@{-|>}[u] \\
& ( \sigma ) & & & & & & & & &  & ( \tau )
}$$

Here, $ \sigma $ has two copies $a$ and $a'$ of the event
$a \in A$ (by local injectivity, the two copies must be in conflict)
and $ \tau $ plays $b$ after $a$. However, because $ \sigma $ has two ways of
playing $a$, the interaction has two possible causal histories
for $b$: either after $(a, a) \in S \times T$ or after
$(a', a) \in S \times T$. Since in event structures, each event
comes with a unique causal history, those two histories for $b$ must correspond to
\emph{two different events} in $S \wedge T$, which should therefore
look like:

$$\xymatrix@R=10pt{
b & b'\\
\ar@{-|>}[u]a \ar@{~}[r] & a'\ar@{-|>}[u] \\
}$$

We see that $S \wedge T$ has four events, whereas there are only
three possible synchronized pairs: $(a, a)$, $(a', a)$ and $(b, b)$ --
thus events of $S \wedge T$ will be more than just pairs.

{Our approach will be to construct the desired event structure $S \wedge T$
indirectly via the set of configurations that we wish it to have.
In the example above, configurations of the diagram} \emph{are} in one-to-one
correspondence with synchronized configurations: pairs
$(x, y) \in \mathscr{C} (S) \times \mathscr{C} (T)$ such that
$ \sigma x = \tau y$. By local injectivity, in such a situation
$ \sigma $ and $ \tau $ induce a bijection
$ \varphi : x \simeq \sigma x = \tau y \simeq y$ that is not
order preserving in general (we use the notation $\simeq$, as opposed to $\cong$,
to insist on the fact that although $x, \sigma\,x, \tau\,y$ and $y$ are canonically partially
ordered by $\leq_S, \leq_A, \leq_T$, these bijections do not preserve this order).
Note that its graph is a set of synchronized (paired) events as above. 

Such bijections will be 
used to represent \emph{configurations} of the interaction. But
as configurations of an event structure (yet to be defined), the
graphs of these bijections should be ordered as well.
As shown above, the order on
$S \wedge T$ should be inherited from that of $S$ and $T$. However,
the transitive closure of the relation induced by the orders of $S$ and
$T$ is, in general, not an order. For instance in the following picture

$$\xymatrix@R=10pt{
\mathbf{Drug} & & & & \mathbf{Money} \\
 \ar@{-|>}[u] \mathbf{Money} & & & & \mathbf{Drug}\ar@{-|>}[u]  \\
( \sigma ) & & & & ( \tau )
}$$
there is a \emph{deadlock}: $ \sigma $ (the dealer) waits for the
money to be delivered before 
presenting the drug 
while $ \tau $ (the buyer)
waits for the drug before 
offering the dollars. Their interaction
should be empty as in the empty configuration there is no common event
that $ \sigma $ and $ \tau $ are both ready to play. This is reflected
by the fact that on the bijection
$\{(\mathbf{Money}, \mathbf{Money}), (\mathbf{Drug}, \mathbf{Drug})\}$
the preorder induced by $S$ and $T$ is not an order: it has a loop. To
eliminate such loops, we introduce \emph{secured bijections}:

\begin{defi}[Secured bijection]
  A \textbf{secured bijection} between two (finite) orders $(q,  \leq _q)$ and $(q',  \leq _{q'})$ is
  a bijection $ \varphi  : q  \simeq q'$ such that
  the reflexive and transitive closure of the following relation on the graph of $ \varphi $ is an order:
  $$(a, b) \vartriangleleft (a', b')\quad\text{when}\quad a <_q a'\text{ or }b <_{q'} b'$$
\end{defi}

Secured bijections need not preserve the order but they do not
contradict it: if $a <_q b$ then $ \varphi\,b \not <_{q'}  \varphi\,a$ as otherwise this would
constitute a cycle. 

{
Equivalently, secured bijections are those bijections satisfying a reachability property akin to one of
configurations of event structures, which can always be reached from the empty configuration by
successive additions of events. We invite the reader to check the following lemma, which is useful
in forging an intuition on the role of the notion.

\begin{lem}
Let $(q, \leq_q)$ and $(q', \leq_{q'})$. Then a bijection $\varphi : q \simeq q'$ is secured, iff
there is a sequence of (graphs of) bijections:
\[
(\varphi_0 : x_0 \simeq y_0) \longcov{(a_1, b_1)} (\varphi_1 : x_1 \simeq y_1) \longcov{(a_2, b_2)}
\dots \longcov{(a_n,b_n)} (\varphi_n : x_n \simeq y_n)
\]
such that $\varphi_0$ is the empty bijection, $\varphi_n = \varphi$, and for all $0 \leq i \leq
n$, $x_i \in \conf{q}$ and $y_i \in \conf{q'}$ (\emph{i.e.} they are down-closed).
\end{lem}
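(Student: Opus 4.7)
The plan is to prove both directions by exploiting the equivalence between topological orderings of the relation $\vartriangleleft$ on the graph of $\varphi$ and the covering sequences described in the statement. Throughout, write $\Gamma$ for the graph of $\varphi$, which is finite since $q, q'$ are finite.

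For the forward direction, assume $\varphi$ is secured, so that the reflexive transitive closure $\vartriangleleft^*$ is a partial order on $\Gamma$. Since $\Gamma$ is finite, I would pick any linear extension of $\vartriangleleft^*$, enumerating the pairs as $(a_1, b_1), \dots, (a_n, b_n)$. I then define $\varphi_i$ to be the restriction of $\varphi$ to $x_i := \{a_1, \dots, a_i\}$ (with image $y_i := \{b_1, \dots, b_i\}$). The only non-trivial thing to check is that each $x_i$ is down-closed in $q$ (and dually for $y_i$ in $q'$): if $a' \leq_q a_k$ with $k \leq i$, then by bijectivity of $\varphi$ we have $a' = a_\ell$ for a unique $\ell$, and either $a' = a_k$ or $(a_\ell, b_\ell) \vartriangleleft (a_k, b_k)$, which by the choice of linear extension forces $\ell \leq k \leq i$. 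Successive $\varphi_i$ and $\varphi_{i+1}$ differ by exactly the pair $(a_{i+1}, b_{i+1})$, so the resulting sequence has the required covering shape.

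For the converse, suppose we are given a sequence of covering extensions as in the statement. Reading off the pairs $(a_1, b_1), \dots, (a_n, b_n)$ added successively, I claim this linear order on $\Gamma$ extends $\vartriangleleft$. Indeed, suppose $(a_i, b_i) \vartriangleleft (a_j, b_j)$, with the witnessing strict inequality $a_i <_q a_j$ (the case $b_i <_{q'} b_j$ is symmetric). If $j < i$, then $a_j \in x_j$ while $a_i \notin x_j$; but $x_j$ is down-closed in $q$ and $a_i < a_j$, forcing $a_i \in x_j$, a contradiction. Hence $i < j$, so the sequence-order is a linear extension of $\vartriangleleft$. In particular its reflexive transitive closure, $\vartriangleleft^*$, is contained in a total order and is therefore itself a partial order, showing $\varphi$ is secured.

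The main subtlety, such as it is, lies in correctly identifying down-closure of $x_i$ with $\vartriangleleft$-initiality of the set of first $i$ pairs; once this dictionary is fixed, the proof is essentially the standard correspondence between linear extensions of a finite poset and its enumerations by successive minimal elements. No additional combinatorial input is needed.
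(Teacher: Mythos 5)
Your proof is correct. Note that the paper offers no proof of this lemma to compare against: it is explicitly left to the reader (``We invite the reader to check the following lemma\dots''), so the comparison is vacuous. Your argument is a complete and clean verification: in the forward direction the down-closure of each stage $x_i$, $y_i$ follows exactly as you say from the choice of a linear extension of $\vartriangleleft^*$, and in the converse direction the containment of $\vartriangleleft$ in the enumeration order (forced by down-closure of the stages) gives antisymmetry of $\vartriangleleft^*$, which together with its built-in reflexivity and transitivity is precisely the securedness condition.
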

}


Secured bijections can be used to give a very concise description of
the desired states of $S \wedge T$: write $ \secbij{\sigma, \tau}$ for the following set, ordered by inclusion.
$$\secbij{\sigma, \tau} = \{ \varphi \mid  \varphi  : x  \stackrel{ \sigma } {\simeq}   \sigma x =  \tau y  \stackrel  \tau   \simeq   y\text{ is secured, with $x  \in   \mathscr{C} (S), y  \in   \mathscr{C} (T)$} \}.$$

Since secured bijections are by definition equipped with a canonical
order, the elements of $\secbij{ \sigma , \tau }$ can be seen as
ordered sets. Immediate causal links in a secured bijection are
related to those of the underlying orders:
\begin{lem}
  \label{lemma:imc_bij} Let $ \varphi  : q  \simeq  q'$ be a secured bijection. If we
  have $(a, b)  \rightarrowtriangle _{ \varphi } (a', b')$ then either $a \rightarrowtriangle _q a'$ or $b \rightarrowtriangle _{q'} b'$.
\end{lem}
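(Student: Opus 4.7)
The plan is to reduce immediate causality in $\leq_\varphi$ to a single $\vartriangleleft$-step, and then transport immediacy along the bijection $\varphi$.

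First I would show that $(a,b) \imc_\varphi (a',b')$ forces $(a,b) \vartriangleleft (a',b')$ directly (no intermediate $\vartriangleleft$-steps). Since $<_\varphi$ is the transitive closure of $\vartriangleleft$, pick a $\vartriangleleft$-chain from $(a,b)$ to $(a',b')$ of minimal length $n\geq 1$. If $n\geq 2$, consider the first intermediate point $(a_1,b_1)$. We have $(a,b)\leq_\varphi (a_1,b_1)\leq_\varphi (a',b')$, and minimality together with antisymmetry of $\leq_\varphi$ (which holds precisely because $\varphi$ is secured) rules out both equalities $(a,b)=(a_1,b_1)$ and $(a_1,b_1)=(a',b')$ — otherwise one could shorten the chain. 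Hence $(a_1,b_1)$ is strictly between $(a,b)$ and $(a',b')$ in $\leq_\varphi$, contradicting immediate causality. So $n=1$, and by definition of $\vartriangleleft$ this yields either $a<_q a'$ or $b<_{q'}b'$.

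Second, assume without loss of generality that the direct step is $a<_q a'$; I would prove $a\imc_q a'$. If this failed, there would be $a''\in q$ with $a<_q a''<_q a'$. Let $b''=\varphi(a'')$. Then $(a,b)\vartriangleleft (a'',b'')\vartriangleleft (a',b')$ gives $(a,b)\leq_\varphi (a'',b'')\leq_\varphi (a',b')$, and since $a,a'',a'$ are pairwise distinct and $\varphi$ is a bijection, the pair $(a'',b'')$ differs from both $(a,b)$ and $(a',b')$. Hence $(a'',b'')$ is strictly between the endpoints in $\leq_\varphi$, contradicting $(a,b)\imc_\varphi (a',b')$. The case $b<_{q'}b'$ is symmetric.

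The only delicate point is really the first step: one has to be careful that the intermediate element of a minimal chain is genuinely strictly between the endpoints rather than equal to one of them. This is where the securedness hypothesis is used — it is precisely antisymmetry of $\leq_\varphi$ that rules out the degenerate case. Once that is out of the way, the second step is just a straightforward pull-back of a hypothetical intermediate element along $\varphi$.
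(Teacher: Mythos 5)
Your proof is correct and follows essentially the same route as the paper's: reduce the immediate causal link in $\varphi$ to a single $\vartriangleleft$-step, and then, assuming say $a <_q a'$, pull a hypothetical intermediate element $a''$ back into the graph of $\varphi$ as $(a'', \varphi\,a'')$ to contradict immediacy. The only differences are that you spell out the first reduction in detail (the paper simply asserts it), and that your attribution of the degenerate cases to antisymmetry is slightly off: minimality of the $\vartriangleleft$-chain (or irreflexivity of $\vartriangleleft$) already rules them out, securedness being needed only so that $\leq_\varphi$ is a partial order and immediate causality is meaningful at all.
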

\begin{proof}
  From $(a, b)  \rightarrowtriangle _{ \varphi } (a', b')$ we deduce
  $(a, b) \vartriangleleft (a', b')$. Hence either $a <_q a'$ or
  $b <_{q'} b'$. Assume for instance $a <_q a'$. If we do not have
  $a \imc_q a'$ then there exists $a_0  \in  q$ such that $a < a_0 <
  a'$.
  Then $(a_0,  \varphi\,a_0)  \in   \varphi $ and we have
  $(a, b) <_ \varphi  (a_0,  \varphi\,a_0) <_{ \varphi } (a', b')$ contradicting the
  hypothesis.
\end{proof}
\paragraph{Prime secured bijections.}
The order $(\secbij{\sigma, \tau}, \subseteq)$ is (up to isomorphism)
the order of configurations of the event structure we are looking
for. We can now reconstruct an event structure whose order of
configurations matches this order: events are identified as the
\emph{prime} secured bijections, \emph{i.e.} those with a top (\emph{i.e.} greatest)
synchronized event $(s, t)$. In other words there will be an event for each synchronized pair $(s,
t)$ along with a consistent causal history for it, \emph{i.e.} a prime secured bijection with $(s,
t)$ as top element.
In
particular, if there is none (because of a cycle),
it would not appear in the interaction. 
With these ingredients we can form an event structure:

\begin{defi}[Interaction of pre-strategies]
  Let $ \sigma  : S  \rightarrow  A$ and $ \tau  : T  \rightarrow  A$ be maps of event structures. We
  define the event structure $S  \wedge  T$ as follows:
  \begin{itemize}
  \item \emph{Events:} those elements of $ \secbij{\sigma, \tau}$ that have a top event,
  \item \emph{Causality:} inclusion of graphs,
  \item \emph{Consistency:} a finite set $X$ of (graphs of) secured
    bijections is consistent when its union is still (the graph of) a
    secured bijection in $ \secbij{\sigma, \tau}$.
  \end{itemize}
\end{defi}

\noindent We invite the reader to apply this definition on the examples at the beginning of
Section \ref{sec:secbij}, and check that we obtain the event structures announced.

It is routine to check that $S \wedge T$ is an event structure such
that $ \mathscr{C} (S \wedge T)$ is order-isomorphic to
$\secbij{\sigma, \tau}$: 
\begin{lem}\label{lemma:confinter}
  For each configuration $x  \in   \mathscr{C} (S \wedge T)$, {then 
$ \varphi _x = \cup x : x_S  \simeq  x_T  \in  \secbij{ \sigma ,  \tau }$ is a secured bijection.
Moreover, this assignment is such that} 

  $$
  \begin{aligned}
     \varphi _x & \rightarrow  x \\
    (s, t) &\mapsto [(s, t)]_{ \varphi _x}
  \end{aligned}$$
  is an order-isomorphism $ \varphi _x  \cong  x$, where $[(s, t)]_{\varphi_{{x}}} $ denotes the down-closure of $(s, t)$ inside the ordered set
  $ \varphi _x$. Moreover, the mapping $x \mapsto \varphi _x$ defines an order
  isomorphism
  $ \mathscr{C} (S \wedge T) \cong \secbij{ \sigma , \tau }$.
\end{lem}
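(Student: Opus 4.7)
The plan has three parts, matching the three claims. First, to see that $\varphi_x = \cup x$ lies in $\secbij{\sigma,\tau}$, I would simply invoke the definition of the consistency relation of $S \wedge T$: since $x$ is a finite configuration, its elements (each a prime secured bijection) form a consistent set, hence their union is a secured bijection in $\secbij{\sigma,\tau}$; its domain is $x_S = \cup_{p \in x} p_S \in \conf{S}$ and similarly for $x_T$.

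The crux is an auxiliary observation that I would prove first: for every $p \in x$ with top $(s_p,t_p)$, we have $p = [(s_p,t_p)]_{\varphi_x}$. The inclusion $p \subseteq [(s_p,t_p)]_{\varphi_x}$ is immediate from $p \subseteq \varphi_x$ and $p = [(s_p,t_p)]_p$. For the reverse inclusion, I would reason by induction along a $\vartriangleleft$-chain in $\varphi_x$ ending at $(s_p,t_p)$: whenever $(s',t') \vartriangleleft (s'',t'')$ with $(s'',t'') \in p$ and, say, $s' <_S s''$, then $s' \in p_S$ by down-closure of $p_S \in \conf{S}$, and since $p$ is a bijection, its unique partner of $s'$ must coincide with $t'$, the partner of $s'$ in the larger bijection $\varphi_x \supseteq p$; the case $t' <_T t''$ is symmetric. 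Thus the whole chain lives in $p$. Given this observation, the second claim is immediate: any $(s,t) \in \varphi_x$ belongs to some $p \in x$, so $[(s,t)]_p \leq_{S \wedge T} p$ lies in $x$ by down-closure, and the observation identifies it with $[(s,t)]_{\varphi_x}$; the map is order-preserving and reflecting through the equivalence $(s,t) \leq_{\varphi_x} (s',t') \Leftrightarrow [(s,t)]_{\varphi_x} \subseteq [(s',t')]_{\varphi_x}$, and bijective because each event of $x$ has a unique top.

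For the global isomorphism, monotonicity of $x \mapsto \varphi_x$ is immediate, and injectivity follows from the second claim (which recovers $x$ as $\{[(s,t)]_{\varphi_x} : (s,t) \in \varphi_x\}$). For surjectivity, given $\varphi \in \secbij{\sigma,\tau}$, I would set $x_\varphi := \{[(s,t)]_\varphi \mid (s,t) \in \varphi\}$; each $[(s,t)]_\varphi$ is itself in $\secbij{\sigma,\tau}$ (its domain and codomain are down-closed in $S$ and $T$, again using the bijectivity-plus-down-closure trick), hence a valid event of $S \wedge T$; $x_\varphi$ is consistent because $\cup x_\varphi = \varphi$, and down-closed by re-running the chain argument. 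For order reflection, if $\varphi_x \subseteq \varphi_{x'}$ then the same induction shows that $[(s,t)]_{\varphi_x} = [(s,t)]_{\varphi_{x'}}$ for every $(s,t) \in \varphi_x$, giving $x \subseteq x'$. The one genuinely delicate point throughout is the chain induction behind the auxiliary observation; once it is established, all subsequent manipulations are routine bookkeeping.
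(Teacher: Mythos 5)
Your proof is correct and follows essentially the same route as the paper's: the union is secured by the definition of consistency in $S \wedge T$, each element of $x$ is identified with the down-closure of its top inside $\varphi_x$, this gives the local order-isomorphism, and the inverse of the global map sends $\varphi$ to its prime sub-bijections. The only difference is one of detail: the chain induction you isolate as the auxiliary observation (that a prime secured bijection $p \subseteq \varphi_x$ with top $(s,t)$ equals $[(s,t)]_{\varphi_x}$, via down-closure of $p$'s domain and codomain plus uniqueness of partners in a bijection) is precisely the step the paper asserts without proof, so your write-up is a fully detailed version of the same argument.
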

\begin{proof}
  Let $x \in \mathscr{C} (S \wedge T)$. By definition of consistency
  in $S \wedge T$, $\cup x$ is the graph of a secured bijection
  $ \varphi _x \in \secbij{ \sigma , \tau }$. Any element of $x$ is
  a secured bijection with a maximal element $(s, t)$, and hence 
  is $[(s, t)]_{\varphi_x}$.
  Thus, $[(s, t)]_{ \varphi _x} \mapsto (s, t)$ defines an
  order-isomorphism $x \cong \varphi _x$. This yields a map
  $ \mathscr{C} (S \wedge T) \rightarrow \secbij{ \sigma , \tau }$.
  The converse maps a secured bijection $ \varphi $ to the set of
  elements of $S \wedge T$ included in $ \varphi $.
\end{proof}

By local injectivity of $\sigma$ and $\tau$, a secured bijection 
$\varphi : x \simeq y$ is entirely determined by $x$ and $y$. Therefore, $\conf{S \wedge T}$ 
is also order-isomorphic to the set of pairs $(x, y) \in \conf{S} \times \conf{T}$ such that
$\sigma x = \tau y$ and such that the induced bijection between $x$ and $y$ is secured, 
partially ordered by componentwise inclusion -- we will use this description later on in the proofs.

\paragraph{The interaction pullback.}
Events of $S  \wedge  T$ have the form $\varphi$ with a top element
$(s, t)$. The mappings $ \Pi _1 :  \varphi \mapsto s$ and $ \Pi _2 :  \varphi \mapsto t$
induce maps of event structures $S  \wedge  T  \rightarrow  S$ and $S  \wedge  T  \rightarrow  T$ that make
the following diagram commute:
$$\xymatrix@R=10pt@C=10pt{
& S  \wedge  T \ar[dl]_{ \Pi _1}\ar[dr]^{ \Pi _2} \\
S \ar[dr]_{ \sigma } & & T \ar[dl]^{ \tau } \\
& A
}$$

Writing $\pi_i$ for the (set-theoretic) projections, by Lemma \ref{lemma:confinter}, for every $x  \in   \mathscr{C} (S \wedge T)$ we have
$$ \pi _1  \varphi _x =  \Pi _1 x$$ 
as $ \pi _1(s, t) = s =  \Pi _1[(s, t)]_{ \varphi _x}$ and similarly for $ \pi _2$ and $ \Pi _2$.  Those maps furthermore satisfy a
universal property making formal the intuition of a ``generalized
intersection'': $(S \wedge T, \Pi _1, \Pi _2)$ is the \emph{pullback}
of $ \sigma : S \rightarrow A$ and $ \tau : T \rightarrow A$, meaning
that that the above diagram commutes and for each map of event
structures $ \alpha : X \rightarrow S$ and $ \beta : X \rightarrow T$
satisfying

  $$\xymatrix{
    & X \ar@{.>}[d]^{\hspace{-0.42cm} \langle  \alpha ,  \beta \rangle}\ar@/_/[ddl]_{ \alpha } \ar@/^/[ddr]^{ \beta } \\
    & S  \wedge  T \pb{270}\ar[dl]_{ \Pi _1} \ar[dr]^{ \Pi _2} \\
    \ar[dr]_{ \sigma } S & & T \ar[dl]^{ \tau } \\
    & A }$$
  there is a unique map $ \langle  \alpha ,  \beta  \rangle  : X  \rightarrow  S  \wedge  T$ such that
  $ \Pi _1 \circ  \langle  \alpha ,  \beta  \rangle  =  \alpha $ and $ \Pi _2 \circ  \langle  \alpha ,  \beta  \rangle  =  \beta $.

  To construct $ \langle \alpha , \beta \rangle $, we will need the
  following lemma stating 
  the precise sense in which maps of event structures reflect the
  causal order:
\begin{lem} \label{lemma:reflect} 
  Let $f : A  \rightarrow  B$ be a map of event structures and $a, b  \in  A$ such
  that $\{a, b\}$ is consistent. If $f(a)  \leq  f(b)$ then $a  \leq  b$.
\end{lem}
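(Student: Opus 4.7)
The plan is to exhibit a single finite configuration $x$ of $A$ that contains both $a$ and $b$, so that local injectivity (axiom (2) for maps of event structures) can be used to identify $a$ with a witness sitting below $b$. The natural candidate is $x = [a] \cup [b]$.

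First I would check that $x \in \conf{A}$. Finiteness is clear since $[a]$ and $[b]$ are finite by the first axiom of event structures. Down-closure is immediate from the down-closure of $[a]$ and $[b]$. For consistency, I would start from $\{a, b\} \in \Con_A$ and iterate the axiom $X \in \Con, e \in X, e' \leq e \Rightarrow X \cup \{e'\} \in \Con$: first add the predecessors of $b$ one by one to obtain $\{a\} \cup [b] \in \Con$, then add the predecessors of $a$ one by one to obtain $[a] \cup [b] \in \Con$. Finiteness of $[a]$ and $[b]$ ensures this process terminates after finitely many steps.

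Next, since $f$ preserves configurations, $f[b] \in \conf{B}$. It contains $f(b)$ and is down-closed, so from $f(a) \leq f(b)$ we get $f(a) \in f[b]$. Hence there exists some $a' \in [b]$ with $f(a') = f(a)$. Both $a$ and $a'$ belong to the configuration $x = [a] \cup [b]$, and they share the same image under $f$, so local injectivity forces $a = a'$. Since $a' \in [b]$, we conclude $a \leq b$, as desired.

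There is no serious obstacle here: the only delicate point is obtaining a configuration containing both events, and the closure axiom of $\Con$ under adding predecessors is exactly designed for this. The argument uses all three relevant ingredients — preservation of configurations, local injectivity, and the closure of consistency under predecessors — in an essential way.
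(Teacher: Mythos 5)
Your proof is correct and follows essentially the same route as the paper: both use preservation of configurations to find a witness $c \in [b]$ with $f(c) = f(a)$, and then local injectivity inside a common consistent set to conclude $a = c \leq b$. Your version is slightly more explicit than the paper's in exhibiting the configuration $[a] \cup [b]$ (the paper only notes that $\{a,b,c\}$ is consistent, leaving the down-closure implicit), but this is a matter of presentation, not of substance.
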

\begin{proof}
  Since $f$ is a map of event structures, $f[b]$ is down-closed as a
  configuration of $B$. Since $f(a) \leq f(b) \in f[b]$ by hypothesis, it
  follows that $f(a) \in f[b]$ and thus $f(a) = f(c)$ for some
  $c \leq b$.  Since $\{a, b\}$ is consistent so is $\{a, b, c\}$ and
  local injectivity implies $a = c \leq b$ as desired.
\end{proof}

We can now prove that our construction yields a pullback:
\begin{lem}[The interaction is a pullback]\label{lem:pullback}
  Let $ \sigma  : S  \rightarrow  A$ and $ \tau  : T  \rightarrow  A$ be maps of event structures. The
  triple $(S  \wedge  T,  \Pi _1,  \Pi _2)$ is a pullback for $ \sigma $ and $ \tau $.
\end{lem}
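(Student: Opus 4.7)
The plan is to verify each part of the universal property in turn: commutativity of the square, existence of a mediating map $\langle \alpha, \beta \rangle$, and its uniqueness. Commutativity is immediate from the construction: any event of $S \wedge T$ is a secured bijection $\varphi$ with top element $(s,t)$ where $\sigma(s) = \tau(t)$, so $\sigma \circ \Pi_1 = \tau \circ \Pi_2$. That $\Pi_1$ and $\Pi_2$ are bona fide maps of event structures (preserving configurations and locally injective) follows from Lemma \ref{lemma:confinter}, which identifies $\conf{S \wedge T}$ with $\secbij{\sigma, \tau}$.

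For the mediating map, suppose $\alpha : X \to S$ and $\beta : X \to T$ satisfy $\sigma \circ \alpha = \tau \circ \beta$. For each $e \in X$, I would work with its prime configuration $[e]_X$: the images $\alpha[e] \in \conf{S}$ and $\beta[e] \in \conf{T}$ satisfy $\sigma(\alpha[e]) = \tau(\beta[e])$, so local injectivity of $\sigma$ and $\tau$ on these configurations yields a bijection $\varphi_e : \alpha[e] \simeq \beta[e]$ whose top element is $(\alpha(e), \beta(e))$. The main obstacle is to argue $\varphi_e$ is secured. The idea is to transport causality back to $X$: by local injectivity of $\alpha$ and $\beta$ on $[e]_X$, the assignment $e' \mapsto (\alpha(e'), \beta(e'))$ gives a bijection $[e]_X \simeq \varphi_e$. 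If $(a,b) \vartriangleleft (a',b')$ in $\varphi_e$ via preimages $e', e''$, say because $a <_S a'$, then $\{e',e''\} \subseteq [e]_X$ is consistent and Lemma \ref{lemma:reflect} applied to $\alpha$ gives $e' <_X e''$; the case $b <_T b'$ is symmetric. Hence the preorder generated by $\vartriangleleft$ on $\varphi_e$ embeds into $<_X$ restricted to $[e]_X$, which is a partial order, so $\varphi_e$ is secured and therefore an event of $S \wedge T$. Set $\langle \alpha, \beta \rangle(e) = \varphi_e$.

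It remains to verify that $\langle \alpha, \beta \rangle$ is a map of event structures and satisfies the defining equations. The equations $\Pi_1 \circ \langle \alpha, \beta \rangle = \alpha$ and $\Pi_2 \circ \langle \alpha, \beta \rangle = \beta$ hold by construction, since the top element of $\varphi_e$ is $(\alpha(e), \beta(e))$. For preservation of configurations, the same argument as above applied configuration-wise shows that any $x \in \conf{X}$ induces a secured bijection $\alpha x \simeq \beta x$, which by Lemma \ref{lemma:confinter} corresponds to a configuration of $S \wedge T$ and is easily seen to equal $\langle \alpha, \beta \rangle(x)$; local injectivity is inherited from that of $\alpha$ (since $\Pi_1 \circ \langle \alpha, \beta \rangle = \alpha$).

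Finally, for uniqueness, suppose $\gamma : X \to S \wedge T$ satisfies $\Pi_1 \circ \gamma = \alpha$ and $\Pi_2 \circ \gamma = \beta$. Then for each $e \in X$, $\gamma(e)$ is a prime secured bijection whose top is forced to be $(\alpha(e), \beta(e))$. Because in an event structure each event has a unique causal history, $\gamma(e)$ is determined by the configuration $\gamma[e]_X$ containing it; and this configuration, via Lemma \ref{lemma:confinter}, is the unique secured bijection in $\secbij{\sigma,\tau}$ whose first and second projections are $\alpha[e]$ and $\beta[e]$, namely $\varphi_e$. Hence $\gamma(e) = \varphi_e = \langle \alpha, \beta \rangle(e)$.
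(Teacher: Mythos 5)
Your overall strategy follows the paper's (securedness of $\varphi_e$ via Lemma \ref{lemma:reflect}, uniqueness via the action on configurations), but there is a genuine gap in the construction of the mediating map: you claim that the bijection $\varphi_e : \alpha[e] \simeq \beta[e]$ has top element $(\alpha(e),\beta(e))$ and you set $\langle \alpha,\beta\rangle(e) = \varphi_e$. This is false in general, because maps of event structures need not preserve causality: $\alpha[e]$ may contain events that are not below $\alpha(e)$ in $S$ (nor their $\beta$-images below $\beta(e)$ in $T$), in which case $(\alpha(e),\beta(e))$ is not the greatest element of the graph order on $\varphi_e$ --- indeed $\varphi_e$ may have no top element at all, and then it is not an event of $S \wedge T$. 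Concretely, take $A$ with two concurrent events $a_1, a_2$, let $S = T = A$ with $\sigma = \tau = \id_A$, and let $X$ have two events $e_1 \imc e_2$ with $\alpha = \beta$ sending $e_i \mapsto a_i$; these are legitimate maps of event structures even though they do not preserve $\leq$. Then $\varphi_{e_2} = \{(a_1,a_1),(a_2,a_2)\}$ carries the discrete order: it is secured, but it has two maximal elements and no top, so it is not an event of $S \wedge T$. The same conflation infects your uniqueness step, where you assert $\gamma(e) = \varphi_e$: the event $\gamma(e)$ is an element \emph{of} the configuration corresponding to $\varphi_e$ under Lemma \ref{lemma:confinter}, not the whole of $\varphi_e$.

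The repair is precisely the point where the paper is careful: define $\langle\alpha,\beta\rangle(e)$ to be the down-closure $[(\alpha(e),\beta(e))]_{\varphi_e}$ of the pair $(\alpha(e),\beta(e))$ inside the graph order of $\varphi_e$. This is a sub-bijection of your secured $\varphi_e$, hence secured, and it has top $(\alpha(e),\beta(e))$ by construction, so it is an event of $S \wedge T$; in the example above it correctly gives $\{(a_2,a_2)\}$. With this correction the remaining steps --- commutation of the triangles, preservation of configurations via Lemma \ref{lemma:confinter}, and uniqueness by comparing actions on configurations (Lemma \ref{lem:faithful}) --- go through and coincide with the paper's proof.
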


\begin{proof}
  We have already noticed that the inner square commutes.

  \emph{Existence of $ \langle \alpha , \beta \rangle $:} Assume we
  have an event structure $X$ with two maps
  $ \alpha : X \rightarrow S$ and $ \beta : X \rightarrow T$ such that
  $ \sigma \circ \alpha = \tau \circ \beta $. Let $a \in X$. The
  bijection (by local injectivity of $\alpha, \beta$):
\[
\varphi_a = \{(\alpha\,a', \beta\,a')\mid a'\leq_X a\} : \alpha [a] \simeq \beta [a]
\]
  is secured as a consequence
  of Lemma \ref{lemma:reflect}, as a cycle in it would be reflected to
  $X$. 
  Define
  $ \langle \alpha , \beta \rangle (a) = [ (\alpha (a), \beta
  (a))]_{\varphi_a}$
  to be the secured bijection obtained as the down-closure of
  $(\alpha(a), \beta(a))$ inside the canonical order on the graph of
  $\varphi_a$: it has a maximal event by
  construction, and thus is an event of $S \wedge T$.  It is a good exercise to check that 
  this function defines a map of event structures; which makes the two triangles
  commute.

  \emph{Uniqueness of $ \langle \alpha , \beta \rangle $:} Assume we have
  another map $ \psi : X \rightarrow S \wedge T$ making the two
  triangles commute. We will check that $\langle \alpha , \beta \rangle $ and $\psi$ have the same
action on configurations, which will imply (by Lemma \ref{lem:faithful} below) that they are the
same. Let $z  \in   \mathscr{C} (X)$. Its image through $ \psi $ and
  $ \langle  \alpha ,  \beta  \rangle $ are (under the order-isomorphism $ \mathscr{C} (S  \wedge  T)  \cong   \secbij{\sigma, \tau})$
  secured bijections $ \varphi : x \simeq y$ and $ \varphi' : x' \simeq y'$. But by commutation of
the two triangles in the pullback we must have $x = x' = \alpha\,z$ and $y = y' = \beta\,z$, thus
$\varphi = \varphi'$ (as $\varphi$ is uniquely determined from $x, y$ by local injectivity).  
\end{proof}

In the proof of uniqueness, we only compared the maps by their action on
configurations and deduced they were equal on
events. This is justified by the following simple fact, that will be useful later on:
\begin{lem}
  Let $f, g : A  \rightarrow  B$ be parallel maps of event structures such that
  for all configuration $x  \in   \mathscr{C} (A)$ we have $fx = gx$. Then $f = g$.
\label{lem:faithful}
\end{lem}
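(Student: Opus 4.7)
The plan is to reduce the equality of $f$ and $g$ as functions on events to their equality as functions on configurations, by exhibiting each individual event's image as canonically determined by the action on two specific configurations. Concretely, for an arbitrary event $a \in A$, I will compare the two configurations $[a]$ and $[a) = [a] \setminus \{a\}$, and recover $f(a)$ from the difference.

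First I will fix $a \in A$ and note that both $[a]$ and $[a)$ are finite, down-closed, and consistent, hence legitimate elements of $\conf{A}$ (the down-closure axiom yields $[a] \in \conf{A}$; and $[a)$ is a down-closed subset of the consistent set $[a]$, so consistent by the subset axiom of $\Con_A$). By hypothesis we therefore have $f[a] = g[a]$ and $f[a) = g[a)$.

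Next I will show that $f(a)$ is characterized as the unique element of $f[a] \setminus f[a)$. The inclusion $f[a] \setminus f[a) \subseteq \{f(a)\}$ is immediate since $[a] = [a) \cup \{a\}$. For the reverse inclusion I use local injectivity of $f$ on the consistent set $[a]$: if we had $f(a) \in f[a)$, then $f(a) = f(a')$ for some $a' \in [a)$ with $a, a' \in [a] \in \conf{A}$, forcing $a = a'$, contradicting $a \notin [a)$. Hence $f[a] \setminus f[a) = \{f(a)\}$, and symmetrically $g[a] \setminus g[a) = \{g(a)\}$. Combined with the two set equalities above, this gives $\{f(a)\} = \{g(a)\}$, so $f(a) = g(a)$. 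Since $a$ was arbitrary, $f = g$.

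The argument is entirely routine; the only subtle point is invoking local injectivity (clause (2) of the definition of pre-strategy / map of event structures) to ensure $f(a) \notin f[a)$, which in turn relies on the fact that $[a]$ is consistent. There is no real obstacle here.
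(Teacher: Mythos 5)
Your proof is correct and follows essentially the same route as the paper's: compare $f[a]$ with $f[a)$ and extract $f(a)$ as the unique element of the difference, using the hypothesis on configurations. The paper's version is terser (it leaves implicit the local-injectivity step ensuring $f(a)\notin f[a)$, which you rightly spell out), but the argument is identical.
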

\begin{proof}
  Let $a  \in  A$. Write $[a)$ for the configuration
  $[a] \setminus \{a\}$. By hypothesis we have $f[a] = g[a]$ and
  $f[a) = g[a)$ as sets, thus
  $\{f(a)\} = f[a] \setminus f[a) = g[a] \setminus g[a) = \{g(a)\}$
  and hence $f(a) = g(a)$.
\end{proof}
\subsection{Composition of pre-strategies}\label{sec:comp}

Building on our understanding of the interaction of pre-strategies as a pullback,
we can now proceed to define the notion of composition, which is of critical
importance in particular for the application of our games to semantics of programming
languages. For that we need to define what is a pre-strategy $\sigma$ \emph{from game $A$ to
game $B$}, and given also $\tau$ from $B$ to $C$, what is $\tau \odot \sigma$ from $A$
to $C$.
%
%

Following Joyal \cite{JoyalGazette}, we will define a pre-strategy
from $A$ to $B$ to be simply a pre-strategy on the composite game
$A^\perp \parallel B$. Let us show how to compose such pre-strategies.
From
$ \sigma : S \rightarrow A^\perp \parallel B$ and
$ \tau : T \rightarrow B^\perp \parallel C$, we need to build a
pre-strategy $ \tau \odot \sigma $ on the game $A^\perp \parallel C$. Note that from such a notion
of composition we can recover a notion of application when $A$ is the empty event structure $1$.
As usual in game semantics, composition is defined in two steps: firstly, we construct the \emph{interaction}
of the two strategies as an event structure where the two strategies communicate freely. Secondly, the 
internal synchronisation steps are \emph{hidden} away. We will now detail these two steps.

To illustrate them, let
$\mathbb B$ be the game $\xymatrix@C=10pt{\ar@{~} [r]\ttrue^+& \ffalse^+}$ of booleans (two
conflicting positive events). Consider the following pre-strategies $ \sigma $
and $ \tau $ respectively playing on $1^\perp  \parallel  \mathbb B_1$ and $\mathbb B_1^\perp
\parallel  \mathbb B_2$ {(where indices are just there to disambiguate otherwise identical copies of
$\mathbb B$)}:

$$\xymatrix@R=10pt@C=20pt{
 & & & & &  \ffalse_2^+ & & \ttrue_2^+ \\
  \ttrue_1^+ \ar@{~}[rr] & & \ffalse_1^+ & & & \ttrue_1^- \ar@{-|>}[u] \ar@{~}[rr] & &  \ffalse_1^-\ar@{-|>}[u]\\
  & ( \sigma ) & & & & & ( \tau ) }$$

The pre-strategy $ \sigma $ performs a nondeterministic choice: it can either
play true or false. Likewise, $ \tau $ computes the negation of a boolean: when Opponent
plays true or false on $\mathbb B_1$ it answers the negation of that in
$\mathbb B_2$.

\paragraph{Interaction.} Ignoring the polarities, $ \sigma $ and $ \tau $
are maps of event structures $S \rightarrow A \parallel B$ and
$T \rightarrow B \parallel C$. They do not play on the same game
so it is not possible to make them interact directly. To solve this
problem we pad them out with identity maps in order to get pre-strategies on $A  \parallel  B  \parallel 
C$.

Thus we consider
$ \sigma \parallel \text{id}_C : S \parallel C \rightarrow A \parallel
B \parallel C$
and
$\text{id}_A \parallel \tau : A \parallel T \rightarrow A \parallel
B \parallel C$.
Since the identity map on any $A$ accepts all possible behaviour
appearing in $A$, only $ \sigma $ and $ \tau $ give constraints on $A$
and $C$ respectively. In our example, the interaction is:

$$\xymatrix@R=10pt@C=20pt{
\ffalse_2 & & \ttrue_2 \\
\ttrue_1 \ar@{-|>}[u] \ar@{~}[rr] & &  \ffalse_1\ar@{-|>}[u] \\
& ( \sigma   \parallel  \text{id}_C) \wedge (\text{id}_A  \parallel   \tau )
}$$

This interaction
will be written $ \tau   \circledast   \sigma  : T  \circledast  S  \rightarrow  A  \parallel  B  \parallel  C$. 
(Note the change of order from $( \sigma   \parallel  \text{id}_C) \wedge (\text{id}_A  \parallel   \tau )$
to $\tau \circledast \sigma$, which reflects the standard notation for composition. In particular, when $A = C = 1$,
$ \sigma  \wedge  \tau $ is the same as $ \tau   \circledast   \sigma $.)

\paragraph{Hiding.}\label{par:hiding}
 From $ \tau   \circledast   \sigma  : T  \circledast  S  \rightarrow  A  \parallel  B  \parallel  C$ we need to obtain a
map to $A  \parallel  C$. For an event $p\in T  \circledast  S$ we say that it is \textbf{visible} if it maps
to $A$ or $C$, \textbf{invisible} otherwise. Let us write $V$ for the set of visible events of $T  \circledast  S$.

We now obtain the composition by hiding invisible events:
formally, $T  \odot  S = (T  \circledast  S)  \downarrow  V$. The obvious function
$ \tau   \odot   \sigma  : T  \odot  S  \rightarrow  A  \parallel  C$, got as the restriction of $
\tau   \circledast   \sigma${,} defines a map of event structures. Polarities
on $T  \odot  S$ are inherited from those of $A^\perp  \parallel  C$ to make $ \tau   \odot   \sigma $
a pre-strategy on $A^\perp  \parallel  C$.
In our example this yields the pre-strategy on $\mathbb B$ (notice the inheritance of conflict -- the conflict between $\ffalse_2$ and
$\ttrue_2$ becomes minimal after hiding):

$$\xymatrix@R=10pt{\ffalse_2^+ \ar@{~}[rr] & & \ttrue_2^+ \\ &  \tau   \odot   \sigma }$$

We get back the original nondeterministic boolean -- the non-deterministic boolean is
invariant under negation. But in what sense is it the same, exactly?
%

\paragraph{Isomorphisms of pre-strategies.}
They are not equal (set-theoretically) because the underlying sets are not the same, but they are
\emph{isomorphic}:
\begin{defi}[Isomorphism of pre-strategies]
  Let $ \sigma  : S  \rightarrow  A$ and $ \tau  : T  \rightarrow  A$ be two pre-strategies on a common
  game $A$. {An \textbf{isomorphism} between $\sigma$ and $\tau$ is an isomorphism}
  of event structures $ \phi  : S  \cong  T$ commuting with the action on the
  game:

  $$\xymatrix{
    \ar[dr]_{ \sigma }    S \ar@/^/[rr]^{ \phi } & & T \ar[dl]^{ \tau } \\
    & A
  }$$
  In this case, we write $ \phi  :  \sigma   \cong   \tau $ or simply $ \sigma   \cong   \tau $
{when the specific $\phi$ does not matter.}
\end{defi}
Isomorphism is the most precise equivalence that makes
sense on pre-strategies: two isomorphic pre-strategies have the same
intensional behaviour.

Constructing isomorphisms at the level of events can be sometimes cumbersome
especially in the case when the event structures are generated from
an order of configurations as is the case for the interaction (Section
\ref{sec:pb}). Fortunately, order-isomorphisms between configurations
of event structures induce isomorphisms on the event structures ({\it cf.}~\cite{NPW}).
\begin{lem}\label{lemma:eviso}
  Let $A$ and $B$ be event structures. {For every order-isomorphism
  $ \varphi :  \mathscr{C} (A)  \cong   \mathscr{C} (B)$ there is a unique isomorphism of event structures
  $\hat { \varphi } : A  \cong  B$ satisfying $\hat { \varphi }(x) =  \varphi(x)$ for every configuration
  $x  \in   \mathscr{C} (A)$. This induces a bijective correspondence between order-isomorphisms
  $\mathscr{C} (A)  \cong   \mathscr{C} (B)$ and isomorphisms of event structures $A  \cong  B$.}
\end{lem}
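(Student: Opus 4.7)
The plan is to reconstruct events from $\mathscr{C}(A)$ using an order-theoretic characterisation of \emph{prime} configurations. Recall that prime configurations are exactly those of the form $[a]$ for $a \in A$ (with $a$ the unique maximum), and the map $a \mapsto [a]$ is a bijection between $A$ and the prime configurations. The key observation is that primality is expressible purely in $(\mathscr{C}(A), \subseteq)$: a nonempty configuration $x$ is prime iff it covers (from above) a unique configuration, i.e.\ there is exactly one $y \in \mathscr{C}(A)$ with $y \cov x$. Indeed, $y \cov x$ amounts to $y = x \setminus \{e\}$ for some maximal $e \in x$; so uniqueness of $y$ is equivalent to $x$ having a unique maximal element, which (since $x$ is down-closed) means $x = [a]$ for that element.

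Since $\varphi$ is an order-isomorphism, it preserves the covering relation, hence preserves primality. Thus for each $a \in A$ we have $\varphi([a]) = [b]$ for a unique $b \in B$, and we define $\hat{\varphi}(a) := b$. Applying the same construction to $\varphi^{-1}$ yields a set-theoretic inverse, so $\hat{\varphi} : A \to B$ is a bijection. Causality is preserved in both directions: $a \leq_A a'$ iff $[a] \subseteq [a']$ iff $\varphi([a]) \subseteq \varphi([a'])$ iff $[\hat{\varphi}(a)] \subseteq [\hat{\varphi}(a')]$ iff $\hat{\varphi}(a) \leq_B \hat{\varphi}(a')$.

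Next I would show that $\hat{\varphi}$, regarded as a map of sets, agrees with $\varphi$ on every $x \in \mathscr{C}(A)$. Using that $a \in x$ iff $[a] \subseteq x$ (by down-closedness), we compute, for $b \in B$: $b \in \hat{\varphi}\,x$ iff $\hat{\varphi}^{-1}(b) \in x$ iff $\varphi^{-1}([b]) \subseteq x$ iff $[b] \subseteq \varphi(x)$ iff $b \in \varphi(x)$. Hence $\hat{\varphi}\,x = \varphi(x)$ as sets, and in particular $\hat{\varphi}\,x \in \mathscr{C}(B)$. Consistency follows: a finite $X \subseteq A$ is in $\Con_A$ iff $X \subseteq x$ for some finite $x \in \mathscr{C}(A)$, and this property transfers to $\hat{\varphi}(X) \subseteq \hat{\varphi}\,x \in \mathscr{C}(B)$. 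The same argument applied to $\widehat{\varphi^{-1}} = (\hat{\varphi})^{-1}$ shows that $\hat{\varphi}$ is an isomorphism of event structures.

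For uniqueness, suppose $\psi : A \cong B$ also satisfies $\psi(x) = \varphi(x)$ for every $x \in \mathscr{C}(A)$. Then $\psi([a]) = [\hat{\varphi}(a)]$; since $\psi$ preserves $\leq$, it sends the maximum of $[a]$, namely $a$, to the maximum of $[\hat{\varphi}(a)]$, namely $\hat{\varphi}(a)$, so $\psi = \hat{\varphi}$. Finally, sending an isomorphism of event structures to its direct image on configurations is clearly inverse to $\varphi \mapsto \hat{\varphi}$, giving the claimed bijective correspondence. The main subtlety is the order-theoretic identification of primes in the first paragraph; after that everything is essentially bookkeeping.
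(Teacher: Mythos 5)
Your proof is correct, and it takes a genuinely different route from the paper's. You reconstruct $\hat{\varphi}$ from the prime configurations, using the purely order-theoretic characterisation that a nonempty finite configuration is prime iff it has a unique lower cover (equivalently, a unique maximal element), so that the order-isomorphism $\varphi$ maps primes to primes and $\hat{\varphi}(a)$ can be defined by $\varphi([a]) = [\hat{\varphi}(a)]$; the agreement $\hat{\varphi}\,x = \varphi(x)$ on all configurations then falls out of a direct elementwise computation using down-closure, with no induction. This is essentially the reconstruction underlying the representation theorem of \cite{NPW}, which the paper points to. The paper instead defines $\hat{\varphi}(a)$ as the event added in the covering $\varphi\,[a) \cov \varphi\,[a]$ and proves $\hat{\varphi}\,x = \varphi(x)$ by induction on $x$, the key technical step being that $\varphi$ preserves commuting squares of coverings; uniqueness and the bijective correspondence are then dispatched by Lemma \ref{lem:faithful}. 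Your route is shorter and avoids both the induction and the square-preservation lemma; the paper's stays closer to elementary covering manipulations and reuses its Lemma \ref{lem:faithful} machinery. Two points you leave implicit deserve a line each: in the uniqueness argument you use that an isomorphism of event structures preserves $\leq$ --- true, but it needs justification, e.g.\ via Lemma \ref{lemma:reflect} applied to the inverse (noting that $\{a,a'\}$ with $a \leq a'$ and its image are consistent), or you could bypass it entirely by invoking Lemma \ref{lem:faithful} as the paper does; and the transfer of consistency rests on the fact that a finite set is consistent iff it is contained in a finite configuration, which you state correctly but which itself needs the observation that the down-closure of a finite consistent set is a finite configuration, by the axioms of event structures.
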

\begin{proof}
  Since it is an order-isomorphism, $ \varphi $ preserves the covering relation on configurations.
We define $\hat{\varphi}$ through this property: indeed for all $a\in A$ we have $[a) \cov [a]$,
therefore $\varphi\,[a) \cov \varphi\,[a]$; let us write $\hat{\varphi}\,a$ for the event added in
this covering. In order to establish that $\hat{\varphi}$ is a map of event structures whose image of
configurations matches $\varphi$, the key property will be that for all $x \cov x \cup \{a\}$, the event 
added in $\varphi\,x \cov \varphi(x\cup \{a\})$ is indeed $\hat{\varphi}\,a$.

For that, we remark that $\varphi$ preserves commuting squares of coverings of the form:

  $$\xymatrix {
    y_1 \ar@{}[r]|{\cov}^{a_1} & z \\
\ar@{}[u]|{\rotatebox{90}{$\cov$}}^{a_2}
      x \ar@{}[r]|{\cov}^{a_1} & y_2 \ar@{}[u]|{\rotatebox{90}{$\cov$}} ^{a_2}
 }$$
in the sense that their images are squares where parallel arrows correspond to the same event.
 Since $\varphi$ preserves $\cov$, the image of the square as above is:

  $$\xymatrix {
    \varphi\,y_1 \ar@{}[r]|{\cov}^{b'_1} & \varphi\,z \\
\ar@{}[u]|{\rotatebox{90}{$\cov$}}^{b_2}
      \varphi\,x \ar@{}[r]|{\cov}^{b_1} & \varphi\,y_2 \ar@{}[u]|{\rotatebox{90}{$\cov$}} ^{b'_2}
 }$$

If $\varphi\,y_1 = \varphi\,y_2$ then $y_1 = y_2$ so $a_1 = a_2$ and $a_1 \in y_1$, contradicting
$y_1 \longcov{a_1}$. So since $\varphi\,y_1 \neq \varphi\,y_2$ we have $b_2 \neq b_1$, therefore 
  $b_1 = b'_1$ and $b_2 = b'_2$. 

Now, by induction on $x$ we prove that $\hat{ \varphi } x =  \varphi x$. Clearly
$\varphi \emptyset = \hat{\varphi} \emptyset = \emptyset$.
Now take $x \longcov{a} y$, and write $ \varphi x \longcov{b} \varphi
  y$. We have covering diagrams as below:

  $$\xymatrix{
 x \ar@{}[r]|{\longcov a} & y  & &  \varphi x \ar@{}[r]|{\longcov b} &  \varphi y\\
\ar@{}[u]|{\rotatebox{90}{$\cov$}}\vdots & \ar@{}[u]|{\rotatebox{90}{$\cov$}} \vdots \ar@{}[rr]|{\mapsto} & & \ar@{}[u]|{\rotatebox{90}{$\cov$}}\vdots & \ar@{}[u]|{\rotatebox{90}{$\cov$}}\vdots\\
[a) \ar@{}[r]|{\longcov a}\ar@{}[u]|{\rotatebox{90}{$\cov$}} &[a] \ar@{}[u]|{\rotatebox{90}{$\cov$}} & &  \varphi [a) \ar@{}[r]|{\longcov {\hat{ \varphi }a}}\ar@{}[u]|{\rotatebox{90}{$\cov$}} &  \varphi [a] \ar@{}[u]|{\rotatebox{90}{$\cov$}}
}$$
where the left hand side diagram decomposes into commuting squares of coverings where all horizontal
coverings add $a$. Since those are
preserved, it follows that $b = \hat{\varphi}\,a$. Hence $\hat{\varphi}\,y = \hat{\varphi}\,x \cup
\{\hat{\varphi}\,a\} = \varphi\,x \cup \{b\} = \varphi\,y$.

Obviously it follows that $\hat{\varphi}$ preserves configurations. It is also locally injective
since $x$ and $\varphi\,x = \hat{\varphi}\,x$ have the same cardinal (as $\varphi$ preserves
coverings). Thus $\hat{\varphi}$ is a map of event structures. From Lemma \ref{lem:faithful} it
follows that $\hat{\varphi}$ and $\hat{\varphi^{-1}}$ are inverses.

{Uniqueness is obvious by Lemma \ref{lem:faithful} and the bijective correspondence follows.}
\end{proof}

It will follow from the developments of Section \ref{sec:bicategory} that up to this notion of
isomorphism of pre-strategies, composition is associative:

\begin{prop}
Let $\sigma : S \to A^\perp \parallel B, \tau : T \to B^\perp \parallel C$ and $\rho : U \to
C^\perp \parallel D$ be pre-strategies. Then, there is an isomorphism $\alpha_{\sigma, \tau, \rho} :
(U\odot T) \odot S \to U \odot (T\odot S)$ making the following diagram commute:
\[
\xymatrix{
(U\odot T) \odot S
	\ar[rr]^{\alpha_{\sigma, \tau, \rho}}
	\ar[dr]_{(\rho \odot \tau) \odot \sigma}&&
U \odot (T\odot S)
	\ar[dl]^{\rho \odot (\tau \odot \sigma)}\\
&A^\perp \parallel D
}
\]
\end{prop}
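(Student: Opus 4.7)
The plan is to exhibit both $(U\odot T)\odot S$ and $U\odot(T\odot S)$ as two presentations of the same object, obtained by a \emph{ternary interaction} followed by hiding all internal (that is, $B$- or $C$-labelled) events. Concretely, I would first construct $U\circledast T\circledast S\to A\parallel B\parallel C\parallel D$ as the limit in $\ES$ of the three padded maps
\[
\sigma\parallel\id_{C\parallel D},\qquad \id_A\parallel\tau\parallel\id_D,\qquad \id_{A\parallel B}\parallel\rho,
\]
and then let $W$ be the set of events mapping into $A$ or $D$ and form the common candidate $(U\circledast T\circledast S)\proj W$.

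The first step is associativity of interaction itself: $(U\circledast T)\circledast S \cong U\circledast(T\circledast S)$. This is immediate from iterated application of the universal property of pullbacks (Lemma \ref{lem:pullback}), since both sides are limiting cones for the same diagram over $A\parallel B\parallel C\parallel D$; alternatively, by Lemma \ref{lemma:confinter}, configurations of either side biject with triples $(x_S,x_T,x_U)\in\conf{S}\times\conf{T}\times\conf{U}$ glued along $B$ and $C$ whose induced global bijection is secured.

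The second, harder step is to show that hiding commutes with further interaction, yielding $(U\odot T)\odot S \cong (U\circledast T\circledast S)\proj W$, and symmetrically for the right-hand parenthesisation. Since $\sigma$ has no $C$-events to synchronise on, the pullback of $U\odot T = (U\circledast T)\proj(W\cup V_B)$ along $\sigma\parallel\id_D$ produces the same secured bijections as the pullback of the unhidden $U\circledast T$, using the witness correspondence of Section \ref{sec:es} between configurations of a projection and those configurations of the original event structure whose $\subseteq$-maximal events are kept. Combined with the transitivity of projection, $(E\proj V')\proj V = E\proj V$ whenever $V\subseteq V'$, this identifies the iterated hiding with the single hiding of $W$. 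Lemma \ref{lemma:eviso} then lifts the resulting order-isomorphism of configuration posets to an isomorphism of event structures, and commutation with the labelling into $A^\perp\parallel D$ is immediate from the construction.

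The main obstacle is precisely this commutation of hiding with subsequent pullback: causal dependencies between visible events in $U\circledast T\circledast S$ may pass through hidden $C$-events, and one must check that these dependencies are faithfully recovered after $U\odot T$ is re-interacted with $S$. This works because causality in $E\proj V$ is the transitive closure of the causality of $E$ restricted to pairs linked through invisible events, while each invisible event admits a unique minimal causal witness in $E$; once this compatibility is established, the rest is a routine computation on secured bijections, and the naturality of $\alpha_{\sigma,\tau,\rho}$ follows because the whole construction is defined by universal properties.
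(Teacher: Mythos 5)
Your proposal is correct and takes essentially the same approach as the paper's proof in Section \ref{sec:assoc}: associativity of the interaction via the ternary pullback, followed by the key fact that hiding commutes with subsequent interaction — which the paper isolates as the zipping lemma (Lemma \ref{lem:zipping}), and whose proof obligation (securedness of bijections lifted along minimal witnesses, with causal loops unable to stay among hidden synchronisations) you identify and resolve in the same way. The only difference is presentational: the paper transports the interaction-level associator through the partial-total universal property of hiding maps (Proposition \ref{prop:hiding_carac}), whereas you exhibit both bracketings as $(U\circledast T \circledast S)\proj W$ and conclude with Lemma \ref{lemma:eviso}, which is the same underlying argument.
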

\begin{proof}
The isomorphism is constructed in Section \ref{sec:assoc}.
\end{proof}

{To accompany this associative composition, the next section will start by introducing
a \emph{copycat pre-strategy}, that serves as a candidate for an identity. The copycat pre-strategy
\[
\ccc_A : \CCC_A \to A^\perp \parallel A
\]
is an \emph{asynchronous forwarder}: every negative move on one side triggers the corresponding
positive move on the other side.
It is idempotent, but we will see that it is not an identity:
pre-strategies and copycat \emph{do not form a category}. We will define
\emph{strategies} as those pre-strategies for which copycat is an identity, and characterise them
concretely.}

\section{Strategies
}
\label{sec:strategies}
As previously hinted at, pre-strategies currently 
take little account of polarity, and
hence have an unreasonable expressive power: they can for instance constrain
the order in which Opponent plays their 
moves, or prevent 
them
from 
playing at all. {We have encountered just before Section \ref{sec:pb}
a simple example of that: the empty pre-strategy on the game $\ominus$ with just
one negative move. By not acknowledging the $\ominus$, Player denies Opponent the right
to play, even though the game allows it.}

One guiding principle for the notion of strategy is that {they should form a category}, so there
should be a copycat strategy, {neutral for composition with respect to other strategies}. {The
presence of an identity for composition} is of
course key to the application of our setting in denotational semantics, 
which relies on a categorical 
formalisation, but we argue that there is a more
down-to-earth motivation for {it. The \emph{copycat strategy}, to be introduced
formally below, acts as an \emph{asynchronous forwarder}. Accordingly, composing with copycat will eliminate overly \emph{synchronous} behaviour from pre-strategies.  
Examples
include the pre-strategy in the previous paragraph not acknowledging Opponent's move, or a pre-strategy
playing on a game $\oplus_1 \oplus_2$ with two independent positive events, which plays the moves
$\oplus_1 \imc \oplus_2$ in order.
As the moves are independent in the game, the ordering $\oplus_1 \imc \oplus_2$ played by the strategy
will not be respected by an asynchronous environment -- two successive packets sent on the network
might arrive in the other order. These intuitions will be revisited formally via examples after the
definition of the copycat pre-strategy.}

In this section we will define the copycat (pre\-)strategy,
and then characterise the \emph{strategies}: those pre-strategies invariant
under their composition with copycat.
We provide examples of pre-strategies that do not behave
well in presence of latency and give two criteria (\emph{courtesy} and
\emph{receptivity}) that are proved necessary and sufficient for a
pre-strategy to be a strategy (Theorem \ref{thm:main_thm}).

\subsection{Copycat and its action on strategies}
\newcommand{\Click}{Click} 
\newcommand{\Compute}{Done}

%
On $A^\perp \parallel A$, each move of $A$ appears twice (with dual
polarities). The copycat pre-strategy waits for a negative occurrence to be played
and then plays the corresponding positive move. In formal terms, it
has the causality $(1-i, a) \imc (i, a)$ for every positive move $(i, a)$
of $A^\perp \parallel A$. Note that this behaviour corresponds to that of the
usual copycat strategy in game semantics.

For instance, on the game
$\mathbb W = \mathbf{\Click}^-\ \ \mathbf{\Compute}^+$ of an interface
where Player (the program) can signal it has finished a long
computation or Opponent (the user) can click on the screen, the
copycat strategy looks like:
$$\xymatrix@C=0.3cm@R=10pt {
\mathbb W^\perp_1 & & \mathbb W_2 \\
\mathbf{\Click}_1^+ & & \mathbf{\Click}_2^- \ar@{-|>}[ll] \\
\ar@{-|>}[rr]\mathbf{\Compute}_1^- & & \mathbf{\Compute}_2^+ \\
& (\ccc_{\mathbb W})
\save "2,1". "3,1" *[F--]\frm{} \restore
\save "2,3". "3,3" *[F--]\frm{} \restore
}$$

Copycat forwards the negative events from one side to the other:
acting as the program on the right and as the user on the left. Even
if copycat is a pre-strategy from $\mathbb W$ to itself, it does not necessarily 
entail a left-to-right flow of information as can be seen for the
event \textbf{Click}, rather \emph{from negative to positive}. This
general construction yields a pre-strategy playing on
$A^\perp \parallel A$ for any game $A$.

\begin{defi}[Copycat]
  Let $A$ be a game. Define $\CCC_A$ to be the following event structure:
  \begin{itemize}
  \item \emph{Events:} those of $A^\perp  \parallel  A$,
  \item \emph{Causality}: the transitive closure of
$$ \leq _{A^\perp  \parallel  A} \cup \{ \left((1-i, a), (i, a)\right) \mid (i, a)^+  \in  A^\perp  \parallel  A \}$$
\item \emph{Consistency}: $X$ is consistent in $\CCC_A$ iff its
  down-closure $[X] = \{ a  \in  \CCC_A \mid  \exists b  \in  X, a  \leq _{\CCC_A} b\}$ is
  consistent in $A^\perp  \parallel  A$.
  \end{itemize}
\end{defi}

\noindent This makes an event structure and the identity map is a pre-strategy:

\begin{lem}
For any game $A$, $\CCC_A$ is {an esp (with polarities inherited from $A^\perp \parallel A$), and the
identity map written $\ccc_A : \CCC_A  \rightarrow  A^\perp  \parallel  A$ is a pre-strategy, the
\textbf{copycat pre-strategy}.}
\end{lem}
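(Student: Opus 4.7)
The plan is to verify, in turn, the axioms of an event structure for $\CCC_A$, check that polarities inherited from $A^\perp \parallel A$ make it an esp, and then unpack the definition of pre-strategy for $\ccc_A$ regarded as the set-theoretic identity.

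First I would check that the causality relation of $\CCC_A$ is a partial order. Reflexivity is clear and transitivity holds by definition (transitive closure). The key observation for antisymmetry is that both ingredients of the closure respect the projection to $A$: original arcs $\leq_{A^\perp \parallel A}$ respect $\leq_A$ componentwise (since each of $A^\perp$ and $A$ has $\leq_A$ as its order), and the new arcs $(1-i, a) \imc (i, a)$ keep $a$ fixed. So if $e \leq_{\CCC_A} e'$ and we write $e = (i, a)$, $e' = (j, a')$, then $a \leq_A a'$. A cycle in $\CCC_A$ therefore would have to project to a constant $a_0 \in A$, and involve only the two events $(0, a_0)$ and $(1, a_0)$. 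But the single new arc between these goes in a direction determined by $\pol_A(a_0)$ (from the negative copy to the positive one), and there is no back-arc in $\leq_{A^\perp \parallel A}$ (which does not relate events across components), so no cycle exists. For the finite cone axiom, the same projection argument shows $[e]_{\CCC_A} \subseteq \{(j, a') \mid a' \leq_A a\}$, which is finite since $[a]_A$ is.

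Next I would verify the consistency axioms, all of which reduce to the corresponding axioms of $A^\perp \parallel A$ via the definition $X \in \Con_{\CCC_A} \iff [X]_{\CCC_A} \in \Con_{A^\perp \parallel A}$. Non-emptiness is immediate since $[\emptyset] = \emptyset$. The substantive point is singletons: for $e = (i, a)$, I claim $[e]_{\CCC_A}$ is a configuration of $A^\perp \parallel A$ (hence consistent). Indeed its projection to either component is a subset of $[a]_A$ (by the projection argument above), which is consistent in $A$; and each projection is down-closed under $\leq_A$ because if $(j, a') \in [e]_{\CCC_A}$ and $a'' \leq_A a'$ then $(j, a'') \leq_{A^\perp \parallel A} (j, a') \leq_{\CCC_A} e$. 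Subset closure follows because $Y \subseteq X$ implies $[Y] \subseteq [X]$, and predecessor closure because $e' \leq_{\CCC_A} e \in X$ implies $[X \cup \{e'\}] = [X]$. Polarity inheritance then makes $\CCC_A$ an esp by definition.

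Finally, for $\ccc_A$ to be a pre-strategy, local injectivity and polarity preservation are automatic since $\ccc_A$ is the identity on the underlying set and polarities are inherited verbatim. It remains to show $\ccc_A$ preserves configurations: let $x \in \conf{\CCC_A}$. Since $x$ is down-closed in $\CCC_A$ we have $[x]_{\CCC_A} = x$, so $x \in \Con_{A^\perp \parallel A}$ directly from the definition of $\Con_{\CCC_A}$. Down-closure in $A^\perp \parallel A$ follows from $\leq_{A^\perp \parallel A}\, \subseteq\, \leq_{\CCC_A}$ and the down-closure of $x$ in $\CCC_A$. The main obstacle, as often with copycat, is the antisymmetry check described above; everything else is bookkeeping.
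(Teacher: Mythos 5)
Your proof is correct, and it takes a genuinely different route from the paper's. The paper's proof hinges on a closed-form description of the copycat order: $(i,a) \leq_{\CCC_A} (j,a')$ iff either $i = j$ and $a \leq_A a'$, or $i \neq j$ and there is some $a \leq_A a'' \leq_A a'$ with $(i,a'')$ negative (hence $(j,a'')$ positive). One verifies that this relation is transitive and contains the generators, and conversely is contained in $\leq_{\CCC_A}$, so it equals the transitive closure; antisymmetry, finiteness of causal histories and the remaining axioms are then read off from the formula, and the pre-strategy claim is immediate. You never compute the closure. Instead you exploit monotonicity of the projection $(i,a) \mapsto a$ on both kinds of generating arcs: any would-be cycle must project to a single $a_0 \in A$, hence be confined to $\{(0,a_0),(1,a_0)\}$, between which there is exactly one non-reflexive generating arc (from the negative to the positive copy), so antisymmetry holds; the same projection bound gives finiteness of $[e]$; and the consistency axioms and configuration-preservation reduce to $A^\perp \parallel A$ by manipulating down-closures $[X]$. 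Your route is lighter, since it sidesteps the fiddly verification that the candidate formula is transitive, and it spells out the consistency axioms that the paper dismisses as following easily. What the paper's approach buys is the explicit characterisation of $\leq_{\CCC_A}$ itself, a reusable piece of information in the spirit of what is exploited just after (Lemma \ref{lemma:imc_cc} on immediate causality in copycat, and the Scott order characterisation of Lemma \ref{lemma:scott}).
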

\begin{proof}
We observe that for $(i, a), (j, a') \in \CCC_A$, we have $(i,a) \leq_{\CCC_A} (j,a')$ iff:
\begin{itemize}
\item Either, $i = j$ and $a\leq_A a'$,
\item Or, $i\neq j$, and there is $a\leq_A a'' \leq_A a'$ such that 
$\pol_{\CCC_A}((i,a'')) = - $
and (by necessity) $\pol_{\CCC_A}((j,a'')) = +$. 
\end{itemize}

\noindent Indeed, this is a transitive relation that contains the generators for $\leq_{\CCC_A}$ -- dually, two
events related by the relation above are related by $\leq_{\CCC_A}$. The other axioms of event
structures follow easily, and it is trivial that $\ccc_A : \CCC_A \to A^\perp \parallel A$ is a map of
event structures.
\end{proof}

Immediate causal links in copycat have a very specific shape:
\begin{lem}
  \label{lemma:imc_cc} We have that $(i, a)  \rightarrowtriangle _{\CCC_A} (j, a')$ if and
  only if one of the two following conditions is met:
  \begin{enumerate}
  \item Either $i = j$, $a \rightarrowtriangle _A a'$ and either
    $(i, a)$ is positive in $\CCC_A$ or $(j, a')$ is negative in
    $\CCC_A$.
  \item Or $i \neq j$ and $a = a'$ and $(i, a)  \in  \CCC_A$ is negative.
  \end{enumerate}
\end{lem}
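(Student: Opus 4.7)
The plan is to lean directly on the characterization of $\leq_{\CCC_A}$ established in the proof of the previous lemma: $(i,a) \leq_{\CCC_A} (j,a')$ iff either $i = j$ and $a \leq_A a'$, or $i \neq j$ and there exists $a \leq_A a'' \leq_A a'$ with $(i, a'')$ negative in $\CCC_A$ (equivalently, $(j, a'')$ positive). Given this, the lemma reduces to a careful case split; everything else is mechanical.

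For the ``if'' direction I would verify each case separately. In case (1), suppose $(i, a) \leq_{\CCC_A} (k, b) \leq_{\CCC_A} (i, a')$. If $k = i$, then $a \leq_A b \leq_A a'$ and $a \imc_A a'$ force $b \in \{a, a'\}$. If $k \neq i$, the characterization yields $a \leq_A a'' \leq_A b$ with $(i, a'')$ negative and $b \leq_A b'' \leq_A a'$ with $(i, b'')$ positive; the polarity hypothesis, namely $(i, a)$ positive or $(i, a')$ negative, rules out $a'' = a$ and $b'' = a'$ respectively in their corresponding subcases, which then contradicts $a \imc_A a'$ via $a < a'' \leq b \leq b'' < a'$. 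In case (2), an intermediate $(k, b)$ between $(i, a)$ and $(j, a)$ must have $k \in \{i, j\}$, and then $a \leq_A b \leq_A a$ collapses $b$ to $a$.

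For the ``only if'' direction I assume $(i, a) \imc_{\CCC_A} (j, a')$ and again split using the characterization. If $i = j$, then $a <_A a'$, and any strict intermediate $a <_A a_0 <_A a'$ would lift to $(i, a) <_{\CCC_A} (i, a_0) <_{\CCC_A} (i, a')$ in $\CCC_A$, so $a \imc_A a'$; if moreover $(i, a)$ were negative \emph{and} $(j, a')$ were positive, then $(1-i, a')$ would sit strictly between, since $(i, a) \leq (1-i, a')$ is witnessed by $a'' = a$ and $(1-i, a') \leq (i, a')$ by $a'' = a'$, contradicting immediacy; hence the polarity clause of (1) holds. If $i \neq j$, the characterization supplies $a \leq_A a'' \leq_A a'$ with $(i, a'')$ negative, and $(i, a'')$ lies between $(i, a)$ and $(j, a')$ while being distinct from $(j, a')$ (since $i \neq j$); immediacy then forces $(i, a'') = (i, a)$, so $a = a''$ and $(i, a)$ is negative. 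Finally, $(j, a)$ sits between $(i, a)$ and $(j, a')$ and differs from $(i, a)$; if $a \neq a'$ it also differs from $(j, a')$, again contradicting immediacy, so $a = a'$ and case (2) holds.

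The main obstacle is the polarity juggling in the ``if'' direction under case (1) with $k \neq i$: one has to align both characterization witnesses $a''$ and $b''$ on the same component of $A^\perp \parallel A$ and use the endpoint polarity hypothesis to push $a'' > a$ or $b'' < a'$ strictly; elsewhere the proof is a routine case analysis on the component index.
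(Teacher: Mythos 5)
Your proof is correct, but it takes a somewhat different route from the paper's, and the difference is worth spelling out. The paper's ``only if'' direction rests on a single structural fact: $\leq_{\CCC_A}$ is \emph{defined} as the transitive closure of $\imc_{A^\perp \parallel A}$ together with the cross-copy pairs $((i,a),(1-i,a))$ for $(i,a)$ negative, and any immediate causal link of a transitively generated partial order must already belong to the generating relation. This delivers both cases of the statement at once; the only remaining work is to exclude the polarity pattern negative-then-positive when $i=j$, which the paper does by inserting the chain $(i,a) \imc_{\CCC_A} (1-i,a) <_{\CCC_A} (1-i,a') \imc_{\CCC_A} (i,a')$ --- your insertion of $(1-i,a')$ is the same trick with a different intermediate witness. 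The ``if'' direction the paper simply declares clear. You instead argue everything from the explicit characterization of $\leq_{\CCC_A}$ recorded in the proof of the preceding lemma. That costs you extra case analysis in the cross-component case of ``only if'' (you must derive $a''=a$, and then $a=a'$, by exhibiting intermediate events), facts the paper gets for free from the generator property; in exchange, you obtain a genuinely complete proof of the ``if'' direction, which is not as immediate as the paper suggests precisely because of the cross-component comparisons you handle, so your version is more self-contained. One spot to tighten: in case (1) with $k \neq i$, your displayed chain $a < a'' \leq b \leq b'' < a'$ asserts both strict inequalities, whereas in each polarity subcase only one is directly given; the other follows because $(i,a'')$ negative and $(i,b'')$ positive force $a'' \neq b''$, hence $a'' < b''$, so in either subcase one of $a''$, $b''$ lies strictly between $a$ and $a'$, contradicting $a \imc_A a'$. (Also, ``$A^\perp \parallel B$'' in your closing remark should read $A^\perp \parallel A$.)
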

\begin{proof}
  It is clear that both conditions imply
  $(i, a) \rightarrowtriangle _{\CCC_A} (j, a')$.  Conversely, we know
  $ \leq _{\CCC_A}$ is generated by
  $ \rightarrowtriangle _{A^\perp \parallel A} \cup \{((i, a), (1-i,
  a) \mid (i, a)^- \in \CCC_A\}$.
  This means that $(i, a) \rightarrowtriangle (j, a')$ implies either
  $i \neq j$, $a = a'$ and $(i, a)^- \in \CCC_A$ (as desired) or
  $i = j$ and $a \rightarrowtriangle _A a'$. In this case, if $(i, a)$
  is negative and $(j, a')$ is positive, we have
  $(i, a) \rightarrowtriangle _{\CCC_A} (1-i, a) <_{\CCC_A} (1-i, a')
  \rightarrowtriangle _{\CCC_A} (i, a')$
  contradicting $(i, a) \rightarrowtriangle _{\CCC_A} (j, a')$. Hence
  $(i, a)$ is positive.
\end{proof}

Copycat acts on pre-strategies on $A$ via composition:
$ \sigma \mapsto \ccc_A \odot \sigma $. This action adds \emph{latency}
to pre-strategies: whenever the pre-strategy plays a positive move it
has to be forwarded by copycat before being visible. We can now define
strategies:

\begin{defi}[Strategy]\label{def:strategy}
  A \textbf{strategy} on a game $A$ is a pre-strategy $ \sigma  : S  \rightarrow  A$ such
  that $\ccc_A  \odot   \sigma   \cong   \sigma $.
\end{defi}

{This isomorphism is, in general, not unique: in fact, strategies have
in general a non-trivial group of automorphisms. Think, for instance, of the strategy $\sigma : S
\to \mathbb B$ where $S$ has just two conflicting events $\xymatrix@C=10pt{s \ar@{~}[r]&s'}$, both
mapped to $\ttrue$. This strategy $\sigma$ has two automorphisms: the identity and the swap on $S$. 
Likewise, there are two isomorphisms $\ccc_{\mathbb B} \odot \sigma \cong \sigma$.
Despite this, it will follow from our development that if $\ccc_A  \odot   \sigma   \cong   \sigma $,
then there is always
a \emph{canonical} such isomorphism that fits in the bicategorical picture of Section
\ref{sec:bicategory}.}

Let us try to understand this definition through examples.
Consider first the composition $\ccc_{\mathbb W} \odot \ccc_{\mathbb W}$ with
$A = \mathbb W_1, B = \mathbb W_2$ and $C = \mathbb W_3$:

$$\xymatrix@C=0.4cm@R=10pt{
\mathbb W_1^\perp & \mathbb W_2 & \mathbb W_3 \\
\mathbf{\Click}^+_1 &\ar@{-|>}[l] \mathbf{\Click}_2 & \mathbf{\Click}^-_3 \ar@{-|>}[l]\\
\ar@{-|>}[r]\mathbf{\Compute}^-_1 & \ar@{-|>}[r] \mathbf{\Compute}_2 & \mathbf{\Compute}^+_3 & & & \\
& *\txt{$(\ccc_{\mathbb W}  \circledast  \ccc_{\mathbb W})$}
\save "2,1"."3,1" *[F--]\frm{} \restore
\save "2,2"."3,2" *[F--]\frm{} \restore
\save "2,3"."3,3" *[F--]\frm{} \restore
}$$\medskip

Hiding events in $\mathbb W_2$ yields a pre-strategy isomorphic to
$\ccc_{\mathbb W}$. The latency can be observed: immediate
causal links of the form $-  \rightarrowtriangle  +$ get delayed in the interaction to
$-  \rightarrowtriangle   {_\ast}   \rightarrowtriangle  +$ where $ {_\ast} $ denotes an invisible event of the
interaction. After hiding, the effect disappears here but it is not the
case in general. Two situations can appear, calling for two conditions.

\paragraph{Courtesy.} Assume we have the pre-strategy $\sigma$
with event structure $\mathbf{\Compute}^+ \rightarrowtriangle \mathbf{Click}^-$
on $\mathbb W$ that forces the user to wait for the computation to be
over before allowing them to click. Computing the interaction
$\ccc_{\mathbb W} \circledast \sigma $ with $A = {1}$, $B = \mathbb W_1$ and
$C = \mathbb W_2$ yields:
$$\xymatrix{
\mathbb W_1 & \mathbb W_2 \\
\mathbf{\Compute}_1 \ar@{-|>}[d]\ar@{-|>}[r] & \mathbf{\Compute}_2^+ \\
\mathbf{\Click}_1 & \mathbf{Click}_2^- \ar@{-|>}[l]
\save "2,1"."3,1" *[F--]\frm{} \restore
\save "2,2"."3,2" *[F--]\frm{} \restore
}$$

After hiding of $B = \mathbb W_1$,
$\ccc_{\mathbb W} \odot \sigma$ has event structure $\mathbf{\Click}_2^-\ \
\mathbf{\Compute}_2^+$.
There is no causal link anymore because in the interaction the two
events are concurrent. Copycat \emph{will} allow the user to Click
without waiting for $ \sigma $'s constraint: there is no way for
$ \sigma $ to impose this particular order of moves. In other terms the causal link is not stable under the latency added by copycat.

As a consequence, for a pre-strategy to be invariant under the action
of copycat it must not have immediate causal links of the form
$+ \rightarrowtriangle - $ \emph{that were not already present in the
  game}. In our setting, playing a move is similar to sending a packet
whose sender (Player or Opponent) is given by the polarity. This
condition means that unless the protocol (the game) specifies it,
there is no way to force Opponent to wait for a Player message before
sending their message.

Similar reasoning can be made for immediate causal links $-   \rightarrowtriangle - $ (one cannot
control the order in which Opponent sends out messages) and $+   \rightarrowtriangle +$
(latency can change the order in which independent messages arrive).

A pre-strategy respecting these constraints will be called
\emph{courteous}\footnote{This condition was called \emph{innocence} in \cite{lics11}. Courtesy is
preferred here to avoid the misleading collision with innocence in the sense of Hyland and Ong
\cite{DBLP:journals/iandc/HylandO00}.}:

\begin{defi}[Courtesy]
  A pre-strategy $ \sigma  : S  \rightarrow  A$ is \textbf{courteous} when for all $s, s'  \in  S$
  such that $s  \rightarrowtriangle  s'$ and $(\pol(s), \pol(s')) \neq (-, +)$, then
  $ \sigma s  \rightarrowtriangle   \sigma s'$.
\end{defi}

\paragraph{Receptivity.} Consider the game $\mathbb Y = \mathbf{o}^-$
comprising a single negative event, and the two pre-strategies $\sigma$
and $\tau$ on this game, with respective event structures $\emptyset$ (no moves played by $\sigma$)
and $\xymatrix@C=0.3cm { \mathbf{o}^- \ar@{~}[r] & \mathbf{o}^- }$ ($\tau$ can acknowledge the 
unique negative event in two different ways, non-deterministically).

Their respective interactions with copycat on $\mathbb Y$ give (with
$A = {1}, B = \mathbb Y_1$ and $C = \mathbb Y_2$):

$$\xymatrix@R=10pt@C=0pt{
\mathbb Y_1 &&\mathbb Y_2 & \hspace{80pt}& & \mathbb Y_1 && \mathbb Y_2 \\
&&& & & \txt{ $\textbf{o}_1$ }\ar@{~}[dd] \\
&&*[F--]\txt{$\mathbf{o}^-_2 $}& & & && *[F--]\txt{$\mathbf{o}_2^-$}\ar@{-|>}[lld]\ar@{-|>}[llu] \\
&&& & & \mathbf{o_1} \\
&(\ccc_{\mathbb Y}  \circledast   \sigma ) &&& & & (\ccc_{\mathbb Y}  \circledast   \tau )
\save "2,6". "3,6". "4,6" *[F--]\frm{} \restore
}$$

After hiding, only $\mathbf{o}^-_2$ is left in both cases. The problem
with these pre-strategies is that they either duplicate or ignore a
negative event -- yet as we have seen, copycat acknowledges available negative moves
\emph{first} without depending on the pre-strategy's
behaviour. Strategies must therefore have the same behaviour regarding
the negative events as copycat: to accept them as soon as they are
enabled in the current state of the game, and play them \emph{once}. Such
pre-strategies will be called \emph{receptive}:

\begin{defi}[Receptivity]
  A pre-strategy $ \sigma  : S  \rightarrow  A$ is \textbf{receptive} when for each
  configuration $x  \in   \mathscr{C} (S)$ such that $ \sigma x \longcov{a^-}$ there exists a \emph{unique} $s  \in  S$ (necessarily negative) such that $x \longcov s$ and $ \sigma s = a$.
\end{defi}

For readers familiar with game semantics, it might be helpful to note that in standard games models receptivity is always present in one way or another.
It is explicit and named \emph{contingent completeness} in \cite{DBLP:journals/iandc/HylandO00}, but most of the time
it is hard-wired in by asking that strategies contain only plays of even length (Opponent extensions being always present, they bring no additional information).

\subsection{The characterisation of strategies -- overview of the proof}

At this point, the main definitions for the framework are in place. The main element which is
missing, is the fact that for a pre-strategy $\sigma : S \to A$, it is equivalent to be a
\emph{strategy} (in the sense of Definition \ref{def:strategy}), and to be \emph{receptive} and
\emph{courteous} -- which was the main result of \cite{lics11}. The rest of this section is devoted
to proving this result, stated in Theorem \ref{thm:main_thm}. 
In this paper we give a different proof than the one developed in \cite{lics11}. Our new proof is
more high-level and modular, and sets up the stage better for extensions of the framework in future
papers. 
The rest of the section is quite technical, and may be skimmed through in a first reading of the
paper. We start by giving a high-level overview of the proof. 

According to Definition \ref{def:strategy}, $\sigma : S \to A$ is a strategy if $\ccc_A \odot \sigma \cong \sigma$.
By Lemma \ref{lemma:eviso}, that means that there is a order-isomorphism
\[
\conf{S} \cong \conf{\CCC_A \odot S}
\]
commuting with the projection to $A$. In order to characterise the existence of such an isomorphism,
we need to study configurations of $\CCC_A \odot S$ for any pre-strategy $\sigma : S \to A$. This will be done is several steps.

\paragraph{Decomposing interactions.} 
Taking $z \in \conf{\CCC_A \odot S}$, we have its minimal witness $[z] \in \conf{\CCC_A \circledast S}$. By Lemma \ref{lemma:confinter},
$[z]$ corresponds to a secured bijection:
\[
\varphi_{[z]} : x \simeq y
\]
with $x = x_S \parallel x_A \in \conf{S\parallel A}$ and $y = y_{A^\perp} \parallel y_A \in \conf{\CCC_A}$ such that
$\sigma\,x_S = y_{A^\perp}$ and $x_A = y_A$ -- in fact, as remarked below Lemma \ref{lemma:confinter}, by local
injectivity, $\varphi_{[z]}$ (and so $[z]$) is determined by such $x$ and $y$, \emph{i.e.}, by $x_S$ and $y_A$.

We write $\Psi([z]) = (x_S, y_A) \in \conf{S} \times \conf{A}$ for this pair, which satisfies that $x_S \in \conf{S}$ and
$\sigma x_S \parallel y_A \in \conf{\CCC_A}$. Reciprocally (by Lemma \ref{lemma:confinter}) any such pair induces
a configuration of $\CCC_A \circledast S$ provided the corresponding bijection is secured -- but that is always the case,
as we will see; so $\Psi$ is an iso. We will also characterise 
such pairs which, through $\Psi$, correspond
to an interaction whose maximal elements are visible (\emph{i.e.} a minimal witness of a configuration of $\CCC_A \odot S$).
This will yield a complete description of configurations of $\CCC_A \odot S$ in terms of certain pairs of configurations
$(x, y) \in \conf{S} \times \conf{A}$ (step \#1).

For $z \in \conf{\CCC_A \circledast S}$ and $\Psi(z) = (x_S, x_A)$, one may regard $x_A$ as a not completely updated 
version of $\sigma\,x_S$: some negative events of $x_A$ may not have made their way to $\sigma\,x_S$, and reciprocally.

\begin{exa}\label{ex:inter}
Consider the pre-strategy $\sigma$ playing on $\mathbb W_1 \parallel \mathbb W_2$, with event structure $\mathbf \Compute^+_1 \imc \mathbf \Compute_2^+$.
The following diagram represents an interaction $z\in \conf{\CCC_A\circledast S}$ of $\sigma$ with copycat.

\[
\xymatrix@R=10pt{
(\mathbb W_1	\ar@{}[r]|\parallel&
\mathbb W_2)&
(\mathbb W_1	\ar@{}[r]|\parallel&
\mathbb W_2)\\
\mathbf \Compute_1
	\ar@{-|>}[dr]& \text{\phantom{\Compute}} & \mathbf \Click_1^-&
\mathbf \Compute_2^+\\
\text{\phantom{$\Compute_2$}}&\mathbf \Compute_2
	\ar@{-|>}[rru]  & \text{\phantom{$\Compute_2$}} & \text{\phantom{$\Compute_2$}} &
\save "2,1" . "3,1" *[F--]\frm{} \restore
\save "2,2" . "3,2" *[F--]\frm{} \restore
\save "2,3" . "3,3" *[F--]\frm{} \restore
\save "2,4" . "3,4" *[F--]\frm{} \restore
}
\]\medskip

\noindent Here, we have $\Psi(z) = (\{\mathbf \Compute_1^+, \mathbf \Compute_2^+\}, \{\mathbf \Click_1^-, \mathbf \Compute_2^+\})$. 
\end{exa}

In the example above, we observe two phenomena: the event $\mathbf \Click_1^-$ is played on the right hand side but not forwarded to
the left hand side, and the event $\mathbf \Compute_1^+$ is played on the left hand side but not forwarded to 
the right hand side. In general, with $\Psi(z) = (x_S, x_A)$, the constraint that $\sigma\,x_S \parallel x_A \in \conf{\CCC_A}$ means 
that $\sigma\,x_S$ has \emph{less negative events} and \emph{more positive events} than $x_A$,
\emph{i.e.}
\[
x_A \supseteq^- x_A \cap (\sigma\,x_S) \subseteq^+ \sigma\,x_S
\]
This relation $x \supseteq^- \subseteq^+ y$ is in fact a partial order on $\conf{A}$ called the
\emph{Scott order} \cite{DBLP:conf/fossacs/Winskel13}, {and written $\sqsubseteq_A$. It} will yield (step \#2)
a characterisation of configurations of copycat as pairs $(x_S, x_A) \in \conf{S}\times \conf{A}$
such that $x_A \sqsubseteq_A \sigma\,x_S$.

To summarise, after steps \#1 and \#2, we will have achieved an equivalent description of
interactions $z \in \conf{\CCC_A \circledast S}$ as the data of $(x_S, x_A) \in \conf{S}\times
\conf{A}$ such that $x_A \sqsubseteq_A \sigma\,x_S$, \emph{i.e.} as diagrams:
\[
\xymatrix{
&x_S    \ar@{|->}[d]^\sigma\\
x_A     \ar@{}[r]|{\sqsubseteq_A}& 
\sigma\,x_S
}
\]
whose projection to the game via $\ccc_A \circledast \sigma : \CCC_A \circledast S \to A\parallel A$ is
$\sigma\,x_S \parallel x_A$, where only $x_A$ will be visible after hiding.
We now try to produce an isomorphism between configurations of $\CCC_A \circledast S$ that are minimal witnesses of
configurations of $\CCC_A \odot S$ (those whose maximal events are visible), and configurations of $S$. We will build
transformations of configurations in the two directions.

\paragraph{The isomorphism.} 
Constructing the left-to-right part of the isomorphism $\CCC_A\odot S \cong S$, we need to associate
to any representation of an interaction $(x_S, x_A) \in \conf{S} \times \conf{A}$ as above, 
some $x'_S \in \conf{S}$ mapping to $x_A$ via $\sigma$. Diagrammatically:
\[
\xymatrix{
&x_S	\ar@{|->}[d]^\sigma\\
x_A	\ar@{}[r]|{\sqsubseteq_A}& 
\sigma\,x_S
}
~~~~~~~~~~\implies~~~~~~~~~~
\xymatrix{
\exists x'_S 	\ar@{|->}[d]^\sigma&x_S    \ar[d]^\sigma\\
x_A     \ar@{}[r]|{\sqsubseteq_A}& 
\sigma\,x_S
}
\]
In fact, it will turn out that $x'_S \sqsubseteq_S x_S$, and (for the correspondence to be an iso) that its
choice is unique. In other words, we will extract $x'_S$ by proving that strategies are
\emph{discrete fibrations}, as in Definition \ref{def:disfib} (step \#3).

We now focus on the right-to-left part of the construction. From $x\in \conf{S}$, we need to provide some configuration
of $\CCC_A \odot S$; so we need to provide a witness in $\conf{\CCC_A \circledast S}$. As we have seen,
via $\Psi$ we are looking for a pair $(x_S, x_A)$ such that $x_A \sqsubseteq_A \sigma\,x_S$. 
Note that $x_A$ is 
determined 
by the requirement that $\sigma\,x = x_A$. From that it seems that the pair $(x, x_A)$ does
the trick: we do indeed have $z = \Psi^{-1}(x, x_A) \in \conf{\CCC_A \circledast S}$ -- and restricting it to its
\emph{visible} events yields the desired configuration of $\CCC_A \odot S$. However, it will be useful in proving the isomorphism
to have the \emph{minimal} interaction -- the minimal witness -- corresponding to this configuration of the composition through
hiding. The interaction $\Psi^{-1}(x, x_A)$ is not always minimal:

\begin{exa}
Consider $\sigma : S \to \mathbb W$ with $S$ comprising only one event $s$ mapped to $\mathbf \Click_1^-$. Following the
paragraph above, its configuration $\{s\}$ leads to an interaction with copycat corresponding to
$(\{s\}, \{\mathbf \Click_1^-\})$, represented as:
\[
\xymatrix@R=0pt{
\mathbb W
	\ar@{}[r]|\parallel&
\mathbb W\\
\mathbf \Click_1&
\mathbf \Click_1^-
	\ar@{-|>}[l]
}
\]
Disposing of the left hand side $\mathbf \Click_1$ yields a \emph{smaller} interaction witnessing the same
configuration of the composition, as it is maximal and not visible.
\end{exa}

In fact, for $x\in \conf{S}$ there is a \emph{unique} $x^* \subseteq x$ such that $(x^*, \sigma\,x)$ yields
the same configuration of the composition as $(x, \sigma\,x)$, and such that the maximal events of the
represented interaction are all visible. As we will see $x^*$ is obtained from $x$ as above,
by removing maximal negative events (step \#4). From this uniqueness property and the discrete fibration
property, it follows that these constructions are inverses of each other.

\paragraph{Necessity.} From the above, we know that strategies, as discrete fibrations, compose well with
copycat. It remains to show the converse: that strategies which compose well with copycat are discrete fibrations.
In other words, we need to show that strategies of the form $\ccc_A \odot \sigma$ are always discrete fibrations.
That will be a direct verification, once we have characterised the Scott order on $\CCC_A \odot S$ (step \#5).

\subsection{Proof of the characterisation of strategies}

Now, we detail and prove all the steps mentioned above.

\subsubsection{Step \#1: Composition witnesses as pairs}

We start by showing that there are no possible causal loops in an interaction with copycat, so that such 
interactions are entirely characterised by matching pairs of configurations. In fact we prove a slight generalisation. 

\begin{lem}[Deadlock-free lemma]\label{lemma:deadlock}
  Let $\tau : T \rightarrow A^\perp \parallel B$ be a pre-strategy such
  that if $t \leq t'$ and both $t$ and $t'$ are sent by $ \tau $ to the component
  $A^\perp$, then $ \tau t \leq \tau t'$. Then, given a pre-strategy
  $ \sigma : S \rightarrow A$, and configurations $x$ of $S$ and $y$ of
  $T$ with 
$ \sigma x \parallel z= \tau y$ for some configuration $z$ of $B$
, the induced bijection
$x \parallel z \simeq y$
 is secured.

  As a consequence, we have an order isomorphism:
  $$ \mathscr{C} ( T   \circledast S )  \cong  \{ (x, y)  \in   \mathscr{C} (S)  \times   \mathscr{C} (T) \mid  
\sigma x \parallel z =  \tau y
\hbox{ for some } z \hbox{ in }  \mathscr{C}(B)
 \}$$
\end{lem}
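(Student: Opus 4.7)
The plan is to prove the securedness of the bijection first; the order-isomorphism then follows with little extra work. For the latter, Lemma \ref{lemma:confinter} already identifies $\conf{T \circledast S}$ with the order of matching secured bijections (viewing $T \circledast S$ as the pullback of $\sigma \parallel \id_B : S \parallel B \to A \parallel B$ and $\tau : T \to A \parallel B$). Once securedness is automatic, local injectivity of $\tau$ reduces the data to the pair $(x, y)$, because $z$ is forced as the $B$-component of $\tau y$ via the matching condition $\sigma x \parallel z = \tau y$.

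The main work is showing that for any $x, y, z$ with $\sigma x \parallel z = \tau y$, the induced bijection $\varphi : x \parallel z \simeq y$ is secured. I would argue by contradiction: assume the transitive closure of $\vartriangleleft$ has a cycle, take one of minimal length. Transitivity of $<_T$ and $<_{S\parallel B}$ merges any two consecutive same-type steps into one, so minimality forces the cycle to alternate between $\vartriangleleft_T$-steps (using $<_T$) and $\vartriangleleft_S$-steps (using $<_{S\parallel B}$). Since the order of $S \parallel B$ is disjoint, each $\vartriangleleft_S$-step is either Case A (both first components in $S$, so both $t$'s are on the $A^\perp$-side) or Case B (both in $B$, so both $t$'s are on the $B$-side). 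A Case B step yields $\tau t_i <_B \tau t_{i+1}$, and Lemma \ref{lemma:reflect} applied to $\tau$ (with consistency in $y$) promotes this to $t_i <_T t_{i+1}$. Hence every step except Case A yields a genuine $<_T$ relation.

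If no Case A step appears, concatenating gives a $<_T$-cycle in $T$, impossible. Otherwise, let $i_1 < i_2 < \dots < i_m$ index the Case A steps cyclically. Between consecutive checkpoints the intermediate steps are all $\vartriangleleft_T$ or Case B, so by transitivity $t_{i_j+1} \leq_T t_{i_{j+1}}$. Crucially, both endpoints are on the $A^\perp$-side, because they are the $T$-partners of $S$-typed first components of adjacent Case A steps. The hypothesis on $\tau$ then applies to give $\sigma s_{i_j+1} = \tau t_{i_j+1} \leq_A \tau t_{i_{j+1}} = \sigma s_{i_{j+1}}$, and Lemma \ref{lemma:reflect} applied to $\sigma$ (with consistency in $x$) yields $s_{i_j+1} \leq_S s_{i_{j+1}}$. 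Composed with the Case A step $s_{i_j} <_S s_{i_j+1}$, this produces $s_{i_j} <_S s_{i_{j+1}}$. Chaining around the cycle gives $s_{i_1} <_S s_{i_2} <_S \dots <_S s_{i_m} <_S s_{i_1}$, a contradiction in $S$ (the case $m=1$, where the chain wraps around to give $s_{i_1} < s_{i_1}$, must be handled together with $m \geq 2$).

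The main obstacle is the bookkeeping around the alternating cycle: verifying that consecutive Case A steps are ruled out by minimality, that the hypothesis on $\tau$ fires precisely at the Case A checkpoints (where both $t$-endpoints of the intermediate $T$-chain land on $A^\perp$), and that the corner cases (one Case A step, zero Case A steps) are subsumed. Everything else—merging same-type consecutive steps, using Lemma \ref{lemma:reflect} on both $\tau$ and $\sigma$, and the final routine deduction of the order-isomorphism from Lemma \ref{lemma:confinter}—is mechanical.
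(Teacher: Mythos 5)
Your proposal is correct and follows essentially the same route as the paper's proof: a minimal alternating cycle, promotion of $B$-typed steps to $T$-steps via Lemma \ref{lemma:reflect} applied to $\tau$, use of the hypothesis on $\tau$ together with Lemma \ref{lemma:reflect} applied to $\sigma$ to transfer back into $S$, and a resulting cycle in $S$; the order-isomorphism then follows from Lemma \ref{lemma:confinter} exactly as you describe. The only difference is bookkeeping: the paper iteratively deletes the $B$-typed steps from the cycle before converting the remaining $T$-steps one at a time, whereas you convert Case B steps in place and apply the hypothesis on $\tau$ to composite $T$-chains between Case A checkpoints — the two organizations are interchangeable.
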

\proof
  Assume that the bijection is not secured. Without loss of generality,
  there is a causal loop of the form
  $(v_1, t_1) \vartriangleleft \ldots \vartriangleleft (v_{2n},
  t_{2n})$
  such that $t_{2i} < t_{2i+1}$ and $v_{2i+1} < v_{2i+2}$
  and $t_{2n} < t_1$. Note that $v_i  \in  S  \parallel  B$ for every $i$.

  Assume that $v_{2i+1} \in B$. Then $v_{2i+2} \in B$ and we have that
  $\tau(t_{2i+1}) = v_{2i+1}  \leq  v_{2i+2} = \tau(t_{2i+2})$.  Hence by
  Lemma \ref{lemma:reflect}, it follows that $t_{2i+1}  \leq 
  t_{2i+2}$.  If the only
  two steps of the causal loop were $(v_{2i+1}, t_{2i+1})$ and $(v_{2i}, t_{2i})$, we
  have a loop in $T$ and a contradiction. Otherwise, we can remove
  the steps $2i+1$ and $2i+2$ and keep a causal loop. Removing them,
  if there is a loop of length one remaining, then we have a direct contradiction
  ({\it i.e.}~$t_1 < t_1$). Otherwise without loss of generality we can
  assume $v_i \in S$ for every $i$. In this case, by hypothesis on
  $ \tau $ we have that $t_{2i} < t_{2i+1}$ implies that
  $\sigma v_{2i} = \tau t_{2i} < \tau t_{2i+1} = \sigma v_{2i+1}$.
  By Lemma \ref{lemma:reflect} again, it follows
  that $v_1 < \ldots < v_1$ -- a contradiction.

  This establishes that the bijection induced by any pair of
  synchronized configurations $(w, y)$ is secured and thus is a
  configuration of the interaction. We conclude with the sequence of order-isos:
  \begin{align*}
 \mathscr{C} (T \circledast S) & \cong  \{  \varphi :  w \simeq  y \text{ secured} \mid \\
 & \qquad \qquad w  \in   \mathscr{C} (S\parallel B), y  \in   \mathscr{C} (T)\text{ such that }\tau y = ( \sigma   \parallel  B)\,w \} \\
& \cong  \{  \varphi : w  \simeq  y  \mid w  \in   \mathscr{C} (S\parallel B), y  \in   \mathscr{C} (T)\text{ such that } \tau y = ( \sigma   \parallel  B)\,w \} \\ 
& \cong  \{ (x \parallel z, y)  \in   \mathscr{C} (S\parallel B)  \times   \mathscr{C} (T) \mid  \sigma x \parallel z =  \tau y \} \\
& \cong  \{ (x, y)  \in   \mathscr{C} (S)  \times   \mathscr{C} (T) \mid  
\sigma x \parallel z =  \tau y
\hbox{ for some } z \hbox{ in }  \mathscr{C}(B)\rlap{\hbox to 69 pt{\hfill\qEd}}
\}
  \end{align*}\medskip

\noindent Let $ \sigma : S \rightarrow A$ be pre-strategy. 
The previous lemma, instantiated with $ \tau  = \ccc_A$, gives an order-isomorphism:
$$
\begin{aligned}
 \Psi _ \sigma  :  \mathscr{C} (\CCC_A  \circledast   S ) & \cong  \{ (x, y_1  \parallel  y_2)  \in   \mathscr{C} (S)  \times   \mathscr{C} (\CCC_A) \mid  \sigma x = y_1 \} \\
& \cong \{ (x, y)  \in   \mathscr{C} (S)  \times   \mathscr{C} (A) \mid  \sigma x  \parallel  y  \in   \mathscr{C} (\CCC_A) \}
\end{aligned}$$

Every such pair represents an interaction, which gives through hiding a configuration of $\CCC_A \odot S$. However, many
interactions correspond to the same configuration of the composition. In fact, as we
have seen in Section \ref{sec:comp}, configurations of $\CCC_A \odot S$ bijectively correspond
to interactions in $\CCC_A \circledast S$ whose maximal events are visible. We now characterise them.

\begin{lem}\label{lemma:negative_max}
  Let $ \varphi : x \parallel  y  \simeq   \sigma x  \parallel  y$ be a secured bijection corresponding to a
  configuration of $\CCC_A  \circledast  S$. The following are equivalent:
  \begin{enumerate}[label=(\roman*)]
  \item [(i)] All maximal events of $ \varphi $ are visible
  \item [(ii)] Every maximal event $s$ of $x$ is positive and $ \sigma s  \in  y$.
  \end{enumerate}
\noindent Moreover, in this case, if $ \sigma $ is courteous, we have $ \sigma x \subseteq^- y$.
\end{lem}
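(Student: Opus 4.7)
The plan is to analyse the canonical order on $\varphi$ using Lemma \ref{lemma:imc_bij} to describe successors pair-by-pair, and Lemma \ref{lemma:imc_cc} to exploit copycat's specific causal shape. Under the bijection $\varphi : x \parallel y \simeq \sigma x \parallel y$, the invisible events correspond to pairs $(s, (0, \sigma s))$ with $s \in x$, while the visible ones correspond to $((1, a), (1, a))$ with $a \in y$. So the equivalence (i) $\Leftrightarrow$ (ii) reduces to characterising when a pair $(s, (0, \sigma s))$ can be maximal in $\varphi$.

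For (i) $\Rightarrow$ (ii), I fix $s$ maximal in $x$ and enumerate the possible successors of $(s, (0, \sigma s))$. By Lemma \ref{lemma:imc_bij} they must come from an immediate successor of $s$ in $x \parallel y$ (ruled out since $s$ is maximal in $x$ and the $y$-part is disconnected from $S$), or from an immediate successor of $(0, \sigma s)$ in $\sigma x \parallel y$. Lemma \ref{lemma:imc_cc} identifies the latter as either some $(0, a')$ with $\sigma s \imc a'$, or — when $(0, \sigma s)$ is negative, equivalently $s$ positive — the crossover $(1, \sigma s)$. A successor $(0, a') \in \sigma x$ would come from some $s'' \in x$ with $\sigma s < \sigma s''$, and Lemma \ref{lemma:reflect} (applicable since $\{s, s''\}$ is consistent) yields $s < s''$, contradicting maximality. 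Hence the only way for $(s, (0, \sigma s))$ to fail to be maximal is via $(1, \sigma s) \in \sigma x \parallel y$, which requires both $s$ positive and $\sigma s \in y$. Conversely for (ii) $\Rightarrow$ (i): given $(u,v)$ maximal in $\varphi$, if $u = s \in x$ then a left-projection argument forces $s$ maximal in $x$, so by (ii) we get $s$ positive and $\sigma s \in y$, whence $((1, \sigma s), (1, \sigma s)) \in \varphi$ sits strictly above $(s, (0, \sigma s))$ by case 2 of Lemma \ref{lemma:imc_cc} — contradiction. Thus $u \in y$ and the event is visible.

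For the moreover clause, assuming courtesy, I prove $\sigma x \subseteq y$ by picking $s$ maximal in $\{e \in x : \sigma e \notin y\}$ and contradicting in three cases: (a) if $s$ is negative, down-closure of $\sigma x \parallel y$ along the copycat edge $(1, \sigma s) \imc (0, \sigma s)$ forces $\sigma s \in y$; (b) if $s$ is maximal in $x$, condition (ii) directly gives $\sigma s \in y$; (c) otherwise $s$ is positive with some $s \imc s'$ in $x$, the polarity pattern is not $- \imc +$, so courtesy yields $\sigma s \imc \sigma s'$, maximality of $s$ in the bad set gives $\sigma s' \in y$, and down-closure of $y$ in $A$ concludes $\sigma s \in y$. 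Negativity of $y \setminus \sigma x$ follows symmetrically: a hypothetical positive $a \in y \setminus \sigma x$ would trigger the copycat edge $(0, a) \imc (1, a)$ and force $a \in \sigma x$. The main technical subtlety throughout is juggling polarities when invoking Lemma \ref{lemma:imc_cc}, and recognising that courtesy is exactly what bridges the gap between immediate causality in $S$ and in $A$ in case (c), where $\sigma$ is not a priori order-preserving.
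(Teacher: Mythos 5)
Your proof is correct and follows essentially the same route as the paper's: both directions of the equivalence rest on the identical case analysis of immediate successors of an invisible pair $((0,s),(0,\sigma s))$ via Lemma~\ref{lemma:imc_bij} and Lemma~\ref{lemma:imc_cc}, with Lemma~\ref{lemma:reflect} ruling out the successor inside $\sigma x$, exactly as in the paper. The only (harmless) variations are in the \emph{moreover} clause, where you reach the contradiction from a maximal element of the bad set $\{e \in x \mid \sigma e \notin y\}$ rather than, as the paper does, arguing directly that each maximal event of $\sigma x$ lies in $y$ --- the courtesy and copycat-edge steps are the same --- and where you explicitly verify the negativity of $y \setminus \sigma x$, a point the paper leaves implicit (it follows from $\sigma x \parallel y \in \conf{\CCC_A}$ as in Lemma~\ref{lemma:scott}).
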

\begin{proof}\hfill
  \noindent(i) $ \Rightarrow $ (ii).\  Let $s \in x$ be a maximal
    event. The event $c = ((0, s), (0, \sigma s))$ is not visible in
    $ \varphi $.  Hence it is not maximal: there exists
    $c' \in \varphi $ such that
    $c \rightarrowtriangle _{ \varphi } c'$. By Lemma
    \ref{lemma:imc_bij}, there are two cases:
    \begin{itemize}
    \item Either $ \pi _1c  \rightarrowtriangle _{S  \parallel  A}  \pi _1c'$, {\it i.e.}~$c' = ((0, s'), (0,  \sigma s'))$ and $s  \rightarrowtriangle _{x} s'$: this is absurd as
      $s$ is maximal in $x$.
    \item Or $ \pi _2c \rightarrowtriangle _{\CCC_A} \pi _2 c'$: by
      Lemma \ref{lemma:imc_cc}, there are two possibilities. The first one is that
      $c' = ((0, s'), (0, \sigma s'))$: absurd, as it would entail
      $ \sigma s \rightarrowtriangle \sigma s'$ and $s < s'$ by
      Lemma \ref{lemma:reflect} contradicting maximality. The second one is that
      $c' = ((1, \sigma s), (1, \sigma s))$.

      This means that $(1,  \sigma s)$ is positive in $\CCC_A$, {\it i.e.}~$s$ is positive,
      and moreover $(1,  \sigma s)  \in   \sigma x  \parallel  y$ so $ \sigma s  \in  y$.
    \end{itemize}
 
  \noindent(ii) $ \Rightarrow $ (i).\ Let $c$ be a maximal event of $ \varphi $ and assume
    it is not visible. It is then of the form
    $c = ((0, s), (0,  \sigma s))$. If $s  \rightarrowtriangle _x s'$ then
    $c  <_{ \varphi } ((0, s'), (0,  \sigma s'))$ which is absurd so $s$ must be
    maximal in $x$. By assumption $s$ is positive and $ \sigma s  \in  y$. Then
    we have $(0,  \sigma s)  \rightarrowtriangle _{\CCC_A} (1,  \sigma s)$ so
    $c  <_{ \varphi } ((1,  \sigma s), (1,  \sigma s))$ which contradicts the maximality of
    $c$.
  
  Finally, assume $ \sigma $ is courteous. We prove that maximal events of
  $ \sigma x$ are included in $y$. Take $ \sigma s \in \sigma x$ a
  maximal event. If $s$ is negative then $(0,\sigma\,s)$ is positive in $A^\perp \parallel A$.
  Therefore we have $(1, \sigma s) \leq_{\CCC_A} (0, \sigma s)$.
  Since $\sigma x \parallel y \in \conf{\CCC_A}$, 
  we are done. Otherwise, if $s$ is positive it has to be maximal in
  $x$: indeed if we had $s^+ \rightarrowtriangle _x s'$, by courtesy
  $ \sigma s \rightarrowtriangle _{ \sigma x} \sigma s'$ would
  follow contradicting the maximality of $ \sigma s$. Then we can
  conclude by assumption: $ \sigma s \in y$ as desired.
\end{proof}

Summarizing step \#1, we now know that configurations of $\CCC_A \odot S$ correspond, 
in an order-preserving and order-reflecting way, to pairs of configurations $(x, y) \in \conf{S}\times \conf{A}$,
such that $\sigma\,x \parallel y \in \conf{\CCC_A}$, and such that the maximal events of $x$ are positive and also
appear in $y$.

Now, we study the requirement that $\sigma\,x \parallel y \in \conf{\CCC_A}$.

\subsubsection{Step \#2: The Scott order}

As observed before, for $x, y \in \conf{A}$, $y \parallel x \in \conf{\CCC_A}$ whenever $y$ has more positive events
and less negative events than $x$. More precisely:

\begin{lem}[Scott order]\label{lemma:scott}
  Let $x, y  \in   \mathscr{C} (A)$. The following are equivalent:
  \begin{enumerate}[label=(\roman*)]
  \item [(i)] $y  \parallel  x  \in   \mathscr{C} (\CCC_A)$
  \item [(ii)] $x \supseteq^- (x \cap y) \subseteq^+ y$ (where
    $x \subseteq^+ y$ means that $x \subseteq y$ and
    $\text{pol}(y\setminus x) \subseteq \{+\}$ and similarly for $x \supseteq^- y$)
  \item [(iii)] there exists $z  \in   \mathscr{C} (A)$ such that
    $x \supseteq^- z \subseteq^+ y$.
  \end{enumerate}
  In this case we write $x \sqsubseteq_A y$: this is an order
  called \textbf{the Scott order} of $A$.
\end{lem}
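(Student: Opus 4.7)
The plan is to prove the three-way equivalence along the route (i) $\iff$ (ii) and (ii) $\Rightarrow$ (iii) $\Rightarrow$ (i). The key ingredient throughout is unpacking when a subset $y \parallel x$ of events of $A^\perp \parallel A$ is down-closed under $\leq_{\CCC_A}$. By Lemma \ref{lemma:imc_cc}, the only dependencies added by copycat to those of $A^\perp \parallel A$ are of the form $(1-i,a) \imc (i,a)$ with $(i,a)$ positive in $A^\perp \parallel A$, so this reduces to requiring that every such positive event appearing in $y \parallel x$ also has its dual counterpart on the opposite side.

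For (i) $\Rightarrow$ (ii), I would first observe that $y \parallel x \in \conf{\CCC_A}$ forces $x, y \in \conf{A}$ (the two components have the same events as $A$). The copycat causalities then give: if $a \in x$ is positive in $A$, so that $(1,a)$ is positive in $A^\perp \parallel A$, then its added cause $(0,a)$ must be in the configuration, i.e.\ $a \in y$; dually, if $a \in y$ is negative in $A$, then $(0,a)$ is positive in $A^\perp \parallel A$ and one concludes $a \in x$. Thus $x \setminus y$ consists only of negative events and $y \setminus x$ only of positive events, which is exactly (ii) with intermediate configuration $x \cap y$. The implication (ii) $\Rightarrow$ (iii) is then immediate by choosing $z = x \cap y$; one just checks that $x \cap y$ is down-closed (inherited from down-closedness of both $x$ and $y$) and consistent (as a subset of the consistent $x$), hence a configuration of $A$.

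For (iii) $\Rightarrow$ (i), given $z$ with $x \supseteq^- z \subseteq^+ y$, I verify down-closedness of $y \parallel x$ under $\leq_{\CCC_A}$ by the same unfolding run in reverse: any positive $a \in x$ must lie in $z$ (since $x \setminus z$ is purely negative), hence in $y$; symmetrically for any negative $a \in y$. The consistency condition in the definition of $\CCC_A$ is satisfied because, after verifying down-closedness, the downward closure is simply $y \parallel x$, which is consistent in $A^\perp \parallel A$ since $y \in \conf{A^\perp}$ and $x \in \conf{A}$ sit in different components of the simple parallel composition. The main obstacle, I expect, is purely bookkeeping: carefully tracking the polarity reversal between $A$ and $A^\perp$, so that ``positive'' and ``negative'' line up on the correct side when invoking the copycat causality. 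Once this is straightened out, the proof is essentially a direct reading of the definition of $\CCC_A$, and antisymmetry/transitivity of $\sqsubseteq_A$ are then easy consequences of characterisation (ii).
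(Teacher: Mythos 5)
Your proof is correct and follows essentially the same route as the paper's: both arguments reduce membership of $y \parallel x$ in $\conf{\CCC_A}$ to down-closure under the added copycat links $(1-i,a) \imc (i,a)$ for positive $(i,a)$, chase polarities across the two components for (i)~$\Rightarrow$~(ii), take $z = x \cap y$ for (ii)~$\Rightarrow$~(iii), verify (iii)~$\Rightarrow$~(i) by checking exactly those added links, and observe that consistency is componentwise once down-closure is established. The only differences are cosmetic: you invoke Lemma~\ref{lemma:imc_cc} where the paper works directly from the generating relation in the definition of $\leq_{\CCC_A}$, and you leave the (genuinely routine) verification that $\sqsubseteq_A$ is an order to the reader, which the paper spells out.
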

\begin{proof}

\noindent {(i) $ \Rightarrow $ (ii).}\ We show $x \cap y \subseteq^+ y$; the other
    inclusion is similar. Let $a^-  \in  y$, we must show it is in
    $x$. Since $(0, a)  \in  A^\perp  \parallel  A$ is positive, we have
    $(1, a) <_{\CCC_A} (0, a)$. The down-closure of $y  \parallel  x$ implies that
    $(1, a)  \in  y  \parallel  x$ as $a  \in  y$. This exactly means that $a  \in  x$ as
    desired.\medskip

\noindent{(ii) $ \Rightarrow $ (iii).}\ clear.\medskip

\noindent {(iii) $ \Rightarrow $ (i).}\ Assume we have $x \supseteq^- z \subseteq^+ y$.
    The set $y  \parallel  x$ is clearly consistent so we need only prove it is
    down-closed. Since $x$ and $y$ are already down-closed in $A$, we
    need only to check for the additional immediate causal
    links. Assume we have $(1, a^+)  \in  y  \parallel  x$ (so $a  \in  x$). By
    hypothesis we have $a  \in  z$ because it is positive. Since
    $z \subseteq y$ we deduce $a  \in  y$ that is $(0, a)  \in  y  \parallel  x$ as
    desired. The case $(0, a^-)  \in  y  \parallel  x$ is similar.

\emph{It is an order.} It is clearly reflexive. If
    $x \supseteq^- (x \cap y) \subseteq^+ y$ and
    $y \supseteq^- (x \cap y) \subseteq^+ x$, it follows that
    $x \setminus x \cap y$ has to be empty thus $x = x \cap y = y$.

    For transitivity assume
    $x \supseteq^- (x \cap y) \subseteq^+ y \supseteq^- (y \cap z)
    \subseteq^+ z$. Then if $a  \in  x \setminus {z}$, there are two cases.
    If $a  \in  y$, then since $a\not \in y \cap z$, from $y \cap z \subseteq^- y$
    we know that $a$ is negative. If $a \not\in y$, then by $x \cap y \subseteq^- x$ it must be negative.
    Thus $x \supseteq^- (x \cap z)$ as desired -- the other inclusion
    is similar.\qedhere
\end{proof}

If $x \sqsubseteq_A y$ then intuitively $y$ has more output for less input. 
This is analogous to, and in special cases coincides with,   the order on functions in domain theory; hence the name ``Scott order'' . 
In summary, configurations of $\CCC_A \circledast S $ correspond to pairs
$(x, y) \in \mathscr{C} (S) \times \mathscr{C} (A)$ with
$y \sqsubseteq_A \sigma x$. 

%
%
%
%

\subsubsection{Step \#3: Discrete fibrations}

Since configurations of $\CCC_A \circledast S$ can be elegantly expressed using the Scott order,
it will be key to our proof that strategies satisfy a \emph{discrete fibration property} 
with respect to it. We first recall:

\begin{defi}[Discrete fibration]\label{def:disfib}
  Let $(X, \leq_X)$ and $(Y, \leq_Y)$ be orders and
  $f : X \rightarrow Y$ be a monotonic map. It is a \textbf{discrete
    fibration} when for all $x \in X, y \in Y$ such that $y \leq_Y fx$
  there exists a unique $x' \leq_X x \in X$ such that $fx' = y$.
\end{defi}

Now, we prove the following characterisation of courtesy and receptivity.

\begin{lem}\label{lemma:fibration}
  Let $ \sigma  : S  \rightarrow  A$ be a pre-strategy. The following are equivalent:
  \begin{enumerate}[label=(\roman*)]
  \item [(i)] $ \sigma $ is courteous and receptive,
  \item [(ii)] $ \sigma  : ( \mathscr{C} (S), \supseteq^-)  \rightarrow  ( \mathscr{C} (A), \supseteq^-)$ and
    $ \sigma  : ( \mathscr{C} (S), \subseteq^+)  \rightarrow  ( \mathscr{C} (A), \subseteq^+)$ are discrete fibrations,
  \item [(iii)]
    $ \sigma : ( \mathscr{C} (S), \sqsubseteq_S) \rightarrow (
    \mathscr{C} (A), \sqsubseteq_A)$ is a discrete fibration.
  \end{enumerate}
\end{lem}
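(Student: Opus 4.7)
The plan is to establish (i) $\Rightarrow$ (ii) $\Leftrightarrow$ (iii) $\Rightarrow$ (i), leaning throughout on local injectivity of $\sigma$ and Lemma \ref{lemma:reflect}. For (i) $\Rightarrow$ (ii), the $\supseteq^-$ fibration falls out of iterating receptivity to lift negative extensions one event at a time. For the $\subseteq^+$ fibration, given $y \subseteq^+ \sigma x$, I would induct on $|\sigma x \setminus y|$: pick a positive $s \in x$ whose image is maximal in $\sigma x \setminus y$, and remove it. Courtesy forces $s$ to be maximal in $x$, for if $s \imc_S s'$ then $\sigma s \imc_A \sigma s'$; down-closure of $y$ then rules out $\sigma s' \in y$, contradicting maximality of $\sigma s$ in $\sigma x \setminus y$. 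Uniqueness is by local injectivity on $x$.

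For (ii) $\Leftrightarrow$ (iii), I would decompose a Scott extension $y \sqsubseteq_A \sigma x$ through its canonical intermediate $y \cap \sigma x$ (Lemma \ref{lemma:scott}) and compose a $\subseteq^+$ lift $x_0$ with a $\supseteq^-$ lift $x'$. For uniqueness, given another Scott lift $x''$, the key is showing $\sigma(x'' \cap x) = y \cap \sigma x$: for $a \in y \cap \sigma x$ with $a = \sigma s = \sigma s'$ ($s \in x$, $s' \in x''$), the polarity of $a$ determines which of $x$ or $x''$ must contain both $s$ and $s'$ (positive $a$ forces $s' \in x'' \cap x$ via $x'' \supseteq^-(x'' \cap x)$; negative $a$ forces $s \in x'' \cap x$ via $(x'' \cap x) \subseteq^+ x$), and local injectivity on that configuration yields $s = s'$. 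Conversely, given the Scott fibration and $y \subseteq^+ \sigma x$ (viewed as $y \sqsubseteq_A \sigma x$), the lift $x'$ satisfies $x' \cap x = x'$ because $\sigma(x' \cap x) = y = \sigma x'$ with $x' \cap x \subseteq x'$ and local injectivity on $x'$; this gives $x' \subseteq^+ x$, and the $\supseteq^-$ case is dual.

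For (ii) $\Rightarrow$ (i), receptivity is immediate via the $\supseteq^-$ fibration applied to $\sigma x \cup \{a^-\}$. For courtesy, assume $s \imc_S s'$ with $(\pol(s), \pol(s')) \neq (-, +)$. Any putative intermediate $a$ with $\sigma s <_A a <_A \sigma s'$ can be ruled out by Lemma \ref{lemma:reflect} applied inside $[s']$, which would produce $s'' \in [s']$ with $s < s'' < s'$, contradicting $s \imc_S s'$; it therefore suffices to derive $\sigma s <_A \sigma s'$. If $s$ is positive, I would consider $y = \sigma[s'] \setminus \{\sigma s\}$: assuming $\sigma s \not<_A \sigma s'$, the event $\sigma s$ is maximal in $\sigma[s']$ (by the same Lemma \ref{lemma:reflect} argument combined with $s \imc_S s'$), so $y \in \conf{A}$ and $y \subseteq^+ \sigma[s']$. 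The $\subseteq^+$ fibration forces the preimage within $[s']$ to be $[s'] \setminus \{s\}$ (by local injectivity), which is not a configuration since $s < s'$ is not maximal in $[s']$ -- contradiction.

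The main obstacle will be the remaining case $s^- \imc_S s'^-$. I would exploit that $s \imc_S s'$ is equivalent to $s$ being maximal in $[s')$, so $x_0 := [s') \setminus \{s\}$ is a configuration. Supposing $\sigma s \not<_A \sigma s'$, the set $\sigma x_0 \cup \{\sigma s'\}$ is a configuration of $A$ (its down-closure uses $\sigma s \not<_A \sigma s'$), and two successive $\supseteq^-$ lifts produce $x_0 \cup \{t, u\} \in \conf{S}$ with $\sigma t = \sigma s'$ and $\sigma u = \sigma s$, both negative. Since $[s']$ is also $\supseteq^- x_0$ and maps to $\sigma[s']$, uniqueness of the $\supseteq^-$ fibration from $x_0$ to $\sigma[s']$ forces $x_0 \cup \{t, u\} = [s']$, hence $\{t, u\} = \{s, s'\}$. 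But $t = s'$ would make $x_0 \cup \{s'\}$ a configuration, which fails since $s \in [s'] \setminus (x_0 \cup \{s'\})$; and $t = s$ yields $\sigma s = \sigma s'$, impossible by local injectivity on $[s']$. Both branches contradict, completing the proof.
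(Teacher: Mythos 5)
Your proposal has one genuine gap, and it sits exactly where the paper places a warning. In (i) $\Rightarrow$ (ii) you claim the $\supseteq^-$ discrete fibration ``falls out of iterating receptivity''. It does not: iterating receptivity gives \emph{existence} of a lift of a multi-event negative extension (add the events of $y \setminus \sigma x$ one at a time, in any order compatible with $\leq_A$), but it does not give \emph{uniqueness}, which is half of the discrete fibration property. The paper exhibits a counterexample immediately after this very lemma: on the game $A = \circleddash_1\ \circleddash_2$ (two concurrent negative moves), the receptive pre-strategy consisting of two conflicting branches $\circleddash_1 \imc \circleddash_2$ and $\circleddash_2 \imc \circleddash_1$ admits two distinct lifts of $y = \{\circleddash_1, \circleddash_2\}$ above $x = \emptyset$ --- receptivity holds at every atomic step, yet the result of the iteration depends on the order in which the events are lifted. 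So your step, as justified, would fail; uniqueness of the $\supseteq^-$ lift genuinely needs courtesy. The repair is available to you, since in (i) you have courtesy: given two lifts $x_1, x_2 \supseteq^- x$ of $y$, pick $a \in y \setminus \sigma x$ that is $\leq_A$-minimal there, and let $s_i \in x_i$ with $\sigma s_i = a$. Since $s_i$ is negative, courtesy applies to every $t \imc s_i$ (the pair of polarities cannot be $(-,+)$), so $\sigma t \imc_A a$, hence $\sigma t \in [a)_A \subseteq \sigma x$ and, by local injectivity, $t \in x$; thus $x \longcov{s_i}$, and the uniqueness clause of receptivity gives $s_1 = s_2$, allowing an induction on $|y \setminus \sigma x|$ to conclude.

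The rest of the proposal is correct. The $\subseteq^+$ half of (i) $\Rightarrow$ (ii) (removing a positive event maximal in $\sigma x \setminus y$, with courtesy guaranteeing its preimage is maximal in $x$) is sound; your (ii) $\Rightarrow$ (i) is essentially the paper's own argument; and your (ii) $\Leftrightarrow$ (iii) --- factoring a Scott extension through $y \cap \sigma x$, composing a $\subseteq^+$ lift with a $\supseteq^-$ lift, and proving uniqueness via the polarity-splitting identity $\sigma(x'' \cap x) = y \cap \sigma x$ --- is a clean argument that the paper does not give. Indeed the paper never proves (ii) $\Rightarrow$ (iii): it proves (i) $\Rightarrow$ (iii) directly (uniqueness by induction on $|y|$, existence by splitting $\sqsubseteq$ into atomic extensions) and gets (iii) $\Rightarrow$ (ii) essentially for free. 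Your routing makes the two ``rectangular'' fibrations carry all the weight, which is arguably more modular --- but that makes it all the more essential that the $\supseteq^-$ fibration, including its uniqueness half, be established correctly.
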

\begin{proof}
  \noindent (iii) $ \Rightarrow $ (ii):\ straightforward.\medskip

  \noindent (ii) $ \Rightarrow $ (i):\ \emph{Courtesy.} If $s_1^+  \rightarrowtriangle  s_2$ in $S$, then by
    using the discrete fibration property for $\subseteq^+$ we prove
    $ \sigma s_1  \leq   \sigma s_2$ (hence $ \sigma s_1  \rightarrowtriangle   \sigma s_2$ by Lemma \ref{lemma:reflect}). 
    Indeed if it is not the case, then $ \sigma s_1$ and $ \sigma s_2$ are concurrent in $A$ -- otherwise we would have
    $\sigma s_2 \leq \sigma s_1$, so $s_2 \leq s_1$ by Lemma \ref{lemma:reflect}, absurd.

    Hence $ \sigma [s_2] \setminus \{ \sigma s_1\}$ is a configuration of $A$ that positively
    extends to $ \sigma [s_2]$. Thus $[s_2]$ should be the positive extension
    of a configuration $x$ whose image in the game is
    $ \sigma [s_2] \setminus\{ \sigma s_1\}$. By local injectivity, $\sigma s_1 \neq \sigma s_2$,
    therefore $\sigma s_2 \in \sigma [s_2] \setminus\{ \sigma s_1\}$. By local injectivity again,
    this implies that $s_2\in x$, so $s_1\in x$ by down-closure, so $\sigma s_1 \in \sigma [s_2]
\setminus \{ \sigma s_1\}$,
    absurd. 

    If $s_1  \rightarrowtriangle  s_2^-$, the only case not already covered by the above is that
of $s_1^- \imc
    s_2^-$. Assume $\sigma\,s_1$ and $\sigma\,s_2$ are concurrent in $A$. Set $x = [s_2] \setminus
\{s_1,     s_2\} \in \conf{S}$. We have $\sigma x \subseteq^- \sigma x \cup \{\sigma s_2\}$, so by existence of
the discrete fibration property there is $x \subseteq x \cup \{s'_2\} \in \conf{S}$ and $\sigma s'_2 = \sigma s_2$. 
But likewise, $\sigma\,(x\cup \{s'_2\})$ extends in $A$
with $\sigma\,s_1$, so by existence of the discrete fibration property there is $s'_1$
such that $\sigma\,s'_1 = \sigma\,s_1$ and $x\cup \{s'_1, s'_2\} \in \conf{S}$. But then by
uniqueness of the dicrete fibration property we have $x\cup \{s_1, s_2\} = x \cup \{s'_1, s'_2\}$
so by local injectivity $s_1 = s'_1$ and $s_2 = s'_2$, contradicting $s_1 \imc s_2$ since
$x \cup \{s'_2\} \in \conf{S}$.

    \emph{Receptivity.} This is just an instance of the fibration
    property for $\supseteq^-$ for atomic extensions.\medskip

\noindent (i) $ \Rightarrow $ (iii):\ Let $x  \in   \mathscr{C} (S)$ and $y  \in   \mathscr{C} (A)$ such that $y \sqsubseteq  \sigma x$.

    \smallskip

    \emph{Uniqueness.} We prove by induction on the cardinal of
    $y \in \mathscr C(A)$, that for all
    $x_1, x_2 \in \mathscr{C} (S)$, if $x_i \sqsubseteq x$ and
    $ \sigma x_i = y$, then $x_1 = x_2$. Assume the result for all
    $y' \in \mathscr{C} (A)$ strictly smaller than a fixed
    $y \in \mathscr{C} (A)$. 

    First, we prove that $x_1$ and $x_2$ have the same positive
    events. Indeed if $s_1 \in x_1$ is positive, then by
    $\sigma x_1 = y = \sigma x_2$ there is a (unique) $s_2 \in x_2$
    such that $\sigma s_1 = \sigma s_2$. Since $x_i \sqsubseteq x$,
    $s_1$ and $s_2$ are in $x$, and by local injectivity implies $s_1 = s_2$.

    If all maximal events of $x_1$ and $x_2$ are positive, we are
    done by down-closure. Otherwise one of them has a negative maximal
    event, say \emph{wlog.}  $s_1 \in x_1$. Since
    $\sigma x_1 = \sigma x_2$ there is a unique $s_2 \in x_2$ such
    that $\sigma s_1 = \sigma s_2$. 

    If there exists $s_2' \in x_2$ with $s_2 \imc s'_2$, since
    $\sigma s_2$ is maximal in $\sigma x_1 = \sigma x_2$ (from Lemma
    \ref{lemma:reflect}, $ \sigma $ reflects causality), by courtesy we
    must have $s_2'$ positive, and hence $s_2' \in x_1$. It follows
    that $s_1, s_2$ are consistent (both in $x_1$).  Hence $s_1 = s_2$,
    and $s_1 \imc s_2'  \in  x_1$, which is absurd. Therefore $s_2 $ is
    maximal in $x_2$.

    This entails that $x_1 \setminus \{s_1\}$ and $x_2 \setminus \{s_2\}$
    are configurations of $S$ to which we can apply the
    induction hypothesis for the smaller $y' := y \setminus
    \{ \sigma s_1\}$: the configurations $x_1 \setminus\{s_1\}$ and
    $x_2\setminus \{s_2\}$ must be equal. Since
    $ \sigma s_1 = \sigma s_2$ is a negative extension of
    $ \sigma x_1 \setminus\{ \sigma s_1\}$, by receptivity it follows that
    $s_1 = s_2$.

    \bigskip
    \emph{Existence.} By induction on $\sqsubset$, the irreflexive version of $\sqsubseteq$ (by splitting it into atomic extensions).    
    If $y \longcov {a^+}  \sigma x$, write $s$ for the preimage of $a$ in
    $x$. If $s$ is not maximal in $x$, it means that there exists
    $s  \rightarrowtriangle  s'$ in $x$. By courtesy since $s$ is positive, we have
    $ \sigma s  \rightarrowtriangle   \sigma s'$ in $ \sigma x$.  Hence $a$ is not maximal in $ \sigma x$ which is
    absurd.
    If $ \sigma x \longcov {a^-} y$, it is a consequence of receptivity.\qedhere
\end{proof}

Note that for a pre-strategy $ \sigma  : S  \rightarrow  A$ it is not equivalent to be
receptive and to be a discrete fibration
$(\conf S, \supseteq^-) \rightarrow (\conf A, \supseteq^-)$, as
demonstrated by the following pre-strategy on the game $A =  \circleddash _1\  \circleddash _2$:

$$\xymatrix {
  \circleddash _2&
  \circleddash_1 \\
  \circleddash _1\ar@{~}[r] \ar@{-|>}[u] & \circleddash _2\ar@{-|>}[u] }$$
This pre-strategy is receptive but not a discrete fibration for
$\supseteq^-$. Indeed, for $x = \emptyset$,
$y = \{ \circleddash _1, \circleddash _2\}$ there are two possible
matching extensions $x\subseteq x'$. This pre-strategy fails courtesy
-- the equivalence only holds on courteous pre-strategies.

{Putting together the description of configurations of the interaction with copycat of Lemma
\ref{lemma:deadlock}, the characterisation of configurations of copycat in Lemma \ref{lemma:scott}
and the discrete fibration property above, we get
the first direction of the isomorphism between $\conf{S}$ and $\conf{\CCC_A \odot S}$.

\begin{prop}
Let $\sigma : S \to A$ be receptive and courteous. The discrete fibration property yields a
function:
\[
L_\sigma : \conf{\CCC_A \odot S} \to \conf{S}
\]
commuting with the projection to $A$ (\emph{i.e.} for all $x\in \conf{\CCC_A \odot S},
\sigma\,(L_\sigma\,x) = (\ccc_A \odot \sigma)\,x$).
\end{prop}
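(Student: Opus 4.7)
My plan is to assemble this map directly from the three ingredients that have just been established: the description of interactions with copycat via $\Psi_\sigma$ (Lemma~\ref{lemma:deadlock}), the Scott order characterisation of consistency in $\CCC_A$ (Lemma~\ref{lemma:scott}), and the discrete fibration property of $\sigma$ for $\sqsubseteq$ (Lemma~\ref{lemma:fibration}).

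Concretely, given $z \in \conf{\CCC_A \odot S}$, I would first take its minimal witness $[z] \in \conf{\CCC_A \circledast S}$ (using the bijection from the projection correspondence recalled in Section~\ref{sec:es}). I need to know that $\Psi_\sigma$ applies to $\tau = \ccc_A$; this amounts to checking that if $(i,a) \leq_{\CCC_A} (i',a')$ with both events mapped to $A^\perp$, then their images are comparable in $A^\perp \parallel A$ — which follows immediately from the characterisation of $\leq_{\CCC_A}$ recorded after the definition of copycat, since such a pair must satisfy $i = i' = 0$ and $a \leq_A a'$. Applying $\Psi_\sigma$ then yields a pair $(x_S, x_A) \in \conf{S} \times \conf{A}$ with $\sigma x_S \parallel x_A \in \conf{\CCC_A}$, which by Lemma~\ref{lemma:scott} is the condition $x_A \sqsubseteq_A \sigma x_S$.

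At this point the discrete fibration property gives me exactly what I want: since $\sigma$ is receptive and courteous, Lemma~\ref{lemma:fibration} provides a unique $x'_S \in \conf{S}$ with $x'_S \sqsubseteq_S x_S$ and $\sigma x'_S = x_A$. I set $L_\sigma(z) = x'_S$. Commutation with projection to $A$ is then automatic: the visible part of $[z]$ lies entirely in the rightmost copy of $A$ in $A \parallel A \parallel A$, and its image under $\ccc_A \circledast \sigma$ — which is precisely $(\ccc_A \odot \sigma)(z)$ after hiding — is $x_A = \sigma x'_S = \sigma(L_\sigma z)$.

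I do not foresee a major obstacle here, as all the technical work has been front-loaded into the preceding lemmas; the only slightly delicate point is to keep straight which component of the triple parallel composition $A \parallel A \parallel A$ is being read off at each step, so that the identification of the visible image of $[z]$ with $x_A$ (rather than $\sigma x_S$) is unambiguous. Everything else is a direct invocation of the cited results.
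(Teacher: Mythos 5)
Your proof is correct and follows essentially the same route as the paper's: take the minimal witness $[z]\in\conf{\CCC_A\circledast S}$, apply $\Psi_\sigma$ (Lemma~\ref{lemma:deadlock} instantiated with $\tau=\ccc_A$) together with Lemma~\ref{lemma:scott} to obtain $(x_S,x_A)$ with $x_A \sqsubseteq_A \sigma\,x_S$, and then invoke the discrete fibration property of Lemma~\ref{lemma:fibration} to extract the unique $x'_S \sqsubseteq_S x_S$ over $x_A$. You even supply two details the paper leaves implicit (the check that $\ccc_A$ satisfies the hypothesis of the deadlock-free lemma, and the explicit commutation with the projection, correctly identifying the visible part with $x_A$ rather than $\sigma\,x_S$); the only blemish is the harmless slip of writing the interaction's ambient game as $A\parallel A\parallel A$ where it is $1\parallel A\parallel A$.
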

\begin{proof}
Take $x \in \conf{\CCC_A\odot S}$, yielding $[x]_{\CCC_A \circledast S} \in \conf{\CCC_A \circledast
S}$. In turn, we get
\[
\Psi([x]_{\CCC_A \circledast S}) = (y_S, y_A) \in \conf{S} \times \conf{A}
\]
such that $y_A \sqsubseteq_A \sigma\,y_S$. But then, by the
discrete fibration property, there is a unique $z_S \sqsubseteq_S y_S \in \conf{S}$ such that
$\sigma\,z_S = y_A$; and we set $L_\sigma(x) = z_S$.
\end{proof}

We still have to prove that $L_\sigma$ preserves inclusion. To establish that, we need first to
characterise how the order on $\conf{\CCC_A \odot S}$, and in particular the covering relation, is
reflected on interaction witnesses through $\Psi$.

\begin{lem}\label{lemma:carac_cov}
  Let $x^1, x^2 \in \conf{\CCC_A \odot S}$ and let $(y^1_S, y^1_A), (y^2_S,y^2_A)$ be the
representations of minimal witnesses (\emph{e.g.}, $(y^1_S, y^1_A) = \Psi([x^1]_{\CCC_A\circledast
S})$).
The following are equivalent:
$$
\begin{aligned}
  x^1 \longcov + x^2\text{ in }\CCC_A \odot S&~~\Leftrightarrow~~ y^1_S \subseteq y^2_S~\&~y^1_A
\longcov + y^2_A \\
  x^1 \longcov - x^2\text{ in }\CCC_A \odot S&~~\Leftrightarrow~~ y^1_S = y^2_S~\&~y^1_A \longcov -
y^2_A \\
\end{aligned}$$
\end{lem}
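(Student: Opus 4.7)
My plan is to translate both equivalences through the order-isomorphism of Step \#1: configurations $x \in \conf{\CCC_A \odot S}$ correspond bijectively (and order-preservingly) to pairs $(y_S, y_A) \in \conf{S} \times \conf{A}$ satisfying $y_A \sqsubseteq_A \sigma\,y_S$ together with the minimal-witness condition of Lemma \ref{lemma:negative_max}. The visible events of the minimal witness representing $x$ are exactly the interaction events $((1,a),(1,a))$ for $a \in y_A$, so that $x$ is carried bijectively onto $y_A$. Consequently $x^1 \subseteq x^2$ entails componentwise inclusion $y^1_S \subseteq y^2_S$ and $y^1_A \subseteq y^2_A$, and $|x^2 \setminus x^1| = |y^2_A \setminus y^1_A|$; and polarity-preservation of $\ccc_A \odot \sigma$ ensures that the polarity of the event added in $\CCC_A \odot S$ matches $\pol_A(a)$ for the unique $a \in y^2_A \setminus y^1_A$ produced by an atomic extension.

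With this dictionary both $\Leftarrow$ implications are immediate: the hypotheses force $x^1 \subseteq x^2$ with $|x^2 \setminus x^1| = 1$ and with the polarity of the new event determined by that of $a$. The $\Rightarrow$ direction of the positive case is also automatic: $y^1_S \subseteq y^2_S$ comes for free from the order-preservation of $\Psi_\sigma$, while $y^1_A \longcov{+} y^2_A$ is simply the polarity-labelled restatement of ``one new visible event''.

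The single nontrivial content is the $\Rightarrow$ direction of the negative case: from $x^1 \longcov{-} x^2$ I must deduce $y^1_S = y^2_S$. I would argue by contradiction: assume $y^2_S \supsetneq y^1_S$ and pick $s \in y^2_S \setminus y^1_S$ that is $\leq_S$-maximal within this difference. Down-closure of $y^1_S$ forbids any event of $y^1_S$ strictly above $s$, so $s$ is in fact $\leq_S$-maximal in the whole of $y^2_S$; Lemma \ref{lemma:negative_max} then forces $s$ to be positive and $\sigma\,s \in y^2_A$. The Scott condition $y^1_A \sqsubseteq_A \sigma\,y^1_S$ says any event of $y^1_A$ outside $\sigma\,y^1_S$ must be negative, and local injectivity of $\sigma$ on $y^2_S$ yields $\sigma\,s \notin \sigma\,y^1_S$; so the positivity of $s$ rules out $\sigma\,s \in y^1_A$. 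Hence $\sigma\,s$ must lie in $y^2_A \setminus y^1_A = \{a\}$, forcing $\sigma\,s = a$ to be positive --- contradicting $a^-$.

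The main obstacle is precisely this last step: it is the only point where the proof cannot be handled by bookkeeping through $\Psi_\sigma$, and requires jointly invoking the minimal-witness characterisation (Lemma \ref{lemma:negative_max}) and the Scott-order structure (Lemma \ref{lemma:scott}).
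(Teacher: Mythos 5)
Your proof is correct and follows essentially the same route as the paper's: the easy directions are bookkeeping through the order-isomorphism $\Psi$ of Step \#1 together with the bijection between visible events and $y_A$, and the one substantive step (the negative $\Rightarrow$ direction) is the identical contradiction argument --- a maximal $s \in y^2_S \setminus y^1_S$ is positive with $\sigma s \in y^2_A$ by Lemma \ref{lemma:negative_max}, is excluded from $y^1_A$ by the Scott order plus local injectivity, and so would have to equal the added negative event $a$. Your handling of the $\Leftarrow$ directions (counting visible events directly through the order-iso, rather than constructing the paper's intermediate witness $\Psi^{-1}(y^2_S, y^1_A)$) is a harmless streamlining of the same idea, and your explicit justification of the ``wlog $s$ maximal in $y^2_S$'' step is if anything more careful than the paper's.
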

\begin{proof}
\emph{Positive extension.}
Assume $x^1 \longcov + x^2$. Then
  $( \ccc_A \odot \sigma)\,x^1 \longcov + ( \ccc_A \odot \sigma )\,x^2$ implying $y^1_A \cov
  y^2_A$.
  Moreover, we have $[x^1]_{\CCC_A  \circledast  S} \subseteq [x^2]_{\CCC_A  \circledast  S}$
  implying $y^1_S \subseteq y^2_S$.

  Conversely, we have $y^1_A \sqsubseteq_A y^2_A \sqsubseteq_A  \sigma y^2_S$ by hypothesis.  Hence
$(y^2_S, y^1_A)  \in  \Psi(\CCC_A  \circledast  S)$. Writing
 $\subseteq^0$ for extension by invisible events in $\CCC_A  \circledast  S$, we have:

  $$[x^1]_{\CCC_A  \circledast  S} = \Psi^{-1}(y^1_S, y^1_A) \subseteq^0 \Psi^{-1}(y^2_S, y^1_A)
\longcov + \Psi^{-1}(y^2_S, y^2_A) = [x^2]_{\CCC_A  \circledast  S}$$
  Hence $x^1 \longcov + x^2$ as desired.

\emph{Negative extension.}
If $x^1 \longcov - x^2$, then we have $y^1_A \longcov - y^2_A$ and
  $y^1_S \subseteq y^2_S$ by the same argument as in the previous
  equivalence.

  Assume there were a $s \in y^2_S \setminus y^1_S$. Without loss of
  generality $s$ can be assumed maximal in $y^2_S$. By Lemma \ref{lemma:negative_max}, $s$ is
  positive and $ \sigma s  \in  y^2_A$. If we had $\sigma\,s \in y^1_A$, then we would have 
  $\sigma\,s \in \sigma\,y^1_S$ as well as $y^1_A \sqsubseteq_A \sigma\,y^1_S$; so 
  $s\in y^1_S$ (by local injectivity), absurd. So, $s \in y^2_A \setminus y^1_A$, contradicting its
  positivity. Therefore, $y^1_S = y^2_S$ as desired.

  Conversely, if $y^1_S = y^2_S$ and $y^1_A \longcov - y^2_A$ then we have this
  extension in $\CCC_A  \circledast  S$:
  $$[x^1]_{\CCC_A  \circledast  S} = \Psi^{-1}(y^1_S, y^1_A) \longcov - \Psi^{-1}(y^1_S, y^2_A) =
  [x^2]_{\CCC_A  \circledast  S}$$
  yielding $x^1 \longcov - x^2$ in $\CCC_A \odot S$, since the event we added is visible.
\end{proof}

From that we easily get:

\begin{prop}
Let, $\sigma : S \to A$ be receptive and courteous, then the function 
$L_\sigma : \conf{\CCC_A \odot S} \to \conf{S}$ is monotonic.
\end{prop}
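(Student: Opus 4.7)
The plan is to reduce monotonicity to verification on covering relations via Lemma \ref{lemma:carac_cov}, then handle the two cases (negative and positive extension) separately, using the appropriate discrete fibration property from Lemma \ref{lemma:fibration}. Writing $(y^i_S, y^i_A) = \Psi([x^i]_{\CCC_A \circledast S})$ and $z^i = L_\sigma(x^i)$, so that $\sigma z^i = y^i_A$ and $z^i \sqsubseteq_S y^i_S$, the goal reduces to $z^1 \subseteq z^2$ whenever $x^1 \cov x^2$.

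First I would dispatch the negative case ($y^1_S = y^2_S$ and $y^1_A \cov^- y^2_A$): receptivity applied to $z^1$ produces a unique $s^-$ with $z' := z^1 \cup \{s^-\} \in \conf{S}$ and $\sigma z' = y^2_A$, and a routine Scott-order check shows $z' \sqsubseteq y^2_S$. Uniqueness of the $\sqsubseteq$-fibration at $y^2_S$ then forces $z^2 = z'$, whence $z^1 \subseteq z^2$.

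For the positive case ($y^1_S \subseteq y^2_S$ and $y^1_A \cov^+ y^2_A$ with new event $a^+$), I would proceed symmetrically via the $\sqsubseteq$-fibration. Since $y^1_A \sqsubseteq y^2_A = \sigma z^2$, there is a unique $w \sqsubseteq z^2$ with $\sigma w = y^1_A$. Observing that $z^2 \setminus \{s^+\}$, where $s^+$ is the unique preimage of $a$ in $z^2$, is itself $\sqsubseteq z^2$ with image $y^1_A$, uniqueness forces $w = z^2 \setminus \{s^+\}$; in particular $w \subseteq z^2$. The plan is then to identify $w$ with $z^1$ by uniqueness of the $\sqsubseteq$-fibration at $y^1_S$, which yields $z^1 \subseteq z^2$ as desired; this requires verifying $w \sqsubseteq y^1_S$.

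The hard part will be exactly this last verification. Its $\subseteq^+$-half ($y^1_S \setminus w$ positive) is immediate from $y^1_S \subseteq y^2_S$, $z^2 \sqsubseteq y^2_S$, and the positivity of $s^+$. The $\supseteq^-$-half reduces to showing that every event in $z^2 \cap (y^2_S \setminus y^1_S) \setminus \{s^+\}$ is negative. For a positive such $s$, local injectivity inside $z^2$ rules out $\sigma s = a$, so $\sigma s \in y^1_A$; then $y^1_A \sqsubseteq \sigma y^1_S$ forces $\sigma s$ into $\sigma y^1_S$, whose preimage in $y^1_S$ coincides with $s$ by local injectivity inside the consistent set $y^2_S \supseteq y^1_S$, contradicting $s \notin y^1_S$.
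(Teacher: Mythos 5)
Your proof is correct. Its skeleton --- reduce monotonicity to coverings, characterise coverings of $\conf{\CCC_A \odot S}$ via Lemma \ref{lemma:carac_cov}, split on the polarity of the added event, and conclude by uniqueness of the discrete fibration property of Lemma \ref{lemma:fibration} --- is the paper's, and your negative case spells out exactly the argument the paper leaves implicit. Your positive case, however, takes a genuinely different route, and it is the sounder one. The paper deduces from Lemma \ref{lemma:negative_max} that $\sigma\,y^1_S \subseteq^- y^1_A$ and $\sigma\,y^2_S \subseteq^- y^2_A$, and claims that $y^1_S \longcov{+} y^2_S$ follows, after which uniqueness concludes. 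That intermediate claim fails in general: take $A$ to be $b^- \imc a^+$, take $\sigma = \id_A$ (receptive and courteous), and let $x^1 \longcov{+} x^2$ be the configurations of $\CCC_A \odot S$ with visible events $\{b\}$ and $\{b,a\}$. The minimal witness of $x^1$ contains no internal event (copycat need not have forwarded the negative $b$ to $S$), so $y^1_S = \emptyset$, whereas the minimal witness of $x^2$ must contain the internal copies of both $b$ and $a$, so $y^2_S = \{b,a\}$; hence $y^2_S \setminus y^1_S$ has two events, one of them negative, and is not a positive covering (monotonicity itself of course still holds: $L_\sigma(x^1) = \{b\} \subseteq \{b,a\} = L_\sigma(x^2)$). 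Your argument asserts nothing of the sort: you pull $y^1_A \sqsubseteq_A y^2_A = \sigma\,z^2$ back along the Scott fibration over $z^2$, identify the resulting $w$ with $z^2 \setminus \{s^+\}$ by uniqueness, and then identify $w$ with $z^1$ by checking $w \sqsubseteq_S y^1_S$. Your verification of both halves of that Scott inequality is correct; the delicate half rests only on $z^2 \sqsubseteq_S y^2_S$, on $y^1_A \sqsubseteq_A \sigma\,y^1_S$, and on local injectivity, all of which are available. In effect, your proof repairs the paper's.

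One step you should still write out: for $w = z^2 \setminus \{s^+\}$ to be a configuration at all, $s^+$ must be maximal in $z^2$, and this is precisely where courtesy enters your positive case. If $s^+ \imc t$ for some $t \in z^2$, courtesy (the pair of polarities not being $(-,+)$) would give $a = \sigma\,s^+ \imc \sigma\,t$ with $\sigma\,t \in y^2_A$, contradicting the maximality of $a$ in $y^2_A$, which holds because $y^1_A = y^2_A \setminus \{a\}$ is down-closed. With this observation added, your argument is complete.
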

\begin{proof}
We prove it for coverings. If $x^1, x^2 \in \conf{S}$ are such that 
$x^1 \cov x^2$, we write $(y^1_S, y^1_A) = \Psi([x^1]_{\CCC_A\circledast S})$ and $(y^2_S,
y^2_A) = \Psi([x^2]_{\CCC_A\circledast S})$ for the representations of their minimal interaction witnesses. 

We distinguish two cases, depending on the polarity of the extension. If $x^1 \longcov{-} x^2$, then 
by the lemma above $y^1_S = y^2_S$ and $y^1_A \longcov{-} y^2_A$. It immediately follows that
$L_\sigma(x^1) \longcov{-} L_\sigma(x^2)$ by uniqueness of the discrete fibration property.
If $x^1 \longcov{+} x^2$, then $y^1_S \subseteq y^2_S$ and $y^1_A \longcov{+} y^2_A$. 
But then by
Lemma \ref{lemma:negative_max} we actually have $\sigma\,y^1_S \subseteq^- y^1_A$ and $\sigma\,y^2_S
\subseteq^- y^2_A$, from which it follows that $y^1_S \longcov{+} y^2_S$ as well. And then again, 
$L_\sigma(x^1) \longcov{+} L_\sigma(x^2)$ follows directly from uniqueness of the discrete fibration
property.
\end{proof}
}

\subsubsection{Step \#4: Reconstructing minimal interactions}
Reciprocally, from $x\in \conf{S}$, we have seen that
the pair $(x, \sigma\,x)$ represents a configuration in $\CCC_A \circledast S$ that 
gives us a configuration of the composition through hiding. But it might not be the
minimal witness, \emph{i.e.} it might not satisfy the conditions of Lemma \ref{lemma:negative_max}.

In order to prove the desired isomorphism, we need to extract from $x$ a $x^*$ such that
$(x^*, \sigma\,x)$ satisfies these conditions.
The configuration $x^*$ 
 is obtained by stripping all the maximal negative events away from  $x$, as detailed now.

\begin{lem}\label{lemma:unique_antecedant}
  Let $x  \in   \mathscr{C} (S)$. There is a unique $x^* \subseteq x  \in   \mathscr{C} (S)$ such that $\Psi^{-1} (x^*,  \sigma x)   \in  \mathscr{C} (\CCC_A  \circledast  S)$
  and all maximal events of $\Psi^{-1} (x^*,  \sigma x)$ are visible. 
{Restricting to visible
events, this 
yields a monotonic function
\[
\begin{array}{rcrcl}
R_\sigma &:& \conf{S} &\to& \conf{\CCC_A \odot S}\\
&& x &\mapsto& (\Psi^{-1}(x^*, \sigma x)) \cap (\CCC_A \odot S)
\end{array}
\]
}\end{lem}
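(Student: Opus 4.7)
The plan is to identify $x^*$ concretely as the set of events of $x$ lying below some positive event of $x$, and then verify the conditions. Define
\[
x^* = \{s \in x \mid \exists s' \in x,~s \leq_S s'~\text{and}~\pol_S(s') = +\}.
\]
This is down-closed in $S$ (if $s'' \leq s \leq s'$ with $s'$ positive then $s'' \leq s'$) and a subset of $x$, hence is a configuration. By construction every maximal event of $x^*$ is positive: a maximal $s \in x^*$ has some positive $s' \geq s$ in $x$, but then $s' \in x^*$ too, so $s = s'$. Also $x \setminus x^*$ consists only of negative events, since any positive $s \in x$ trivially belongs to $x^*$.

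To verify that $\Psi^{-1}(x^*, \sigma x) \in \conf{\CCC_A \circledast S}$, by Lemma~\ref{lemma:deadlock} and Lemma~\ref{lemma:scott} I need $\sigma x \sqsubseteq_A \sigma x^*$. Since $x^* \subseteq x$, local injectivity on $x$ gives $\sigma x \setminus \sigma x^* = \sigma(x \setminus x^*)$, which consists of negative events by the previous paragraph and preservation of polarity. Thus $\sigma x \supseteq^- \sigma x^* = \sigma x \cap \sigma x^*$, giving $\sigma x \sqsubseteq_A \sigma x^*$. For the maximality condition, Lemma~\ref{lemma:negative_max} reduces to checking that maximal events of $x^*$ are positive with image in $\sigma x$, which holds (the second conjunct is trivial as $x^* \subseteq x$).

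For uniqueness, suppose $x' \subseteq x$ also satisfies the two conditions. The condition on maximal events of $\Psi^{-1}(x', \sigma x)$ together with Lemma~\ref{lemma:negative_max} again forces all maximal events of $x'$ to be positive, and the Scott-order condition forces $x \setminus x'$ to contain only negative events. I claim this forces $x' = x^*$. On the one hand any positive $s \in x$ must lie in $x'$ (since $x \setminus x'$ is negative), and since $x'$ is down-closed, any event of $x$ below a positive event of $x$ lies in $x'$; hence $x^* \subseteq x'$. Conversely, every element $s \in x'$ is either positive (so in $x^*$), or negative, in which case $s$ cannot be maximal in $x'$, and iterating upward through $x'$ we reach a maximal, hence positive, element $s' \geq s$ of $x' \subseteq x$, showing $s \in x^*$.

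It remains to check monotonicity of $R_\sigma$. If $x \subseteq y$ in $\conf{S}$ then $\sigma x \subseteq \sigma y$, and the argument above shows $x^* \subseteq y^*$: a positive $s \in x^* \subseteq x \subseteq y$ lies in $y^*$, while a negative $s \in x^*$ has a positive witness $s' \geq s$ in $x \subseteq y$, so $s \in y^*$. Since $\Psi$ is an order-isomorphism for componentwise inclusion, $\Psi^{-1}(x^*, \sigma x) \subseteq \Psi^{-1}(y^*, \sigma y)$ in $\conf{\CCC_A \circledast S}$, and intersecting with the set of visible events preserves inclusion, so $R_\sigma(x) \subseteq R_\sigma(y)$. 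The only mildly delicate step is the uniqueness argument, where one has to combine the fibration-style condition that $x \setminus x'$ be purely negative with down-closure to propagate membership both ways; once the explicit description of $x^*$ is in hand, the rest is bookkeeping.
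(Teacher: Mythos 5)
Your proof is correct and follows essentially the same route as the paper's: your $x^*$ is exactly the paper's witness $[x^+]_S$ (the down-closure of the positive events of $x$), and both arguments rest on the same two ingredients, namely Lemma \ref{lemma:negative_max} to translate visibility of maximal events into ``maximal events of $x^*$ are positive with image in $\sigma x$'', and the Scott-order characterisation (Lemma \ref{lemma:scott}) to reduce membership in $\conf{\CCC_A \circledast S}$ to $\sigma x \sqsubseteq_A \sigma x^*$, i.e.\ to $x\setminus x^*$ being purely negative. The only cosmetic difference is that where you invoke Lemma \ref{lemma:scott} in the uniqueness step, the paper re-derives the relevant down-closure fact about copycat directly; your monotonicity argument also just spells out what the paper declares ``clear''.
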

\begin{proof}
  \emph{Uniqueness}. Assume we have two $x'_1$ and $x'_2$ in
  $ \mathscr{C} (S)$ satisfying the hypotheses. The configurations
  $\Psi^{-1} (x'_1,  \sigma x)   \in  \mathscr{C} (\CCC_A  \circledast  S)$ and $\Psi^{-1} (x'_2,  \sigma x)   \in  \mathscr{C} (\CCC_A  \circledast  S)$
  correspond to secured bijections:
  \[
  x'_1 \parallel \sigma x \stackrel{\varphi_1}{\simeq} \sigma x'_1 \parallel \sigma x ~~~~~~~~~~~~~~~~~~ x'_2 \parallel \sigma x \stackrel{\varphi_2}{\simeq} \sigma x'_2 \parallel \sigma x
  \]
  whose maximal events are visible, and $\sigma x \sqsubseteq \sigma x'_1, \sigma x \sqsubseteq \sigma x'_2$.

  By Lemma \ref{lemma:negative_max}, the maximal events of $x'_1$ and
  $x'_2$ are positive. Moreover, we have
  $\sigma x'_1 \subseteq^- \sigma x$. Indeed, we already know that
  $\sigma x'_1 \supseteq^+\cdot  \subseteq^- \sigma x$, and for
  $a^+ \in \sigma x$, we have $(0, a) \leq (1, a) \in \CCC_A$. So,
  there is $((0, s), (0, a))\in \varphi_1$.  Therefore,
  $a = \sigma s \in \sigma x'_1$. With these two remarks, it is
  elementary to check (using $x'_1 \subseteq x$ and local injectivity)
  that $x'_1 = [x^+]$, where $x^+$ denotes the set of positive events
  of $x$ -- the same reasoning holds for $x'_2$, and hence $x'_1 = x'_2$.

%
%
%
  \emph{Existence}. Write $x^* = [x^+]_S$. The set $x \setminus x^*$
  contains all the negative events of $x$ without any positive event
  above them, thus we have $x^* \subseteq^- x$. Thus
  $ \sigma x \sqsubseteq \sigma x^*$, therefore
  $\Psi^{-1}(x^*, \sigma x) \in \conf{\CCC_A\circledast S}$.  Maximal
  events are visible because $x^*$ and $ \sigma x$ satisfy the
  condition (ii) of Lemma \ref{lemma:negative_max}.

{From the definition of $(-)^*$ above, the monotonicity of $R_\sigma$ is clear.}
\end{proof}

\subsubsection{Step \#5: Characterising the Scott order on $\conf{\CCC_A\odot S}$}

Using the above, we can prove that indeed receptivity and courtesy are sufficient to
be preserved by composition with copycat (proof forthcoming in Theorem \ref{thm:main_thm}).
For necessity, we will prove that strategies
obtained by composition with copycat are automatically discrete fibrations. In order to do that, we
first need to study the Scott order on $\conf{\CCC_A \odot S}$
(we write $V$ for the set of visible events of $\CCC_A \circledast S$, that is, the events of $\CCC_A \odot S$).

As we have seen, configurations of $\CCC_A \odot S$ correspond to certain 
pairs $\Psi(z) = (x, y) \in \conf{S} \times \conf{A}$ where the maximal events of $x$ are positive.
Progressing in $\sqsubseteq_{\CCC_A \odot S}$ means removing
some (maximal) negative events from $y$, and adding some positives to it. The first part is easy, as
these events had not been propagated to $x$ yet. However, adding some positives in $y$
might require to replay them first in $x$, along with their negative dependencies. For instance:

\begin{exa}
Consider $A = \mathbb W_1 \parallel \mathbb W_2$ and $\sigma$ playing on $A$, with event
structure $\mathbf \Click_1^- \imc \mathbf \Compute_1^+$ and concurrent $\mathbf \Click_2^-$. 
The two interactions below are minimal witnesses of (respectively) $x_1, x_2 \in \conf{\CCC_A \odot S}$, 
with $x_1 \sqsubseteq_{\CCC_A \odot S} x_2$:
\[
\xymatrix@R=5pt@C=5pt{
\mathbb W_1
	\ar@{}[r]|\parallel&
\mathbb W_2&
\mathbb W_1
	\ar@{}[r]|\parallel&
\mathbb W_2&~~~~~~~~~~&
\mathbb W_1
        \ar@{}[r]|\parallel&
\mathbb W_2&
\mathbb W_1 
        \ar@{}[r]|\parallel&
\mathbb W_2\\
&&\mathbf \Click_1^-&
\mathbf \Click_2^-&&&&
\mathbf \Click_1^-
	\ar@{-|>}[dll]\\
&&&&\sqsubseteq&\mathbf \Click_1
	\ar@{-|>}[d]\\
&&&&&\mathbf \Compute_1
	\ar@{-|>}[drr]\\
&&&&&&&\mathbf \Compute_1^+
}
\]
We observe that although the visible part progresses \emph{w.r.t.} the Scott order, the invisible
part only gains events, and potentially of both polarities: it progresses \emph{w.r.t.} plain inclusion.
\end{exa}

Formally, we prove the following lemma.

\begin{lem}\label{lemma:sc_on_comp}\label{lemma:scott_phi}
  Let $z, z' \in \conf{\CCC_A \odot S}$ and let $(x, y), (x',y')$ be the respective representations
of their minimal witnesses via $\Psi$.
The following are equivalent:
  \begin{enumerate}
  \item $z \sqsubseteq_{\CCC_A \odot S} z'$
  \item $y \sqsubseteq_A y'$ and $x \subseteq x'$
  \end{enumerate}
\end{lem}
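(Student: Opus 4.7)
The plan is to characterise both implications through Lemma \ref{lemma:carac_cov}, using a covering decomposition for the forward direction and constructing an explicit intermediate $w$ with representation $(x, y \cap y')$ for the backward direction.

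For $(1) \Rightarrow (2)$, I would unfold the Scott order as $z \supseteq^- w \subseteq^+ z'$ for some $w$ with representation $(x_w, y_w)$, and decompose each of the two inclusions into chains of atomic covers. By Lemma \ref{lemma:carac_cov}, each atomic negative cover on the path from $w$ up to $z$ preserves the $S$-component of the representation and extends the $A$-component by one negative; each atomic positive cover from $w$ up to $z'$ extends the $S$-component monotonically and the $A$-component by one positive. Composing along the chain yields $x = x_w \subseteq x'$ and $y \supseteq^- y_w \subseteq^+ y'$, which directly witnesses $y \sqsubseteq_A y'$.

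For $(2) \Rightarrow (1)$, I would define $w$ to be the configuration of $\CCC_A \odot S$ whose $\Psi$-representation is $(x, y \cap y')$. The main obstacle is checking that this is a valid representation, that is, that $y \cap y' \sqsubseteq_A \sigma x$ holds (interaction-compatibility coming from $\Psi$) and every maximal event of $x$ is positive with image in $y \cap y'$ (the visibility condition of Lemma \ref{lemma:negative_max}). The Scott-order condition reduces to showing $\sigma x \cap (y \setminus y') = \emptyset$: any hypothetical element of this set would be positive, as a member of $\sigma x \setminus y' \subseteq \sigma x' \setminus y'$ (using $x \subseteq x'$ and the validity $y' \sqsubseteq_A \sigma x'$), and simultaneously negative, as a member of $y \setminus y'$ (using $y \sqsubseteq_A y'$) — a contradiction. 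The visibility condition transfers from $(x, y)$ to $(x, y \cap y')$ by the same polarity argument: a maximal positive $s \in x$ has $\sigma s \in y$ by validity of $(x, y)$, and the alternative $\sigma s \notin y'$ would place $\sigma s$ in the negative set $y \setminus y'$, contradicting the positivity of $s$.

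Once $w$ is shown to be well-defined, the Scott chain $z \supseteq^- w \subseteq^+ z'$ follows readily. The componentwise inclusions $(x, y \cap y') \subseteq (x, y)$ and $(x, y \cap y') \subseteq (x', y')$ lift through $\Psi$ to inclusions of minimal witnesses in $\CCC_A \circledast S$, which restrict on visible events to $w \subseteq z$ and $w \subseteq z'$. The polarity of each extension is then read off the corresponding image in $A$: events of $z \setminus w$ map into $y \setminus y'$ (all negative by $y \sqsubseteq_A y'$), while events of $z' \setminus w$ map into $y' \setminus y$ (all positive), completing the two halves of the Scott chain.
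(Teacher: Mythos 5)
Your proof is correct. For the forward direction you do exactly what the paper intends: its entire proof is the phrase ``Immediate consequence of Lemma \ref{lemma:carac_cov}'', and your decomposition of the Scott chain $z \supseteq^- w \subseteq^+ z'$ into atomic covers, with Lemma \ref{lemma:carac_cov} applied step by step ($S$-component frozen along negative covers, monotone along positive ones), is the intended extraction. Where you genuinely diverge is the backward direction. Read literally, Lemma \ref{lemma:carac_cov} only lets one build covers in $\conf{\CCC_A \odot S}$ from pairs that are \emph{already known} to be representations of minimal witnesses, so reversing the argument requires producing valid intermediate representations --- something the paper's one-line proof leaves entirely implicit. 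You supply this: the candidate $w$ with representation $(x, y \cap y')$, the verification $y \cap y' \sqsubseteq_A \sigma x$ via the polarity clash on $\sigma x \cap (y \setminus y')$ (positive by $y' \sqsubseteq_A \sigma x'$ and $x \subseteq x'$, negative by $y \sqsubseteq_A y'$), and the transfer of the visibility condition of Lemma \ref{lemma:negative_max}. Moreover, rather than iterating Lemma \ref{lemma:carac_cov} in reverse (which would force you to check validity and visibility at every intermediate step), you bypass it: the order-isomorphism $\Psi$ turns the componentwise inclusions into inclusions of witnesses in $\conf{\CCC_A \circledast S}$, restriction to visible events gives $w \subseteq z$ and $w \subseteq z'$, and polarity preservation by $\ccc_A \odot \sigma$ reads the signs of $z \setminus w$ and $z' \setminus w$ off $y \setminus y'$ and $y' \setminus y$. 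This single-intermediate argument is cleaner than a cover-by-cover reconstruction and makes explicit exactly the content that the paper's ``immediate'' hides; the only cost is that it leans directly on Lemmas \ref{lemma:deadlock}, \ref{lemma:scott} and \ref{lemma:negative_max} rather than routing everything through the covering characterisation.
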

\begin{proof}
{Immediate consequence of Lemma \ref{lemma:carac_cov}.}
\end{proof}

\subsubsection{Step \#6: Wrapping up}
\label{subsubsec:wrapup}

Having introduced all the tools and lemmas needed for our proof, we now prove the main theorem.

\begin{thm}\label{thm:main_thm}
  Let $ \sigma  : S  \rightarrow  A$ be a pre-strategy. The following are equivalent:
  \begin{enumerate}[label=(\roman*)]
  \item [(i)] $ \sigma $ is a strategy
  \item [(ii)] $ \sigma  : ( \mathscr{C} (S), \supseteq^-)  \rightarrow  ( \mathscr{C} (A), \supseteq^-)$ and
    $ \sigma  : ( \mathscr{C} (S), \subseteq^+)  \rightarrow  ( \mathscr{C} (A), \subseteq^+)$ are discrete fibrations,
  \item [(iii)] the map $ \sigma  :( \mathscr{C} (S), \sqsubseteq_S)  \rightarrow  ( \mathscr{C} (A), \sqsubseteq_A)$ is a discrete fibration
  \item [(iv)] $ \sigma $ is courteous and receptive
  \end{enumerate}
\end{thm}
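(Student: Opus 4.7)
Since Lemma \ref{lemma:fibration} already establishes (ii) $\Leftrightarrow$ (iii) $\Leftrightarrow$ (iv), what remains is to link these to (i). I plan to prove (iii) $\Rightarrow$ (i) and (i) $\Rightarrow$ (iii) separately, leveraging all the machinery developed in the section.

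For (iii) $\Rightarrow$ (i), I plan to show that the maps $L_\sigma$ and $R_\sigma$ constructed earlier in the section are mutually inverse order-isomorphisms commuting with the projection to $A$. Both are designed to respect the projection ($\sigma \circ L_\sigma = \ccc_A \odot \sigma$ and $(\ccc_A \odot \sigma) \circ R_\sigma = \sigma$), so combining with Lemma \ref{lemma:eviso} will yield the event-structure isomorphism $\ccc_A \odot \sigma \cong \sigma$. The equation $L_\sigma \circ R_\sigma = \id_{\conf{S}}$ follows by noting that $\Psi([R_\sigma(x)]) = (x^*, \sigma x)$, that $x \sqsubseteq_S x^*$ (since $x \setminus x^*$ contains only negative events, by the very construction of $x^*$ in Lemma \ref{lemma:unique_antecedant}), and that $\sigma x = \sigma x$; the uniqueness part of the discrete fibration property then forces $L_\sigma(R_\sigma(x)) = x$. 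The equation $R_\sigma \circ L_\sigma = \id$ reduces, via Lemma \ref{lemma:unique_antecedant}, to showing the identity $(L_\sigma(z))^* = y_S$ where $\Psi([z]) = (y_S, y_A)$, which uses that $y_S = y_S^*$ already (by minimality of $[z]$ combined with Lemma \ref{lemma:negative_max}) together with the observation that $L_\sigma(z)$ and $y_S$ share the same positive events.

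For (i) $\Rightarrow$ (iii), my plan is to show that $\ccc_A \odot \sigma$ is \emph{always} a discrete fibration for the Scott order, regardless of whether $\sigma$ is, and then transport this property back via the isomorphism given by (i). By Lemma \ref{lemma:sc_on_comp}, the Scott order on $\conf{\CCC_A \odot S}$ admits a concrete characterisation via $\Psi$-pairs. Given $z \in \conf{\CCC_A \odot S}$ with $\Psi([z]) = (y_S, y_A)$ and any $w \sqsubseteq_A y_A$, a configuration $z' \sqsubseteq z$ with $(\ccc_A \odot \sigma)(z') = w$ must correspond to a pair $(y'_S, w)$ with $y'_S \subseteq y_S$ satisfying the minimality condition of Lemma \ref{lemma:negative_max}; I will construct $y'_S$ as the down-closure inside $y_S$ of the preimages of positive events of $w$, with uniqueness inherited from Lemma \ref{lemma:unique_antecedant}. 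Since being a discrete fibration is invariant under order-isomorphisms commuting with projection to $A$, (i) transports this property from $\ccc_A \odot \sigma$ back to $\sigma$.

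The main obstacle will be the identity $(L_\sigma(z))^* = y_S$ needed for $R_\sigma \circ L_\sigma = \id$: it demands a careful reconciliation between the Scott-below relation on configurations of $S$ and the minimality condition of Lemma \ref{lemma:negative_max}, essentially arguing that no positive events can be dropped when passing from $y_S$ to $L_\sigma(z)$ because the Scott order only permits dropping negatives (and adding positives). Once this fact is in place, the rest of the proof is mostly routine diagram chasing.
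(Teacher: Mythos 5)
Your overall architecture coincides with the paper's: (ii) $\Leftrightarrow$ (iii) $\Leftrightarrow$ (iv) is Lemma \ref{lemma:fibration}; (iii) $\Rightarrow$ (i) by showing $L_\sigma$ and $R_\sigma$ are mutually inverse, monotone, and compatible with the projections, then invoking Lemma \ref{lemma:eviso}; (i) $\Rightarrow$ (iii) by proving the fibration property on $\conf{\CCC_A \odot S}$ and transporting it along the isomorphism. Your treatment of $L_\sigma \circ R_\sigma = \id_{\conf{S}}$ is correct and is exactly the paper's argument, and your plan for (i) $\Rightarrow$ (iii) is sound. However, there is a genuine error in the step you yourself single out as the main obstacle, namely $(L_\sigma(z))^* = y_S$.

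You justify ``$L_\sigma(z)$ and $y_S$ share the same positive events'' by claiming that no positive event can be dropped in passing from $y_S$ to $L_\sigma(z)$ ``because the Scott order only permits dropping negatives (and adding positives)''. This reads the Scott order in the wrong direction. The fibration property gives $L_\sigma(z) \sqsubseteq_S y_S$, and by Lemma \ref{lemma:scott} this means precisely that $y_S \setminus L_\sigma(z)$ consists of \emph{positive} events and $L_\sigma(z) \setminus y_S$ of \emph{negative} ones: descending the Scott order is exactly dropping positives and adding negatives. So the Scott order alone puts no obstruction on dropping positives --- what it forbids is dropping negatives, the opposite of what you need. The inclusion $y_S^+ \subseteq L_\sigma(z)$ is true, but it requires an ingredient you never invoke: \emph{courtesy}. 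Since (iii) implies courtesy (Lemma \ref{lemma:fibration}), the final (``Moreover'') clause of Lemma \ref{lemma:negative_max} yields $\sigma\, y_S \subseteq^- y_A$; hence for each positive $s \in y_S$ we get $\sigma s \in y_A = \sigma(L_\sigma(z))$, so $s$ has a positive preimage $s' \in L_\sigma(z)$; by the Scott-order direction you \emph{do} have, $s' \in y_S$, and local injectivity on $y_S$ gives $s = s'$, i.e.\ $s \in L_\sigma(z)$. Without courtesy the claim is simply false: for a pre-strategy with $s_1^+ \imc s_2^+$ mapped to two concurrent events of $A$, the pair $(\{s_1, s_2\}, \{\sigma s_2\})$ is a legitimate minimal witness with $\sigma s_1 \notin y_A$, so positive events of $y_S$ need not reach $y_A$ in general. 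This courtesy-plus-local-injectivity argument (equivalently: establish $y_S \subseteq L_\sigma(z)$ this way and then appeal to the uniqueness clause of Lemma \ref{lemma:unique_antecedant}) is what actually closes the gap; with it in place, the rest of your proposal goes through.
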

\begin{proof}
  The equivalence between (ii), (iii), (iv) is proved by Lemma \ref{lemma:fibration}.\medskip

  \noindent
    (i) $\Rightarrow$ (iii):\ Let $f :  \sigma  \cong  \ccc_A \odot \sigma$ be an
    isomorphism of strategies. Let $(x, y) \in  \mathscr{C} (S)  \times   \mathscr{C} (A)$ with $y \sqsubseteq  \sigma x$. Write $\Psi([f(x)]_{\CCC_A  \circledast  S}) = (w,  \sigma x)  \in \Psi(\CCC_A  \circledast  S) $
    with $w  \in   \mathscr{C} (S)$ and $ \sigma x \sqsubseteq  \sigma w$.

    \emph{Existence.} Consider $x_0 = [\{ s  \in  w \mid  \sigma s  \in  y\}]^*$ (Lemma
    \ref{lemma:unique_antecedant}). By definition the maximal events
    of $\Psi^{-1}(x_0, y)$ are all visible.  Hence $(x_0, y)$
    corresponds to a configuration $z  \in   \mathscr{C} (\CCC_A \odot S)$. Applying
    $f^{-1}$ we get a configuration $x' \in \mathscr{C} (S)$ whose
    image by $\sigma$ is $y$.  Since $y \sqsubseteq \sigma x$ and
    $x_0 \subseteq w$, we have by Lemma
    \ref{lemma:scott_phi}, $z \sqsubseteq f(x)$.  Hence
    $x' \sqsubseteq x$ ($f^{-1}$ preserves the Scott order).

    \emph{Uniqueness.} Assume we have two $x'_1$ and $x'_2$ satisfying
    $x'_i \sqsubseteq x$ and $ \sigma x'_i = y$. We have
    $f(x'_1) = V \cap (\Psi(x''_1, y))$ and
    $f(x'_2) = V \cap (\Psi(x''_2, y))$ for some configurations $x''_1$
    and $x''_2$. Applying Lemma \ref{lemma:unique_antecedant} we get
    $x''_1 = x''_2$ which yields $f(x'_1) = f(x'_2)$ and then
    $x'_1 = x_2'$ by injectivity of $f$.\medskip

  \noindent (iii) $\Rightarrow$ (i):\ {We have constructed two inclusion-preserving maps
$L_\sigma : \conf{\CCC_A \odot S} \to \conf{S}$ and $R_\sigma : \conf{S} \to \conf{\CCC_A \odot S}$.
By construction, they are inverses -- $L_\sigma \circ R_\sigma = \id_{\conf{S}}$ by uniqueness of
the discrete fibration property, and $R_\sigma \circ L_\sigma = \id_{\conf{\CCC_A \odot S}}$ by
uniqueness of the $(-)^*$ operation in Lemma \ref{lemma:unique_antecedant}. By Lemma
\ref{lemma:eviso}, this yields the desired isomorphism between $\ccc_A \odot \sigma$ and $\sigma$.}\qedhere
\end{proof}

\section{The bicategory of concurrent games}
\label{sec:bicategory}
We have developed a notion of concurrent strategies, and characterised those
which behave well in an asynchronous, distributed world. 
For this to serve as a basis for the compositional semantics of 
concurrent processes or programs, it is of paramount importance to study the
categorical structure of strategies, \emph{i.e.} the algebraic laws
satisfied by composition. 

Usually -- as described first by Joyal on Conway games \cite{JoyalGazette} -- composition
of strategies yields a category having games as objects, strategies as morphisms
and copycat strategies as identities. Here however, we cannot use equality to
compare strategies. Indeed, take $\sigma : S \to A$ and $\sigma' : S' \to A$
two strategies on $A$. As we have observed in Section \ref{sec:evstrat}, comparing
them requires us first to relate $S$ and $S'$, which we do via a map
$f : S \to S'$ making the obvious triangle commute. This map is in general not
unique: {we saw below Definition \ref{def:strategy} a strategy with two automorphisms.}

{For 
many purposes, the exact identity of an isomorphism 
relating two strategies is 
irrelevant, and in these cases we can (and 
later will) quotient to a category.
This quotient, and the investigation of its further structure, will be carried out in Section
\ref{sec:compact}. But
the un-quotiented structure also matters -- when working with our games,
one is 
often led 
to reason on representatives rather than isomorphism classes (for instance
when computing infinite strategies as limits of $\omega$-chains of finite strategies).
Similarly, further developments in this framework (
beyond this paper) rely on
properties of composition that the quotiented category is too rough to convey.
So we first investigate the composition operation without quotienting, and
show
how the specific isomorphisms}
between strategies fit in the categorical picture. This is the purpose of this
section, where we will establish that games, strategies and maps between them
form a bicategory. We will {first review the definition of a bicategory. Then, we will turn to
the construction of our concrete bicategory of concurrent games.}

\subsection{Bicategories}
{First, recall that a \textbf{bicategory} $\C$ consists of the following \emph{basic data} 
(with notations inspired from our concrete bicategory of concurrent games):
\begin{itemize}
\item A set of \emph{objects}, or $0$-cells {(we use $A, B, C, \dots$ to range over objects)}.
\item For any two objects $A$, $B$, a category $\C(A,B)$. Its
objects are the \emph{morphisms} or $1$-cells of $\C$ (we use $\sigma, \tau, \dots$ to range over
morphisms, and write \emph{e.g.} $\sigma : A \profto B$), and its morphisms are the \emph{$2$-cells} of $\C$ (we
range $f, g, \dots$ to range over $2$-cells, and write \emph{e.g.} $f : \sigma \tto \tau$).
\item For each object $A$, a distinguished morphism $\ccc_A : A \profto A$, called the
\emph{identity}.
\item For each objects $A, B, C$, a functor:
\[
\odot : \C(B, C) \times \C(A, B) \to \C(A, C)
\]
\end{itemize}

\noindent The functor $\odot$ gives the \emph{composition} $\tau \odot \sigma : A \profto C$ of $1$-cells
$\sigma : A \profto B$ and $\tau : B \profto C$, but its functorial action also allows us to transport
$2$-cells alongside compositions. For instance, if $f : \sigma \tto \sigma'$, then $\tau \odot f :
\tau \odot \sigma \tto \tau \odot \sigma'$ (A consequence of that is that isomorphism of $1$-cells
is a congruence, \emph{i.e.} is preserved under composition).

But that is not all. In a bicategory, the associativity of $\odot$ and neutrality of $\ccc_A$ do not in general hold
in the strict sense (or we would have a $2$-category), but only up to \emph{coherent isomorphisms}.
This means that we have the following \emph{isomorphisms}:
\begin{itemize}
\item For any $\sigma : A \profto B, \tau : B \profto C, \rho : C \profto D$, an isomorphism (the
\emph{associator}):
\[
\alpha_{\sigma, \tau, \rho} : (\rho\odot \tau) \odot \sigma \tto \rho\odot (\tau \odot \sigma)
\]
natural in $\sigma, \rho$ and $\tau$.
\item For any $\sigma : A \profto B$, two isomorphisms (the \emph{unitors}):
\[
\rho_\sigma : \sigma \odot \ccc_A \tto \sigma \hspace{40pt} \lambda_\sigma : \ccc_B \odot \sigma \tto
\sigma
\]
natural in $\sigma$.
\end{itemize}

\noindent Finally, these data need to satisfy some coherence conditions: the associators are subject to Mac
Lane's \emph{pentagon identity} whereas the unitors must satisfy the \emph{triangle identity} (those are
the same as in a monoidal category). We do not recall them now, but we will state them in the course
of the construction of our concrete bicategory.

We now go on to construct our concrete bicategory. We gave the definition of a bicategory in two
steps: first what we called the \emph{basic data}, and then natural isomorphisms for associativity
and unities, subject to coherence conditions. Our construction of the concrete bicategory will follow
the same lines.}

\subsection{{Basic data of the bicategory $\CG$}} {As expected, the \emph{objects} of $\CG$ are the
games, and the \emph{morphisms} from $A$ to $B$ are the strategies $\sigma : S \to A^\perp \parallel
B$. As in the definition above we will occasionally write $\sigma : A \profto B$, keeping the $S$
anonymous.

For $\sigma : S \to A^\perp \parallel B$, $\tau : T \to B^\perp \parallel C$, a \emph{$2$-cell} $f :
\sigma \tto \tau$ is a map of esps $f : S \to T$ making the following triangle commute:
\[
\xymatrix@R=15pt@C=15pt{
S       \ar@/^/[rr]^f
        \ar[dr]_{\sigma}&&
T       \ar[dl]^{\tau}\\
&A^\perp \parallel B
}
\]
Such $2$-cells can be composed as functions, and for $\sigma : S \to A^\perp \parallel B$ the
identity $\id_S : S \to S$ is a valid $2$-cell $\id_\sigma : \sigma \tto \sigma$.
Therefore, for any two games $A$ and $B$ we get a category $\CG(A, B)$ as required by the definition.}

\paragraph{Functorial composition.} {Now, we need a functor:}
\[
\odot : \CG(B, C) \times \CG(A, B) \to \CG(A, C)
\]
For $\tau : B \profto C$ and $\sigma : A \profto B$, its action is 
the composition $\tau \odot \sigma$ as in Section \ref{sec:evstrat}.
This operation was defined on pre-strategies rather than strategies, so we note in 
passing:

\begin{prop}
For $\sigma : S \to A^\perp \parallel B$ and $\tau : T \to B^\perp \parallel C$ strategies, $\tau \odot \sigma$
is a strategy.
\end{prop}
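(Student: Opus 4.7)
The plan is to invoke Theorem \ref{thm:main_thm}: it suffices to verify that $\tau \odot \sigma$ is \emph{receptive} and \emph{courteous}. Both properties are established by lifting the analysis to the interaction $T \circledast S$ and reducing to receptivity and courtesy of $\sigma$ and $\tau$, using the description of interaction witnesses from Lemma \ref{lemma:deadlock} and of their immediate causal links from Lemma \ref{lemma:imc_bij}.

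For \textbf{receptivity}, take $z \in \conf{T \odot S}$ with $(\tau \odot \sigma)(z) \longcov{a^-}$ in $A^\perp \parallel C$. By Lemma \ref{lemma:deadlock}, its minimal witness $[z] \in \conf{T \circledast S}$ corresponds to a pair $(x, y) \in \conf{S} \times \conf{T}$ with $\sigma x, \tau y$ matching on $B$. The event $a$ lies in either $A^\perp$ or $C$. If $a \in A^\perp$ it is a negative event in $\sigma$'s game, and receptivity of $\sigma$ gives a unique $s \in S$ with $x \longcov{s}$ and $\sigma s = a$; since $\sigma s$ does not touch $B$, this $s$ extends $[z]$ uniquely by a visible negative event in $T \circledast S$, which after hiding yields the required unique extension of $z$. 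The case $a \in C$ is dual, using receptivity of $\tau$.

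For \textbf{courtesy}, suppose $p \imc p'$ in $T \odot S$ with $(\pol(p), \pol(p')) \neq (-, +)$, and aim to show $(\tau \odot \sigma)(p) \imc (\tau \odot \sigma)(p')$ in $A^\perp \parallel C$. Since $T \odot S = (T \circledast S) \downarrow V$ with $V$ the visible events, we have $p <_{T \circledast S} p'$ with no visible event strictly between; hence there is an $\imc$-chain $p \imc q_1 \imc \dots \imc q_n \imc p'$ in $T \circledast S$ whose intermediate events $q_i$ are all invisible $B$-synchronisations. I would argue that the first step $p \imc q_1$ must lie on the side where $p$ has a nontrivial component: if $p$ maps to $A^\perp$, it lies on the $S \parallel C$-side and yields $s \imc_S s_1$ with $s_1$ the $S$-component of $q_1$. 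Applying courtesy of $\sigma$, either $(\pol_S(s), \pol_S(s_1)) = (-, +)$ or $\sigma s \imc \sigma s_1$ in $A^\perp \parallel B$; but the latter is impossible as $\sigma s \in A^\perp$ and $\sigma s_1 \in B$ lie in disjoint components, forcing $\pol(p) = \pol_S(s) = -$. A dual analysis (using courtesy of $\tau$ when $p$ maps to $C$) gives $\pol(p) = -$ in every case, and the symmetric argument on $q_n \imc p'$ forces $\pol(p') = +$; this contradicts the hypothesis whenever $n \geq 1$. Hence $n = 0$, i.e., $p \imc p'$ directly in $T \circledast S$. Lemma \ref{lemma:imc_bij} then forces $p$ and $p'$ onto the same side (direct cross-side $\imc$ being impossible, since the relevant components would lie in disjoint summands of $S \parallel C$ or $A \parallel T$), and courtesy of $\sigma$ or $\tau$ delivers the required immediate causal link in the game.

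The main obstacle is the bookkeeping of polarities through chains of invisible $B$-synchronisations. The crucial facts are that each such synchronisation has $S$- and $T$-components of opposite polarities, that immediate links bridging disjoint components are forbidden, and that courtesy of the component strategies constrains how sides can be swapped along such a chain; combined, they rule out every configuration save a direct $\imc$ in $T \circledast S$ on a single side.
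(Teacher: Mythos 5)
Your route is genuinely different from the paper's: you verify condition (iv) of Theorem \ref{thm:main_thm} (receptivity and courtesy of $\tau \odot \sigma$), while the paper verifies condition (ii), establishing the discrete fibration properties of $\tau \odot \sigma$ for $\supseteq^-$ and $\subseteq^+$ directly on arbitrary, not necessarily atomic, extensions, using that $\sigma$ and $\tau$ are themselves fibrations. Your courtesy argument is correct and complete: the reduction to a chain $p \imc q_1 \imc \dots \imc q_n \imc p'$ of invisible events, the impossibility of immediate links across disjoint parallel components, and the resulting polarity constraints (an invisible event lies immediately above only \emph{negative} visible events and immediately below only \emph{positive} ones) are exactly right. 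One reference must be repaired, though: the description of interaction witnesses is Lemma \ref{lemma:confinter} (pairs of configurations inducing a \emph{secured} bijection), not Lemma \ref{lemma:deadlock}, whose hypothesis (order-preservation on one component) is tailored to copycat and fails for a general $\tau$; for arbitrary interactions securedness cannot be dropped, and you should note that your extensions preserve it because the added pair is maximal in the extended bijection.

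The genuine gap is in the uniqueness half of receptivity. You argue: receptivity of $\sigma$ gives a unique $s$ with $x \longcov{s}$ and $\sigma s = a$, hence the extension of $z$ is unique. This presupposes that every $p'$ with $z \longcov{p'}$ and $(\tau \odot \sigma)(p') = a$ has an $S$-component $s'$ with $x \longcov{s'}$. But $z \longcov{p'}$ in $T \odot S$ only constrains the \emph{visible} events below $p'$; the minimal witness $[z \cup \{p'\}] = [z] \cup [p']$ in $T \circledast S$ may contain invisible ($B$-mapped) events below $p'$ that are absent from $[z]$, in which case $s'$ extends a configuration strictly larger than $x$, and receptivity of $\sigma$ at $x$ says nothing about it. Receptivity of the components genuinely does not suffice here: if $\sigma$ is receptive but not courteous --- say $S$ contains $s_B^+ \imc s_A^-$ with $\sigma s_B \in B$ and $\sigma s_A = a$, plus a minimal $\tilde{s}_A^{\,-}$ over $a$ in conflict with $s_B$ --- then against a receptive $\tau$ (which will play the matching $B$-move) the composite has two distinct atomic extensions of $\emptyset$ over $a$, one passing through the invisible synchronisation and one not, so receptivity of $\tau \odot \sigma$ fails. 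The repair is available from your own courtesy analysis: your treatment of the step $q_n \imc p'$ shows precisely that a \emph{negative} visible event has no invisible immediate dependencies; hence for negative $p'$ with $z \longcov{p'}$, all immediate dependencies of $p'$ are visible and thus in $z$, so $[p') \subseteq [z]$, the witness of $z \cup \{p'\}$ is $[z] \cup \{p'\}$, and $x \longcov{s'}$ does hold --- only then does receptivity of $\sigma$ close the argument. As written, the receptivity part claims to use only receptivity of $\sigma$ and $\tau$, which is insufficient.
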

\begin{proof}
We use the second formulation of the definition of strategies, as in Theorem \ref{thm:main_thm}.

\emph{Negative fibration.} Take $x\in \conf{T \odot S}$ such that
$(\tau \odot \sigma)(x) \subseteq^- x'_A \parallel x'_C$ for some $x'_A 
\parallel x'_C \in \conf{A^\perp \parallel C}$.
By definition, its down-closure in $T\circledast S$ is a configuration
$y = [x] \in \conf{T\circledast S}$, whose maximal elements are
visible. By Lemma \ref{lemma:confinter}, this configuration is
represented by (the graph of) a secured bijection
$\varphi \in \secbij{\sigma\parallel C, A\parallel \tau}$.  We
write:
\[
y_S \parallel y_C~~\stackrel{\varphi}{\simeq}~~y_A \parallel y_T
\]
with $\sigma y_S = y_A \parallel y_B$ and $\tau y_T = y_B \parallel y_C$. 
By hypothesis we have $y_A \parallel y_B \subseteq^-y'_A \parallel y_B$, and 
$y_B \parallel y_C \subseteq^- y_B \parallel y'_C$ for some $y'_A \in 
\conf{A^\perp}$ and $y'_C \in \conf{C}$.
Since $\sigma$ and $\tau$ are strategies, there are unique
$y_S \subseteq y'_S \in \conf{S}$ and $y_T \subseteq y'_T \in \conf{T}$ such 
that $\sigma y'_S = y'_A \parallel y_B$ and
$\tau y'_T = y_B \parallel y'_C$. The induced extension of $\varphi$
\[
y'_S \parallel y'_C~~\stackrel{\varphi'}{\simeq}~~y'_A \parallel y'_T
\]
is secured: the added events only map to $A$ and $C$, so there is no interaction (hence potential deadlock) between $\sigma$ and $\tau$ going on. 
Moreover, $\varphi'$ represents a configuration $y\subseteq y'\in \conf{T\circledast S}$,
which maps to $x_A' \parallel x_B \parallel x'_C$. By projection we get the required extension of $x$. Uniqueness
follows directly from uniqueness for $y'_S$ and $y'_T$.

\emph{Positive fibration.} Similar reasoning.
\end{proof}

So composition, despite being defined on pre-strategies rather than strategies, preserves courtesy and
receptivity -- it is well-defined on $1$-cells of our bicategory. We now need to prove that it is
well-defined on \emph{$2$-cells} as well. In fact, we will show that it is well-defined on morphisms
between arbitrary pre-strategies, not only those that are receptive and courteous. 
Until Section \ref{subsec:unitors} (where we study compositions with copycat), the development 
will use neither receptivity nor courtesy.

Let
$\sigma : S \to A^\perp \parallel B$, $\sigma' : S' \to A^\perp \parallel B$ and $\tau : T \to
B^\perp \parallel C$ be pre-strategies, and $f : S \to S'$ be a morphism from $\sigma$
to $\sigma'$. We proved in Lemma \ref{lem:pullback} that the interaction $T\circledast S$ was the pullback of
$\sigma \parallel C$ and $A \parallel \tau$. By the corresponding universal property, it follows that
there is a unique map $f \circledast T : S \circledast T \to S'\circledast T$ making the required diagrams commute. In
particular, this remark establishes that the interaction operation $- \circledast -$ is functorial
in morphisms between pre-strategies.
In order for $\odot$ to inherit this, it is convenient to use that $\circledast$ and $\odot$ are related by a universal
property involving \emph{partial maps}:

\begin{defi}\label{def:partialmap}
A \textbf{partial map} of es(p)s $f : E \pto F$ is a partial function, such that
for all $x \in \conf{E}$ we have $f x \in \conf{F}$, and such that for all $e_1, e_2 \in x \in \conf{E}$,
if $f e_1 = f e_2$ (with both defined), then $e_1 = e_2$.
\end{defi}

A key example of a partial map in our setting, is the \emph{hiding map}: given an es(p) $E$ and
$V \subseteq E$, there is a partial map:
\[
\hid : E \pto E \proj V
\]
acting as the identity on $V$ and undefined otherwise. So in particular, for
pre-strategies $\sigma : S \to A^\perp \parallel B$ and $\tau : T \to B^\perp \parallel C$, there is a partial map:
\[
\hid : T \inter S \pto T \odot S.
\]
Projection and hiding provide a partial-total factorization system, which obeys:

\begin{lem}
Let $f : E \pto F$ be a partial map of es(p)s, and $V$ be the subset of events of $E$ on which $f$ is defined. Then, $f$ factors as
$(f \upharpoonright V) \circ \hid$ (where $f \upharpoonright V : E \proj V \to F$ is total). Moreover, for any other
factorisation $f = g_2 \circ g_1$ with $g_1 : E \pto X$ and $g_2 : X \to F$, there is a unique total $h : E\proj V \to X$ such that
$h \circ \hid = g_1$ and $g_2 \circ h = f \upharpoonright V$, as pictured in the diagram below:
\[
\xymatrix{
E       \ar@^{->}[d]^{\hid}
        \ar@^{->}[dr]^{g_1}
        \ar@^{->}@/_2pc/[dd]_f\\
E\proj V\ar[d]^{f\upharpoonright V}
        \ar@{.>}[r]^{h}&
X       \ar[dl]^{g_2}\\
F
}
\]
We say that $\hid : E \pto E \proj V$ has the \textbf{partial-total universal property}.
\label{lem:partotup}
\end{lem}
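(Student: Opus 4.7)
The plan has two parts: first verify the existence of the factorisation $f = (f \upharpoonright V)\circ \hid$ with $f\upharpoonright V$ total, then establish the universal property for a second factorisation $f = g_2\circ g_1$.

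For the factorisation itself, I need to check that $f\upharpoonright V : E\proj V \to F$ is a legitimate total map of es(p)s. Totality is by definition of $V$. For preservation of configurations, I would take $x \in \conf{E\proj V}$ and use the bijection with its minimal witness $[x]_E \in \conf{E}$ (from the projection discussion in Section~\ref{sec:es}): since $f$ is undefined outside $V$ and the events of $x$ are exactly the events of $V$ inside $[x]_E$, I get $(f\upharpoonright V)(x) = f([x]_E)$, which lies in $\conf{F}$ because $f$ is a partial map. Local injectivity of $f\upharpoonright V$ on a configuration $x$ follows similarly, by applying the local injectivity of $f$ inside $[x]_E$. That $f = (f\upharpoonright V)\circ \hid$ holds pointwise is immediate since $\hid$ is the identity on $V$ and undefined elsewhere, which matches the domain of definition of $f$.

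For the universal property, the first observation is that any alternative factorisation $f = g_2 \circ g_1$ forces $g_1$ to be defined exactly on $V$: if $g_1$ is defined at $e$, then $f(e) = g_2(g_1(e))$ is defined, so $e \in V$; conversely, if $e\in V$, then $g_2(g_1(e)) = f(e)$ is defined, so in particular $g_1(e)$ must be defined. This means I can define $h : E\proj V \to X$ on underlying sets by $h(v) = g_1(v)$ for $v\in V$. Totality is clear; preservation of configurations and local injectivity for $h$ are then inherited from those of $g_1$ by the same witness-based argument used for $f\upharpoonright V$.

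It remains to check the two equations and uniqueness. The equation $h\circ \hid = g_1$ holds because both sides are defined exactly on $V$ and agree there by construction, and $g_2 \circ h = f\upharpoonright V$ holds because $g_2(h(v)) = g_2(g_1(v)) = f(v) = (f\upharpoonright V)(v)$ for every $v \in V$. Uniqueness is immediate: the constraint $h \circ \hid = g_1$ fully determines the action of $h$ on $V = E\proj V$ as a set, so any other $h'$ satisfying the same equation must coincide with $h$ pointwise. The main subtlety throughout is the careful bookkeeping between a configuration $x$ of $E\proj V$ and its witness $[x]_E$ in $E$, which is what makes the partial-map axioms transfer cleanly between the two event structures; once this correspondence is in hand the rest of the argument is routine.
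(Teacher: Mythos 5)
Your proof is correct and is exactly the ``direct verification'' that the paper leaves to the reader: the paper's own proof of Lemma~\ref{lem:partotup} consists solely of the words ``Direct verification.'' Your bookkeeping via the witness bijection $x \mapsto [x]_E$ between $\conf{E\proj V}$ and configurations of $E$ with maximal events in $V$, the observation that any factorisation forces $\mathrm{dom}(g_1)=V$, and the uniqueness argument from $h\circ\hid=g_1$ alone are precisely the checks needed, so this fills in the omitted details faithfully.
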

\begin{proof}
Direct verification.
\end{proof}

From that, it is easy to construct the functorial action of $\odot$. 
Take $\sigma, \sigma', \tau$ and $f$ as above. As explained, we obtain
$T\circledast f : T \circledast S \to T \circledast S'$ by the universal property
of the interaction pullback.

But by Lemma \ref{lem:partotup}, the two maps $\hid_{\sigma, \tau} : T\circledast S \pto T\odot S$ and
$\hid_{\sigma', \tau} : T\circledast S' \pto T\odot S'$ have the partial-total 
universal property. Using it, we get a unique map $T \odot f : T \odot S \to T \odot S'$ matching
$T \circledast f$ up to hiding. It is straightforward from the universal properties that this operation is functorial,
that its symmetric counterparts $g \circledast S$ and $g \odot S$ are as well and that they satisfy the
interchange laws, yielding the required bifunctor. 

In fact we note in passing that $\odot$ preserves more general notions of morphisms of pre-strategies, 
that do \emph{not} leave the game invariant:

\begin{lem}\label{lem:gen_odot_fonc}
Consider two commuting diagrams between pre-strategies (using the obvious functorial action of
$(-)^\perp$ and $- \parallel -$ in $\ESP$):
\[
\xymatrix{
S_1     \ar[r]^{f}
        \ar[d]_{\sigma_1}&
S_2     \ar[d]^{\sigma_2}\\
A_1^\perp \parallel B_1
        \ar[r]^{h_1^\perp \parallel h_2}&
A_2^\perp \parallel B_2
}
~~~~~~
\xymatrix{
T_1     \ar[r]^g
        \ar[d]_{\tau_1}&
T_2     \ar[d]^{\tau_2}\\
B_1^\perp \parallel C_1
        \ar[r]^{h_2^\perp \parallel h_3}&
B_2^\perp \parallel C_2
}
\]
Then, the following diagram commutes.
\[
\xymatrix{
T_1\odot S_1
        \ar[r]^{g \odot f}
        \ar[d]_{\tau_1 \odot \sigma_1}&
T_2\odot S_2
        \ar[d]^{\tau_2\odot \sigma_2}\\
A_1^\perp \parallel C_1
        \ar[r]^{h_1^\perp \parallel h_3}&
A_2^\perp \parallel C_2
}
\]
\end{lem}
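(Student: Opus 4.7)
The plan is to mimic the construction of the functorial action of $\odot$ on $2$-cells as carried out just above, but allowing the game changes $h_1, h_2, h_3$ to propagate through the pullback and hiding stages. The construction has two steps: one at the level of the interaction, using the pullback universal property of $\tau_2 \circledast \sigma_2$, and one at the level of the composition itself, using the partial--total factorisation of $\hid$.

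First, I would build a map $g \circledast f : T_1 \circledast S_1 \to T_2 \circledast S_2$. The two candidate components into $S_2 \parallel C_2$ and $A_2 \parallel T_2$ are $(f \parallel h_3) \circ \Pi_1$ and $(h_1 \parallel g) \circ \Pi_2$ respectively. These must agree when post-composed with $\sigma_2 \parallel C_2$ and $A_2 \parallel \tau_2$: chasing the commuting squares of the hypothesis, both compositions reduce to $(h_1^\perp \parallel h_2 \parallel h_3) \circ (\tau_1 \circledast \sigma_1)$, using $(\sigma_1 \parallel C_1) \circ \Pi_1 = (A_1 \parallel \tau_1) \circ \Pi_2$ from the pullback square for $T_1 \circledast S_1$ (Lemma \ref{lem:pullback}). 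The universal property of $T_2 \circledast S_2$ then yields a unique $g \circledast f$ satisfying $\Pi_1 \circ (g \circledast f) = (f \parallel h_3) \circ \Pi_1$ and $\Pi_2 \circ (g \circledast f) = (h_1 \parallel g) \circ \Pi_2$, from which $(\tau_2 \circledast \sigma_2) \circ (g \circledast f) = (h_1^\perp \parallel h_2 \parallel h_3) \circ (\tau_1 \circledast \sigma_1)$ follows.

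Next, I would descend to $g \odot f$. Because $g \circledast f$ is compatible with $h_1^\perp \parallel h_2 \parallel h_3$, an event of $T_1 \circledast S_1$ mapping to $A_1$, $B_1$, or $C_1$ is sent to an event of $T_2 \circledast S_2$ mapping respectively to $A_2$, $B_2$, or $C_2$. In particular, the partial map $\hid_{\sigma_2, \tau_2} \circ (g \circledast f) : T_1 \circledast S_1 \pto T_2 \odot S_2$ is defined exactly on the visible events of $T_1 \circledast S_1$, which coincide with the domain of definition of $\hid_{\sigma_1, \tau_1}$. Lemma \ref{lem:partotup} then produces a unique total map $g \odot f : T_1 \odot S_1 \to T_2 \odot S_2$ factoring $\hid_{\sigma_2, \tau_2} \circ (g \circledast f)$ through $\hid_{\sigma_1, \tau_1}$; this specialises to the definition already used in the fixed-game case when each $h_i$ is an identity.

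Finally, the required commuting square is verified by evaluation on a visible event $e' \in T_1 \circledast S_1$ (which covers all events of $T_1 \odot S_1$): both sides reduce to $(h_1^\perp \parallel h_3)$ applied to $(\tau_1 \circledast \sigma_1)(e')$, using the interaction-level commutation established in step one together with the observation that, since $e'$ is visible, $(h_1^\perp \parallel h_2 \parallel h_3)$ acts on its image only through $h_1^\perp \parallel h_3$. The main obstacle is merely bookkeeping: tracking the various parallel composites and the pullback/partial--total invocations. No further use of receptivity or courtesy is needed; all required equations follow from the hypotheses together with the universal properties already established.
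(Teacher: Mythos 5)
Your proposal is correct and follows essentially the same route as the paper's proof: construct $g \circledast f$ via the pullback universal property of $T_2 \circledast S_2$ (using the two hypothesis squares), observe that the interaction-level square with $h_1 \parallel h_2 \parallel h_3$ then commutes, and descend to $g \odot f$ and the required square via the partial-total universal property of hiding. The only difference is that you spell out the diagram chase and the visibility-preservation argument that the paper leaves implicit.
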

\begin{proof}
For interactions first, the map $g\circledast f : T_1 \circledast S_1 \to T_2 \circledast S_2$ is defined from
the universal property of the pullback for $T_2 \circledast S_2$, using the two commuting diagrams
in
the hypothesis. It follows by definition that the diagram
\[
\xymatrix{
T_1\circledast S_1
        \ar[r]^{g \circledast f}
        \ar[d]_{\tau_1 \circledast \sigma_1}&
T_2\circledast S_2
        \ar[d]^{\tau_2\circledast \sigma_2}\\
A_1 \parallel B_1 \parallel C_1
        \ar[r]^{h_1 \parallel h_2 \parallel h_3}&
A_2 \parallel B_2 \parallel C_2
}
\]
commutes. The map $g \odot f : T_1\odot S_1 \to T_2\odot S_2$ and the required diagram commutation
follow from the partial-total universal property.
\end{proof}

\subsection{Associators}
\label{sec:assoc}
{
We now define the \emph{associator}, \emph{i.e.} for every three strategies 
$\sigma : A \profto B, \tau : B \profto C, \rho : C \profto D$, an isomorphism
\[
\alpha_{\sigma, \tau, \rho} : (\rho \odot \tau) \odot \sigma \Rightarrow \rho \odot (\tau \odot \sigma)
\]
natural in $\sigma, \tau, \rho$, and subject to \emph{Mac Lane's pentagon} (detailed in the
development below).}
%
We will start with the definition of the associator.

\paragraph{Associativity for interaction.} For the rest of this subsection we only consider polarity-agnostic operations, so we will
ignore polarity from now on. 

Consider $\sigma : S \to A \parallel B, \tau : T \to B \parallel C$, and $\rho : U \to C \parallel D$. The composition
$\rho \odot \tau : U\odot T \to B\parallel D$ is obtained by restriction from the mediating map $\rho \circledast \tau : U \circledast T \rightarrow B \parallel C \parallel D$
of the interaction pullback. In turn, we can form $(\rho \circledast \tau) \circledast \sigma : (U\circledast T) \circledast S \to A \parallel B \parallel C \parallel D$
as (the mediating map of) the pullback of $\sigma \parallel C \parallel D$ and $A\parallel (\rho \circledast \tau)$. From that (using that pullbacks
are stable under parallel composition) it appears that $(\rho \circledast \tau) \circledast \sigma$ is (the mediating map of) a ternary pullback
of $\sigma \parallel C \parallel D$, $A\parallel \tau \parallel D$
and $A \parallel B \parallel \rho$. But a similar reasoning holds for $\rho \circledast (\tau \circledast \sigma)$, so by the universal property of pullbacks,
there is a unique map $a_{\sigma, \tau, \rho}$, necessarily an isomorphism, making the projections to $\sigma \parallel C \parallel D, A\parallel \tau \parallel D$
and $A\parallel B \parallel \rho$ commute:
\[
\xymatrix@R=10pt@C=10pt{
(U\circledast T)\circledast S
	\ar@/^/[rr]^{a_{\sigma, \tau, \rho}}
	\ar[dr]_{(\rho\circledast \tau) \circledast \sigma\quad}&&
U \circledast (T\circledast S)
	\ar[dl]^{\quad\rho \circledast (\tau \circledast \sigma)}\\
&A\parallel B \parallel C\parallel D
}
\]
Given another $\delta : V \to D \parallel E$, all bracketings of the quaternary interaction between $\sigma, \tau, \rho, \delta$ can 
be obtained via pullbacks of $\sigma \parallel C \parallel D \parallel E, A \parallel \tau \parallel D \parallel E, A\parallel B \parallel \rho \parallel E$
and $A\parallel B \parallel C \parallel \delta$ taken in different orders. It follows from an easy diagram chase that \emph{Mac Lane's pentagon} commutes
at the level of interactions:
\[
\xymatrix@R=10pt@C=10pt{
&((V\circledast U) \circledast T) \circledast S
        \ar[dl]_{a_{\tau, \rho, \delta}\circledast S\quad}
        \ar[ddr]^{a_{\sigma, \tau, \rho\circledast \delta}}\\
(V\circledast (U\circledast T)) \circledast S
        \ar[dd]^{\ \ a_{\sigma, \rho\circledast \tau, \delta}}\\
&&(V\circledast U) \circledast (T\circledast S)
        \ar[ddl]^{\ \ a_{\tau \circledast \sigma, \rho, \delta}}\\
V\circledast ((U\circledast T) \circledast S)
        \ar[dr]_{V\circledast a_{\sigma, \tau, \rho}\quad}\\
&V\circledast (U \circledast (T\circledast S))
}
\]
To conclude associativity, we need to show how to reproduce the same reasoning on composition, or 
more adequately deduce it from that on interactions.

\paragraph{Partial-total factorization and hiding witnesses.} In order to deduce associators on composition and their
coherence from those on interactions, we generalize the partial-total universal property of 
Lemma \ref{lem:partotup} to $n$-ary interactions and compositions. For instance, we need to prove
that the hiding map (to be defined precisely):
\[
\hid : (U\circledast T)\circledast S \pto (U\odot T) \odot S
\]
has the partial-total universal property. It is 
rather inconvenient to prove it directly -- instead,
we prove an auxiliary property that is easier to combine.

\begin{defi}
Let $f : E \pto F$ be a partial map. A \textbf{hiding witness} for $f$ is a monotonic function:
\[
\wit_f : \conf{F} \to \conf{E}
\]
such that for all $x \in \conf{E}$, $\wit_f \circ f(x) \subseteq x$ and for all
$x\in \conf{F}$, $f\circ \wit_f(x) = x$.
\end{defi}

The hiding witness assigns, to any $x \in \conf{F}$, a canonical witness $\wit_f(x) \in \conf{E}$,
that projects back to $x$ through $f$. The hiding witnesses give a configuration-based version
of projection -- or of the partial-total factorization, as established by the lemma below.

\begin{prop}
Let $f : E \pto F$ be a partial map.
Then, the three following propositions are equivalent:
\begin{enumerate}[label=(\roman*)]
\item[(i)] There exists an isomorphism $\varphi : E \proj V \iso F$ such that 
$\varphi \circ \hid = f$ (where $V$ is the domain of definition of $f$ {-- note that $\varphi$
is necessarily $f$ restricted to $V$}),
\item[(ii)] $f$ has the partial-total universal property,
\item[(iii)] $f$ has a hiding witness.
\end{enumerate}
We call \textbf{hiding maps} any partial maps satisfying those properties.
Note that by \emph{(i)} it follows that in any hiding map $f$ is partial rigid, 
\emph{i.e.}
for any $e_1 \leq e_2$, if $f(e_1), f(e_2)$ defined then $f(e_1) \leq f(e_2)$.
\label{prop:hiding_carac}
\end{prop}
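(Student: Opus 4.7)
The plan is to prove the cycle (i) $\Rightarrow$ (iii) $\Rightarrow$ (i) (which gives the most direct handle on the concrete structure) and separately (i) $\Leftrightarrow$ (ii) via Lemma \ref{lem:partotup}. The final remark about partial rigidity will follow easily from (i), since both $\hid$ and $\varphi$ preserve the causal order on their domains.

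For (i) $\Rightarrow$ (iii), I would leverage the bijection mentioned after the definition of projection, between $\conf{E \proj V}$ and configurations $y \in \conf{E}$ whose maximal events all lie in $V$, via the minimal-witness assignment $x \mapsto [x]_E$. Define $\wit_f(x) = [\varphi^{-1}(x)]_E \in \conf{E}$ for $x \in \conf{F}$. Monotonicity is immediate since both $\varphi^{-1}$ and the minimal-witness map preserve inclusion. The equality $f \circ \wit_f(x) = x$ is a direct computation using $f = \varphi \circ \hid$, and $\wit_f \circ f(z) \subseteq z$ follows from the fact that $\wit_f(f(z))$ is by construction the \emph{minimal} witness, hence included in any configuration (such as $z$) that maps to $f(z)$ and whose events are at most in $V$ at the top — any extra invisible events can be stripped without changing the image.

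For (iii) $\Rightarrow$ (i), let $V$ be the domain of definition of $f$; I claim that $f \upharpoonright V : E \proj V \to F$ is an isomorphism of event structures. By Lemma \ref{lemma:eviso} it suffices to exhibit an order-iso on configurations, and $\wit_f$ is the candidate. The key technical step is to verify that $\wit_f(x)$ has all maximal events in $V$: if some maximal event $e$ of $\wit_f(x)$ were undefined under $f$, then $\wit_f(x) \setminus \{e\}$ would still be a configuration of $E$ mapping to $x$ under $f$; applying the property $\wit_f \circ f \subseteq \id$ to this smaller configuration yields $\wit_f(x) \subseteq \wit_f(x) \setminus \{e\}$, a contradiction. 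Thus $\wit_f$ lands in $\conf{E \proj V}$. The identity $f \circ \wit_f = \id_{\conf{F}}$ is given. For the converse, take $y \in \conf{E \proj V}$: we have $\wit_f(f(y)) \subseteq y$ by hypothesis; for any maximal $e \in y$, $e \in V$ so $f(e) \in f(y) = f(\wit_f(f(y)))$, hence by local injectivity of $f$ on configurations, $e \in \wit_f(f(y))$. Down-closure gives $y \subseteq \wit_f(f(y))$, hence equality.

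For (i) $\Rightarrow$ (ii), Lemma \ref{lem:partotup} states that $\hid : E \pto E \proj V$ has the partial-total universal property; since $f = \varphi \circ \hid$ with $\varphi$ an iso, $f$ inherits it by post-composition with this isomorphism. For (ii) $\Rightarrow$ (i), I would use the canonical factorization $f = (f \upharpoonright V) \circ \hid$ from Lemma \ref{lem:partotup}: both $\hid : E \pto E \proj V$ and $f : E \pto F$ are partial maps with the same domain of definition enjoying the partial-total universal property, so their respective universal properties produce unique mediating total maps $k : E \proj V \to F$ and $h : F \to E \proj V$ that must be mutually inverse by the uniqueness clauses. This yields the required isomorphism $\varphi = k$.

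The main obstacle I anticipate is the argument that $\wit_f(x)$ has maximal events in $V$ in the (iii) $\Rightarrow$ (i) step: it is a small, delicate calculation that must exploit the inequality $\wit_f \circ f \subseteq \id$ with just the right configuration. The (i) $\Leftrightarrow$ (ii) direction is essentially a universal-property manipulation that should be routine given Lemma \ref{lem:partotup}.
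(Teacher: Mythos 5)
Your proposal is correct and follows essentially the same route as the paper's proof: (i)$\Leftrightarrow$(ii) by transporting the universal property of Lemma \ref{lem:partotup} along the isomorphism and by playing the two universal properties (of $\hid$ and of $f$) against each other, and the witness directions by moving the canonical witness $x \mapsto [x]$ across the isomorphism and by building a monotone, mutually inverse pair of maps on configurations that Lemma \ref{lemma:eviso} upgrades to an isomorphism of event structures. The only differences are cosmetic: you close the cycle with (i)$\Rightarrow$(iii) where the paper proves (ii)$\Rightarrow$(iii) (equivalent, given (i)$\Leftrightarrow$(ii)), and your stripping argument that every maximal event of $\wit_f(x)$ lies in $V$ replaces the paper's device of defining the inverse as $y \mapsto \wit_f(y)\cap V$ and working with $[\,\wit_f(y)\cap V\,]$ --- both rest on the same two ingredients, namely the inequality $\wit_f \circ f \subseteq \id$ applied to a well-chosen smaller configuration, and local injectivity.
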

\begin{proof}
{(i) $\Leftrightarrow$ (ii).}\ From left to right, we transport through $\varphi$ the partial-total
universal property of Lemma \ref{lem:partotup}. From right to left, we use the fact that both
$\hid : E \pto E\proj V$ and $f : E \pto F$ have the partial-total universal property, yielding
the desired isomorphism.\medskip

\noindent{(ii) $\Rightarrow$ (iii).}\ W.l.o.g., we prove it for $\hid : E \pto E\proj V$. For $x \in \conf{E \proj V}$,
define $\wit(x) = [x] \in \conf{E}$. Clearly, $\hid(\wit(x)) = [x] \cap V = x$ and
$\wit(\hid(x)) = [x\cap V] \subseteq x$ as required, and it preserves union by definition.\medskip

\noindent{(iii) $\Rightarrow$ (i).}\ We construct the isomorphism on configurations:
\[
\begin{array}{rcrcl}
p &:& \conf{E\proj V} &\to& \conf{F}\\
&& x &\mapsto& f([x])\\\\
q &:& \conf{F} &\to& \conf{E\proj V}\\
&& y &\mapsto& \wit(y) \cap V
\end{array}
\]
It is clear by definition that these maps are monotonic, we need to prove that they are inverses of each other.
For one direction, for all $y\in \conf{F}$, since $\wit(y) \in \conf{E}$ it is down-closed in $E$ and thus can only differ
from $[\wit(y)\cap V] \in \conf{E}$ with events not in $V$, so $f([\wit(y)\cap V]) = f(\wit(y)) = y$, \emph{i.e.} $p\circ q(y) = y$.

For the other direction, we note first that if $x\in \conf{E}$ has all its maximal events in $V$, then $\wit(f(x)) = x$. Indeed,
we have $\wit(f(x)) \subseteq x$ by hypothesis. But both sides map to $f(x)$ via $f$, inducing by local injectivity
bijections $\wit(f(x)) \cap V \simeq f(x)$ and $x\cap V \simeq f(x)$. It follows that $\wit(f(x)) \cap V = x \cap V$. But
$x = [x\cap V]$ since its maximal elements are visible. Putting everything together:
\[
x = [x\cap V] = [\wit(f(x)) \cap V] \subseteq \wit(f(x)) \subseteq x
\]
So $x = \wit(f(x))$. Turning back to our main proof, we need to show that $q\circ p(x) = x$ for $x\in \conf{E\proj V}$,
\emph{i.e.} that $\wit(f([x])) \cap V = x$. But by definition, $[x]$ has its maximal events in $V$, so
$\wit(f([x])) = [x]$. So we are left to prove that $[x]\cap V = x$, which is clear.

So we have constructed an order-isomorphism between the domains of configurations of $E\proj V$ and $F$, which
yields an isomorphism by Lemma \ref{lemma:eviso}.
Finally, the required equality is obvious by Lemma \ref{lem:faithful}.
\end{proof}

\paragraph{Associators for composition.} The third formulation of hiding maps enables us to combine them in 
several ways. Firstly, they are stable under composition:

\begin{lem}
Let $\hid : E_1 \pto E_2$ and $\hid' : E_2 \pto E_3$ be hiding maps, then $\hid'\circ \hid : E_1 \pto E_3$ is a hiding map as well.
\label{lem:hiding_composition}
\end{lem}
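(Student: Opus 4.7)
My plan is to prove the lemma via characterisation \emph{(iii)} of Proposition \ref{prop:hiding_carac}, by exhibiting a hiding witness for the composite $\hid' \circ \hid$ built from the hiding witnesses for $\hid$ and $\hid'$.

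First, I would dispatch the easy observation that $\hid' \circ \hid : E_1 \pto E_3$ is a partial map of event structures. Configuration-preservation is immediate by composing the corresponding properties of $\hid$ and $\hid'$, and local injectivity follows from local injectivity of $\hid'$ on the configuration $\hid(x) \in \conf{E_2}$, then local injectivity of $\hid$ on $x \in \conf{E_1}$.

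The main content is the construction of the hiding witness. Let $\wit : \conf{E_2} \to \conf{E_1}$ and $\wit' : \conf{E_3} \to \conf{E_2}$ be the hiding witnesses for $\hid$ and $\hid'$, provided by Proposition \ref{prop:hiding_carac}. I would simply define
\[
\wit'' = \wit \circ \wit' : \conf{E_3} \to \conf{E_1},
\]
which is monotonic as a composition of monotonic maps. To verify the two defining equations of a hiding witness, I compute: for $x \in \conf{E_1}$, using monotonicity of $\wit$ and the witness property for $\hid'$, I get $\wit'(\hid'(\hid(x))) \subseteq \hid(x)$, hence $\wit''((\hid' \circ \hid)(x)) = \wit(\wit'(\hid'(\hid(x)))) \subseteq \wit(\hid(x)) \subseteq x$ by the witness property for $\hid$. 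For $x \in \conf{E_3}$, I compute $(\hid' \circ \hid)(\wit''(x)) = \hid'(\hid(\wit(\wit'(x)))) = \hid'(\wit'(x)) = x$, applying in turn the equation $\hid \circ \wit = \mathrm{id}$ and $\hid' \circ \wit' = \mathrm{id}$.

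I do not expect any real obstacle: the only subtlety is remembering that the witness property only requires $\wit \circ f \subseteq \mathrm{id}$ rather than equality, so the containment for the first equation must be chained via monotonicity rather than expecting equality. Applying Proposition \ref{prop:hiding_carac} then yields that $\hid' \circ \hid$ is a hiding map, concluding the proof.
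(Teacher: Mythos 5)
Your proof is correct and follows exactly the paper's approach: the paper's proof reads, in its entirety, ``Obvious, by composing the hiding witnesses.'' You have simply spelled out the details of that composition (including the point, correctly handled, that the first witness equation is an inclusion chained through monotonicity rather than an equality), and invoked characterisation \emph{(iii)} of Proposition \ref{prop:hiding_carac} as intended.
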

\begin{proof}
Obvious, by composing the hiding witnesses.
\end{proof}

We can also combine hiding maps ``horizontally", using the universal property of the interaction. For that though, we
need first to prove that this universal property applies to partial maps.

\begin{lem}
A pullback square in $\ES$ is also a pullback square in the category $\ES_\bot$ having event structures as objects, and
\emph{partial maps} as morphisms.
\label{lem:partial_pb}
\end{lem}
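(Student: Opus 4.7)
My plan is to reduce the partial-map universal property to the known total-map one via the partial-total factorisation (Lemma \ref{lem:partotup}), using that the two legs of a cone built from partial maps must share the same domain of definition once projected via a total map.

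\medskip

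\textbf{Setup.} Suppose we have a pullback square in $\ES$
\[
\xymatrix@R=10pt@C=15pt{
P \ar[r]^{p_1} \ar[d]_{p_2} & E_1 \ar[d]^{f_1} \\
E_2 \ar[r]_{f_2} & F
}
\]
and a competitor cone in $\ES_\bot$, i.e.\ an event structure $X$ with partial maps $\alpha_1 : X \pto E_1$ and $\alpha_2 : X \pto E_2$ such that $f_1 \circ \alpha_1 = f_2 \circ \alpha_2$ as partial maps. First I would observe that since $f_1, f_2$ are \emph{total}, the partial map $f_i \circ \alpha_i$ is defined at $x \in X$ exactly where $\alpha_i$ is. Equality as partial maps therefore forces $\mathrm{dom}(\alpha_1) = \mathrm{dom}(\alpha_2) =: V$, and the two partial maps agree on the (empty) portions outside $V$.

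\medskip

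\textbf{Reduction to the total case.} Using the partial-total factorisation (Lemma \ref{lem:partotup}), write $\alpha_i = \bar\alpha_i \circ \hid$ where $\hid : X \pto X \proj V$ is hiding and $\bar\alpha_i : X\proj V \to E_i$ is total. The commutation $f_1 \circ \alpha_1 = f_2 \circ \alpha_2$ then yields $f_1 \circ \bar\alpha_1 = f_2 \circ \bar\alpha_2$ as total maps, because $\hid$ is surjective on configurations and one can apply Lemma \ref{lem:faithful} after composing with $\hid$. So $(\bar\alpha_1, \bar\alpha_2)$ is a competitor cone in $\ES$ for the original pullback, and we get a unique total map $h : X \proj V \to P$ with $p_1 \circ h = \bar\alpha_1$ and $p_2 \circ h = \bar\alpha_2$.

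\medskip

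\textbf{Mediating partial map and uniqueness.} I then define the mediating partial map as $h \circ \hid : X \pto P$; the required equalities $p_i \circ (h \circ \hid) = \alpha_i$ follow by composition. For uniqueness, suppose $k : X \pto P$ is another partial map with $p_i \circ k = \alpha_i$. Since $p_i$ is total, $p_i \circ k$ is defined exactly where $k$ is, so $\mathrm{dom}(k) = \mathrm{dom}(\alpha_i) = V$. Hence $k$ factors as $\bar k \circ \hid$ for a unique total $\bar k : X \proj V \to P$, and the equalities on $p_i$ force $p_i \circ \bar k = \bar\alpha_i$. By the uniqueness clause of the pullback in $\ES$, $\bar k = h$, so $k = h \circ \hid$.

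\medskip

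The only mildly delicate point (and the closest thing to an obstacle) is the very first observation, namely the careful unfolding of ``commutes in $\ES_\bot$'' to conclude that the two legs have a common domain $V$; once that is in place, everything reduces mechanically to the total universal property and to the partial-total factorisation already established.
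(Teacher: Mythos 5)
Your proof is correct and follows essentially the same route as the paper's: observe that both legs of the cone share a common domain $V$ because the pullback legs are total, factor each partial map through $\hid : X \pto X \proj V$ via Lemma \ref{lem:partotup}, invoke the total pullback property to get the mediating map $h$, and obtain the partial mediating map as $h \circ \hid$, with uniqueness following from the two uniqueness clauses. The paper states this more tersely (it leaves the common-domain observation and the uniqueness bookkeeping implicit), but there is no difference in substance.
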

\begin{proof}
The proof is summarized in the following diagram:
\[
\xymatrix@R=15pt@C=15pt{
&X
	\ar@^{->}^{\hid}[d]
	\ar@^{.>}@/_2pc/_{f_1}[dddl]
	\ar@{-}@/_2pc/[dddl]
	\ar@^{.>}@/^2pc/^{f_2}[dddr]
	\ar@{-}@/^2pc/[dddr]\\
&X\proj V
	\ar@/_/_{f'_1}[ddl]
	\ar@/^/^{f'_2}[ddr]
	\ar@{.>}^g[d]\\
&P
	\pb{270}
	\ar[dl]
	\ar[dr]\\
A	\ar[dr]&&
B	\ar[dl]\\
&C
}
\]
Take $f_1, f_2$ partial maps such that the outer square commutes. Necessarily, $f_1$ and $f_2$ are defined on the same subset
of events of $X$; call it $V$. By Lemma \ref{lem:partotup}, $\hid : X \pto X\proj V$ satisfies the partial-total universal
property. By the universal property of the pullback in $\ES$, there exists a unique $g : X\proj V \to P$ making the 
triangle commutes, yielding a factorization $g\circ \hid : X \pto P$. Uniqueness follows directly from the 
uniqueness of the pullback and of the partial-total universal property.
\end{proof}

Therefore, we can use the universal property of the interaction pullback to manipulate and compose hiding maps.
This allows us to state and prove the lemma below, which plays a similar role to
the \emph{zipping lemma} used in proving associativity of composition in sequential games -- hence the name.

\begin{lem}[Zipping lemma]
Let $\hid : S \pto S'$ be a hiding map making the following diagram commute:
\[
\xymatrix@R=10pt{
S       \ar@^{->}[r]^\hid
        \ar[d]_{\sigma}&
S'      \ar[d]^{\sigma'}\\
A\parallel B \parallel C
        \ar@^{->}[r]^-{A\parallel \bot \parallel C}&
A\parallel C
}
\]\medskip

\noindent Then, for $\rho : U \to C \parallel D$, the morphism $U\circledast \hid : U\circledast S \pto U \circledast S'$
defined using the universal property of $U \circledast S'$ via Lemma \ref{lem:partial_pb} is a hiding map.
\label{lem:zipping}
\end{lem}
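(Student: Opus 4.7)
The plan is to exhibit a hiding witness for $U \circledast \hid$, using characterization \emph{(iii)} of Proposition~\ref{prop:hiding_carac}. Let $\wit_\hid : \conf{S'} \to \conf{S}$ be the hiding witness for $\hid$ guaranteed by hypothesis. First I would use Lemma~\ref{lemma:confinter} together with local injectivity to describe $\conf{U \circledast S}$ as the poset of pairs $(x_S, x_U) \in \conf{S} \times \conf{U}$ whose images agree on the shared $C$-component, and such that the induced bijection on their synchronized events is secured; analogously for $\conf{U \circledast S'}$. Under this representation, the partial map $U \circledast \hid$ acts on a pair $(x_S, x_U)$ by sending it to $(\hid(x_S), x_U)$, after restricting $x_S$ to the domain $V$ of $\hid$.

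Given $w \in \conf{U \circledast S'}$ corresponding to a pair $(y_{S'}, y_U)$, I define $\wit(w)$ to be the configuration of $U \circledast S$ associated with $(\wit_\hid(y_{S'}), y_U)$. To check this is well-defined, two points must be verified. First, the pair matches on $C$: by Proposition~\ref{prop:hiding_carac}\emph{(i)} together with the commuting diagram in the statement, $\hid$ restricted to its domain coincides (up to the iso $S \proj V \cong S'$) with the projection to the $A$- and $C$-components, so $\sigma(\wit_\hid(y_{S'}))$ and $\sigma'(y_{S'})$ share the same $C$-part, which by assumption equals the $C$-part of $\rho(y_U)$.

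Second, and this is the main obstacle, the induced bijection must be secured. Here the key observation is that invisible events of $\wit_\hid(y_{S'})$ are sent by $\sigma$ to $B$, hence do not participate in any synchronization with $U$ (which lives over $C\parallel D$). Thus a hypothetical causal loop in the bijection for $(\wit_\hid(y_{S'}), y_U)$ would traverse only visible events of $\wit_\hid(y_{S'})$; applying $\hid$ to those events and using partial rigidity of hiding maps (Proposition~\ref{prop:hiding_carac}), the loop would project to a causal loop in the bijection for $(y_{S'}, y_U)$, contradicting its securedness. Monotonicity of $\wit$ is inherited from that of $\wit_\hid$.

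Finally, the round-trip identities come for free from those of $\wit_\hid$: given the pair representation above, $(U \circledast \hid) \circ \wit(w) = w$ reduces to $\hid \circ \wit_\hid(y_{S'}) = y_{S'}$, and $\wit \circ (U \circledast \hid)(z) \subseteq z$ reduces to $\wit_\hid \circ \hid(x_S) \subseteq x_S$. To finish, one must also check that the underlying function on events of the resulting partial map coincides with $U \circledast \hid$ as constructed via Lemma~\ref{lem:partial_pb}; this is immediate from the uniqueness part of the universal property, since both maps induce the same action on configurations. The entire argument uses only the hypothesis that $\hid$ is a hiding map over the correct components of the game, not any courtesy or receptivity.
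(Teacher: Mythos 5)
Your proposal is correct and takes essentially the same route as the paper's proof: both exhibit a hiding witness (clause \emph{(iii)} of Proposition~\ref{prop:hiding_carac}) by sending the pair $(y_{S'}, y_U)$ representing a configuration of $U \circledast S'$ to $(\wit_{\hid}(y_{S'}), y_U)$, and both establish securedness by observing that the $B$-mapped events do not synchronize with $U$, so a causal loop would project, via partial rigidity, to a loop in the secured bijection for $(y_{S'}, y_U)$. One minor wording caveat: a loop need not traverse \emph{only} visible events --- it may pass through $B$-mapped pairs, whose segments contract to $S$-causality between visible events before projecting --- but the paper's own proof elides this at the same level of detail.
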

\begin{proof} We show that $U\circledast \hid$ has a hiding witness.
A configuration of $U\circledast S'$ corresponds to configurations $x_{S'}\parallel x_D$ and $x_A \parallel x_U$
of the event structures as annotated, such that:
\begin{eqnarray*}
\sigma' x_{S'} &=& x_A \parallel x_C\\
\rho x_U &=& x_C \parallel x_D
\end{eqnarray*}
and such that the induced bijection between $x_{S'} \parallel x_D$ and $x_A \parallel x_U$ is secured.

From that, we consider $\wit_\hid(x_{S'}) \parallel x_D$ and $x_A \parallel x_B 
\parallel x_U$, where
$x_B$ is obtained by $\sigma(\wit_\hid(x_{S'})) = x_A \parallel x_B \parallel 
x_C$. By construction we have
$(\sigma \parallel D)(\wit_\hid(x_{S'}) \parallel x_D) = (A \parallel B 
\parallel \rho)(x_A \parallel x_B \parallel x_U)$.
The induced bijection is secured: 
a causal loop in it could not stay in (events projected to) $B$, as the causality
on the corresponding pairs is entirely determined by $S$. So, using that $\hid$ 
is partial rigid by
Proposition \ref{prop:hiding_carac} it would induce a causal loop in the original bijection, that was 
supposed secured. All the additional properties to check follow by construction.
\end{proof}

At this point, we can define the associator. Recall that for $\sigma : S \to A \parallel B$, $\tau : T \to B \parallel C$
and $\rho : U \to C \parallel D$ we have the associator at the level of interactions:
\[
a_{\sigma, \tau, \rho} : (U\circledast T) \circledast S \to U \circledast (T\circledast S)
\]
By using the two lemmas above, we have two hiding maps:
\[
\xymatrix@R=0pt{
\hid_{\sigma, (\tau, \rho)} = (U\circledast T) \circledast S
	\ar@^{->}[r]^{~~~~~~\hid_{\tau, \rho}\circledast S}&
(U\odot T) \circledast S
	\ar@^{->}[r]^{\hid_{\sigma, \rho\odot \tau}}&
(U\odot T) \odot S\\
\hid_{(\sigma, \tau), \rho} = U \circledast (T\circledast S)
	\ar@^{-}[r]^{~~~~~~U\circledast \hid_{\sigma, \tau}}&
U\circledast (T\odot S)
	\ar@^{->}[r]^{\hid_{\tau \odot \sigma, \rho}}&
U \odot (T\odot S)
}
\]
From the definitions, it is easy to check that the following outer diagram commutes:
\[
\xymatrix@R=10pt@C=10pt{
U \circledast (T\circledast S)
	\ar[rr]^{a_{\sigma, \tau, \rho}}
	\ar@^{->}[d]_{\hid_{\sigma, (\tau, \rho)}}&&
(U \circledast T) \circledast S
	\ar@^{->}[d]^{\hid_{(\sigma, \tau), \rho}}\\
U\odot (T\odot S)
	\ar@{.>}[rr]^{\alpha_{\sigma, \tau, \rho}}
	\ar[dr]_{\rho \odot (\tau \odot \sigma)}&&
(U\odot T) \odot S
	\ar[dl]^{(\rho \odot \tau) \odot \sigma}\\
&A\parallel D
}
\]
So by the partial-total universal properties of $\hid_{(\sigma, \tau), \rho}$ and $\hid_{\sigma, (\tau, \rho)}$,
$a_{\sigma, \tau, \rho}$ induces a unique isomorphism $\alpha_{\sigma, \tau, \rho} : (U\odot T) \odot S \to U \odot (T\odot S)$
making the two sub-diagrams commute.

\paragraph{Naturality and coherence.} To conclude the associativity part of the bicategory construction, we need to check that
these isomorphisms are natural in $\sigma, \tau, \rho$ and satisfy Mac Lane's pentagon. In both cases, the proof consists in 
verifying it first for interactions (as we already did earlier from the pentagon), and deducing it for composition by
checking that the maps involved in the diagram for composition are canonically related to those for interaction, as above.
We skip the details, that can be recovered easily.

\subsection{Unitors}
\label{subsec:unitors}

The last ingredients of our bicategory are the two unitors. For any strategy $\sigma : S \to A^\perp \parallel B$, those are 
the two isomorphisms for cancellation of copycat:
\begin{eqnarray*}
\rho_\sigma &=& S \odot \CCC_A \to S\\
\lambda_\sigma &=& \CCC_B \odot S \to S
\end{eqnarray*}

We start by defining $\lambda_\sigma$ (and $\rho_\sigma$): their definition is not strictly speaking covered by the result
of Theorem \ref{thm:main_thm} which only dealt with closed compositions of a strategy $\sigma : S \to A$ with $\ccc_A$. However
the construction is very similar and will only be roughly sketched here.

\begin{lem}
Let $\sigma : S \to A^\perp \parallel B$. Then, there are order-isomorphisms:
\[
\begin{array}{l}
\Psi_r : \conf{S\circledast \CCC_A} \iso \{(x^l_A, x_S) \in \conf{A}\times \conf{S} \mid \sigma x_S = x^r_A \parallel x_B~\&~x^l_A \sqsupseteq_A x^r_A\}\\
\Psi_l : \conf{\CCC_B \circledast S} \iso \{(x_S, x^r_B)\in \conf{S}\times \conf{B}\mid \sigma x_S = x_A \parallel x^l_B~\&~x^r_B \sqsubseteq_B x^l_B\}
\end{array}
\]
where the right hand side sets are ordered by componentwise inclusion.
\label{lem:deadlock2}
\end{lem}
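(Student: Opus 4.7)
The plan is to extract both isomorphisms from a mild generalization of the Deadlock-free Lemma (Lemma \ref{lemma:deadlock}), applied to the copycat strategy. I focus on $\Psi_r$; the argument for $\Psi_l$ is entirely symmetric, with $\ccc_B$ and the Scott order on $B$ playing the roles of $\ccc_A$ and $\sqsubseteq_A$.

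First, I unfold $S \circledast \CCC_A$ as the pullback in $\ES$ of the padded maps $\id_A \parallel \sigma : A \parallel S \to A \parallel A \parallel B$ and $\ccc_A \parallel \id_B : \CCC_A \parallel B \to A \parallel A \parallel B$ (ignoring polarities). By Lemma \ref{lemma:confinter} and local injectivity, its configurations correspond to matching pairs of configurations $x^l_A \parallel x_S \in \conf{A \parallel S}$ and $(y^l_A \parallel y^r_A) \parallel y_B \in \conf{\CCC_A \parallel B}$ endowed with a secured bijection. Matching images in $A \parallel A \parallel B$ forces $x^l_A = y^l_A$, $y^r_A = x^r_A$ and $y_B = x_B$, where $\sigma\,x_S = x^r_A \parallel x_B$. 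Rewriting the constraint $y^l_A \parallel y^r_A \in \conf{\CCC_A}$ via Lemma \ref{lemma:scott} yields exactly $x^l_A \sqsupseteq_A x^r_A$, so the data of a matching pair is that of a pair $(x^l_A, x_S)$ in the right-hand set of $\Psi_r$. What remains is to check that every such pair induces a \emph{secured} bijection, which is the main content of the proof.

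This deadlock-free property is the natural generalization of Lemma \ref{lemma:deadlock} to the situation where both interacting maps have components outside the shared game. The key observation is that $\ccc_A \parallel \id_B$ preserves causality within each of the three components of $A \parallel A \parallel B$: trivially on $B$, and for each copy of $A$ by Lemma \ref{lemma:imc_cc}, which ensures that an immediate causal link in $\CCC_A$ staying on one side must come from $\imc_A$. Assume for a contradiction that the induced bijection contains a causal loop $(v_1, w_1) \vartriangleleft \cdots \vartriangleleft (v_n, w_n) \vartriangleleft (v_1, w_1)$. Each step corresponds to a strict inequality on at least one side. For a step with $w_i <_{\CCC_A \parallel B} w_{i+1}$, local injectivity makes the corresponding images in $A \parallel A \parallel B$ distinct, so the causality preservation of $\ccc_A \parallel \id_B$ forces $(\ccc_A \parallel \id_B)(w_i) <_{A \parallel A \parallel B} (\ccc_A \parallel \id_B)(w_{i+1})$ strictly; by the matching equality and Lemma \ref{lemma:reflect} (applied using consistency inside the configuration $x^l_A \parallel x_S$), it follows that $v_i <_{A \parallel S} v_{i+1}$ strictly. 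Combined with the direct steps where $v_i < v_{i+1}$ holds by definition, we obtain a strict cycle $v_1 < \cdots < v_n < v_1$ in $A \parallel S$, a contradiction.

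Thus every matching pair as above yields a configuration of $S \circledast \CCC_A$, and the correspondence $\Psi_r$ is established; preservation and reflection of componentwise inclusion is immediate from the configuration-level description. The main potential obstacle I anticipate is making rigorous the passage from Lemma \ref{lemma:deadlock} (where one of the interacting maps plays only on the shared game) to the present setting (where both maps have additional components); but the causality-preservation hypothesis concerns only the shared side, and $\ccc_A$ satisfies it on both of its sides, so the technique transports cleanly. The construction of $\Psi_l$ is obtained by the dual decomposition, replacing the Scott order on $A$ by that on $B$ and swapping the roles of left and right.
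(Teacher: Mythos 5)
Your setup is fine and matches the paper's intent: the paper proves this lemma by declaring it a ``straightforward adaptation of Lemma \ref{lemma:deadlock}'', and your unfolding of $S \circledast \CCC_A$ as the pullback of the padded maps, the identification of matching pairs via Lemma \ref{lemma:confinter} and local injectivity, and the translation of the copycat constraint through Lemma \ref{lemma:scott} are exactly that adaptation's routine part. The gap is in the securedness argument, which you correctly identify as the main content. Your key observation is that $\ccc_A \parallel \id_B$ preserves causality \emph{within each component} of $A \parallel A \parallel B$, but in the loop argument you apply this to an arbitrary step $w_i <_{\CCC_A \parallel B} w_{i+1}$, with no guarantee that $w_i$ and $w_{i+1}$ lie on the same side of copycat. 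Copycat does \emph{not} preserve causality across its two sides: its defining cross-side links $(1-i,a) \imc_{\CCC_A} (i,a)$ (for $(i,a)$ positive) are mapped by $\ccc_A$ to \emph{incomparable} events of $A^\perp \parallel A$, and these links are precisely what makes securedness non-trivial. Worse, for such a cross-side step the conclusion you draw, $v_i <_{A \parallel S} v_{i+1}$, is not merely unproven but impossible: the partner of a left-side copycat event is an event of the padding copy of $A$, while the partner of a right-side copycat event is an event of $S$, and $A$ and $S$ are parallel components of $A \parallel S$, hence never comparable. So any hypothetical loop containing a cross-side step cannot be transferred step-by-step into $A \parallel S$ as you claim.

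The repair follows the structure of the proof of Lemma \ref{lemma:deadlock} itself, where the causality-preservation hypothesis is applied only to \emph{composites} $t \leq t'$ whose two endpoints both map to the shared component, after intermediate steps through padding have been eliminated. Concretely: first dispose of padding steps ($<_B$ steps between pairs of $S$-events reflect into $<_S$ by Lemma \ref{lemma:reflect}; $<_A$ steps in the left copy of $A$ coincide with left-side copycat causality), so that every remaining step lies either in $S$ or in $\CCC_A$. Then group maximal runs of consecutive $\CCC_A$-steps: since events of $B$ cannot occur in a $\CCC_A$-step and the bordering $S$-steps force the run's first and last pairs to have their other component in $S$, each run begins and ends at pairs synchronised on the shared (right) side of copycat. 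Same-side copycat causality coincides with $\leq_A$, so the composite of each run yields a strict $A$-ordering of the $\sigma$-images of the corresponding $S$-partners, which Lemma \ref{lemma:reflect} reflects into $<_S$; all steps then collapse into a cycle in $\leq_S$, the desired contradiction. The cross-side links are thus absorbed inside runs rather than preserved individually --- this grouping is the content hidden behind the word ``straightforward'' in the paper's one-line proof, and it is what your step-by-step transfer misses.
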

\begin{proof}
Straightforward adaptation of Lemma \ref{lemma:deadlock}.
\end{proof}

At this point, it is also worth mentioning that it follows from courtesy of $\sigma$ that in a situation like in
the lemma above, we actually have $x^l_B \subseteq^- x^r_B$. No positive events can be added by going from $x^r_B$
to $x^l_B$, as using courtesy one can show that those could not be below a visible events. That fact is not used
in our development, so we skip the detailed proof.

We jump to the definition of the unitors:

\begin{lem}
For any $\sigma : S \to A^\perp \parallel B$, there are isomorphisms of strategies:
\[
\rho_\sigma : S \odot \CCC_A \to S~~~~~~~~~~~~~~~~\lambda_\sigma : \CCC_B \odot S \to S
\]
which respectively,
\begin{itemize}
\item To any $x\in \conf{S \odot \CCC_A}$ with unique witness $ [x] = \Psi_l^{-1}(x^l_A, x_S) \in \conf{\CCC_A\circledast S}$ with
$\sigma x_S = x^r_A \parallel x_B$ and 
$x^l_A \sqsubseteq_{A^\perp} x^r_A$, $\rho_\sigma$ associates the unique $x'_S \sqsubseteq x_S$ 
such that $\sigma x'_S = x^l_A \parallel x_B$ given by the discrete fibration property of $\sigma$.
\item To any $x \in \conf{\CCC_B \odot S}$ with unique witness $[x] = \Psi_r^{-1}(x_S, x^r_B) \in \conf{S \circledast \CCC_B}$
with $\sigma x_S = x_A \parallel x^l_B$ and 
$x^r_B \sqsubseteq_B x^l_B$, $\lambda_\sigma$ associates the unique $x'_S \sqsubseteq x_S$ such that $\sigma x'_S = x_A \parallel x^r_B$.
\end{itemize}
\label{lem:unitors}
\end{lem}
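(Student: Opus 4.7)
The plan is to adapt the argument of Theorem \ref{thm:main_thm} to this open setting. I focus on $\lambda_\sigma$; the case of $\rho_\sigma$ is symmetric. By Lemma \ref{lemma:eviso}, it suffices to build an inclusion-preserving bijection $\conf{\CCC_B \odot S} \iso \conf{S}$; commutation with the projection to $A^\perp \parallel B$ will then follow on events by Lemma \ref{lem:faithful}.

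First I would characterise minimal witnesses. By Lemma \ref{lem:deadlock2}, configurations of $\CCC_B \circledast S$ are represented by pairs $(x_S, x^r_B)$ with $\sigma x_S = x_A \parallel x^l_B$ and $x^r_B \sqsubseteq_B x^l_B$. Adapting Lemma \ref{lemma:negative_max} to this setting, the minimal witnesses of configurations of $\CCC_B \odot S$ -- those whose maximal events are visible, \emph{i.e.}~mapped to $A$ or to the right copy of $B$ -- are exactly the pairs for which every maximal $s \in x_S$ satisfies either $\sigma s \in A$, or $s$ is positive and $\sigma s \in x^r_B$. Given such a minimal witness $(x_S, x^r_B)$ for $x \in \conf{\CCC_B \odot S}$, one has $x_A \parallel x^r_B \sqsubseteq_{A^\perp \parallel B} x_A \parallel x^l_B = \sigma x_S$, so by Theorem \ref{thm:main_thm} applied to $\sigma$ itself, there is a unique $x'_S \sqsubseteq_S x_S$ with $\sigma x'_S = x_A \parallel x^r_B$; I set $\lambda_\sigma(x) = x'_S$.

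For the inverse direction, given $y \in \conf{S}$ with $\sigma y = y_A \parallel y_B$, I would follow the recipe of Lemma \ref{lemma:unique_antecedant} to produce a unique $y^\star \supseteq y$ such that $\Psi_l^{-1}(y^\star, y_B) \in \conf{\CCC_B \circledast S}$ and all its maximal events are visible; restricting to visible events then yields a configuration $R_\sigma(y) \in \conf{\CCC_B \odot S}$. Monotonicity of both $\lambda_\sigma$ and $R_\sigma$ is established as in the proof of Theorem \ref{thm:main_thm} via an analogue of Lemma \ref{lemma:carac_cov} describing coverings in $\conf{\CCC_B \odot S}$ in terms of the pair representation, and mutual inversion follows from the uniqueness clause of the discrete fibration property of $\sigma$ together with the uniqueness of the minimal-witness construction.

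The main obstacle is the correct formulation and proof of the minimal-witness characterisation in the first step: because visibility now encompasses both $A$ and the right copy of $B$, the case analysis on polarities of maximal events of $x_S$ is more delicate than in Lemma \ref{lemma:negative_max}, and will require courtesy of $\sigma$ on the $B$-side to conclude that certain immediate causal successors must themselves be positive and visible. Once this characterisation is in place, the rest of the argument is a mechanical re-run of the closed case and should go through without further surprises.
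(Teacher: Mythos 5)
Your route is the paper's own: the paper proves this lemma in one line, as a ``straightforward adaptation of \emph{(iii) $\Rightarrow$ (i)} in the proof of Theorem \ref{thm:main_thm}'', and your decomposition --- the open deadlock-free lemma (Lemma \ref{lem:deadlock2}), an open analogue of Lemma \ref{lemma:negative_max} characterising minimal witnesses, the discrete fibration property of $\sigma$ for the forward map, a $(-)^*$-style construction for the inverse, and an analogue of Lemma \ref{lemma:carac_cov} for monotonicity --- is exactly that adaptation. The forward direction (the definition of $\lambda_\sigma$ on minimal witnesses) and your visibility characterisation of minimal witnesses are correct.

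However, the inverse-direction step fails as written. You ask for a unique $y^\star \supseteq y$ such that $\Psi_l^{-1}(y^\star, y_B) \in \conf{\CCC_B \circledast S}$ has all maximal events visible; but the canonical witness \emph{shrinks}, it does not grow: in Lemma \ref{lemma:unique_antecedant}, whose recipe you invoke, one has $x^* \subseteq x$, obtained by stripping maximal negative events. Supersets cannot work, because in $\CCC_B$ a \emph{negative} move $b$ of $B$ has its visible right-hand copy \emph{below} its synchronised copy, $(1,b)^- \imc (0,b)^+$; hence if $y$ has a maximal negative event $s$ with $\sigma s$ in the $B$ component, then in any witness whose $S$-component contains $s$ as a maximal event, the invisible synchronisation over $\sigma s$ is itself maximal, and enlarging $y$ can only repair this if $S$ happens to contain events above $s$. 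Concretely, take $A = 1$, $B = \ominus$, and $S$ a single negative event mapped to $\ominus$: for $y = S$ no $y^\star \supseteq y$ with the required property exists, whereas $y^\star = \emptyset \subseteq y$ works (the minimal witness is the lone visible right-copy event). The correct construction is $y^\star = [\{ s \in y \mid \pol(s) = +\ \text{or}\ \sigma s \in A^\perp \}] \subseteq y$: one strips only the maximal negative events \emph{mapped to $B$}, keeping negative events mapped to $A^\perp$ since those are visible --- note this is a genuine adaptation, as a verbatim application of Lemma \ref{lemma:unique_antecedant} (stripping \emph{all} maximal negative events) would also break $\lambda_\sigma \circ R_\sigma = \id$. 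With this correction one has $y \sqsubseteq_S y^\star$ and $\sigma y = y_A \parallel y_B$, so the fibration uniqueness returns $y$, and the rest of your argument (coverings, mutual inversion) goes through as you describe.
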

\begin{proof}
Straightforward adaptation of \emph{(iii) $\Rightarrow$ (i)} in
the proof of Theorem \ref{thm:main_thm}.
\end{proof}

We now show that the unitors $\lambda_\sigma$, $\rho_\sigma$ are natural in 
$\sigma$. In fact, it
will be helpful later on to prove here a slightly more general property: that the unitors acts
naturally with respect to generalized morphisms between strategies, that change the base game as
well.
In order to state it, first note that the
construction $A \mapsto \CCC_A$ on esps can be easily extended into a functor:
\[
\CCC : \ESP \to \ESP
\]
Indeed, for $f : A \to B$ a map of esps, we have $f^\perp \parallel f : A^\perp \parallel A \to
B^\perp \parallel B$ (using the obvious functorial action of $(-)^\perp$ and $\parallel$ on $\ESP$).
But $A^\perp \parallel A$ and $B^\perp \parallel B$ are respectively the sets of events of $\CCC_A$
and $\CCC_B$; and it is a simple verification that we do have $\CCC_f = f^\perp \parallel f : \CCC_A
\to \CCC_B$. Functoriality of the construction is clear. Using that, we state and prove the
following:

\begin{lem}
Let $\sigma_1 : S_1 \to A_1^\perp \parallel B_1$, $\sigma_2 : S_2 \to A_2^\perp \parallel B_2$, 
and $f : S_1 \to S_2$, $h : A_1 \to A_2, h' : B_1 \to B_2$ such that the following diagram commutes:
\[
\xymatrix{
S_1	\ar[r]^f
	\ar[d]_{\sigma_1}&
S_2	\ar[d]^{\sigma_2}\\
A_1^\perp \parallel B_1
	\ar[r]^-{h^\perp \parallel h'}&
A_2^\perp \parallel B_2
}
\]
Then, the following two diagrams commute as well:
\[
\xymatrix{
&&A_1^\perp \parallel B_1
	\ar[dd]_{h^\perp \parallel h'}\\
\CCC_{B_1} \odot S_1
	\ar[urr]^{\ccc_{B_1} \odot \sigma_1}
	\ar[r]_-{\lambda_{\sigma_1}}
	\ar[dd]^{\CCC_{h'}\odot f}&
S_1	\ar[ur]_{\sigma_1}
	\ar[dd]^f\\
&&A_2^\perp \parallel B_2\\
\CCC_{B_2} \odot S_2
	\ar[urr]^(.4){\ccc_{B_2}\odot \sigma_2\quad}
	\ar[r]_-{\lambda_{\sigma_2}}&
S_2	\ar[ur]_{\sigma_2}
}
\xymatrix{
&&A_1^\perp \parallel B_1
        \ar[dd]_{h^\perp \parallel h'}\\
S_1 \odot \CCC_{A_1}
        \ar[urr]^{\sigma_1 \odot \ccc_{A_1}}
        \ar[r]_-{\rho_{\sigma_1}}
        \ar[dd]^{f\odot \CCC_h}&
S_1     \ar[ur]_{\sigma_1}
        \ar[dd]^f\\
&&A_2^\perp \parallel B_2\\
S_2 \odot \CCC_{A_2}
        \ar[urr]^(.4){\sigma_2 \odot \ccc_{A_2}\quad}
        \ar[r]_-{\rho_{\sigma_2}}&
S_2     \ar[ur]_{\sigma_2}
}
\]
In particular (when $h, h'$ are identities), $\lambda_\sigma$ and $\rho_\sigma$ are natural in
$\sigma$.
\end{lem}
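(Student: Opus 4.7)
By Lemma \ref{lem:faithful} it suffices to verify the two inner squares commute on configurations; the $\rho$ case is entirely dual to $\lambda$, so the plan focuses on the $\lambda$ side. Before starting on the actual inner square, I would observe that the external pieces of the big diagram commute for free: the two small triangles involving the $\sigma_i$ hold because each $\lambda_{\sigma_i}$ is (by Lemma \ref{lem:unitors}) an isomorphism of strategies on $A_i^\perp \parallel B_i$; the right-hand parallelogram holds by hypothesis; and the outer quadrilateral formed by $\ccc_{B_i}\odot \sigma_i$, $\CCC_{h'}\odot f$, and $h^\perp \parallel h'$ is precisely the instance of Lemma \ref{lem:gen_odot_fonc} obtained from the naturality square $(h'^\perp \parallel h')\circ \ccc_{B_1} = \ccc_{B_2}\circ \CCC_{h'}$, which holds on the nose since $\CCC_{h'}$ is defined as $(h')^\perp \parallel h'$ on events.

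Now fix $x\in \conf{\CCC_{B_1}\odot S_1}$. By Lemma \ref{lem:deadlock2}, its minimal witness in $\conf{\CCC_{B_1}\circledast S_1}$ is encoded via $\Psi_l$ as a pair $(x_{S_1}, x^r_{B_1})$, with $\sigma_1\,x_{S_1} = x_{A_1}\parallel x^l_{B_1}$ and $x^r_{B_1} \sqsubseteq_{B_1} x^l_{B_1}$. By Lemma \ref{lem:unitors}, $\lambda_{\sigma_1}(x)$ is the unique $x'_{S_1}\sqsubseteq_{S_1} x_{S_1}$ with $\sigma_1\,x'_{S_1} = x_{A_1}\parallel x^r_{B_1}$, produced by the discrete fibration property of $\sigma_1$.

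Next I would chase $(\CCC_{h'}\odot f)(x)$ through the definition of composition. Unfolding the pullback-and-hide construction of $\CCC_{h'}\odot f$ from (the proof of) Lemma \ref{lem:gen_odot_fonc}, its minimal witness corresponds via $\Psi_l$ to the pair $(f\,x_{S_1},\, h'\,x^r_{B_1})$: the first component because $f$ acts on the $S$-side and $\CCC_{h'}$ restricts to $h'$ on the relevant copy of $B$, and minimality because $f$ preserves polarities so no new maximal invisible negatives are created (equivalently, the $(-)^*$ operation of Lemma \ref{lemma:unique_antecedant} commutes with $f$). Since $f$ and $h'$ are maps of esps preserving polarities, they preserve both $\subseteq^+$ and $\subseteq^-$, hence the Scott order; so $\sigma_2(f\,x_{S_1}) = h\,x_{A_1}\parallel h'\,x^l_{B_1}$ and $h'\,x^r_{B_1} \sqsubseteq_{B_2} h'\,x^l_{B_1}$, which is exactly the shape needed to apply $\lambda_{\sigma_2}$.

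Finally, $\lambda_{\sigma_2}$ on that configuration yields the unique $y'_{S_2}\sqsubseteq_{S_2} f\,x_{S_1}$ satisfying $\sigma_2\,y'_{S_2} = h\,x_{A_1}\parallel h'\,x^r_{B_1}$. But the candidate $f\,x'_{S_1}$ already satisfies $f\,x'_{S_1}\sqsubseteq_{S_2} f\,x_{S_1}$ (by preservation) and $\sigma_2(f\,x'_{S_1}) = (h^\perp\parallel h')(\sigma_1\,x'_{S_1}) = h\,x_{A_1}\parallel h'\,x^r_{B_1}$, so by the uniqueness clause of the discrete fibration property for $\sigma_2$ we conclude $y'_{S_2} = f\,x'_{S_1}$, i.e.\ $\lambda_{\sigma_2}\circ(\CCC_{h'}\odot f) = f\circ\lambda_{\sigma_1}$ on configurations. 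The main obstacle is the bookkeeping of the third step: correctly identifying the minimal witness of $(\CCC_{h'}\odot f)(x)$ requires carefully combining the pullback universal property of $\circledast$ (used in Lemma \ref{lem:gen_odot_fonc}) with the hiding-witness characterisation of $\odot$ (Proposition \ref{prop:hiding_carac}); once this is pinned down the rest is forced by uniqueness of discrete fibrations and by Lemma \ref{lem:faithful}. The diagram for $\rho$ is proved symmetrically using $\Psi_r$.
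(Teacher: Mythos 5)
Your proof has exactly the architecture of the paper's own argument: reduce everything to the front face (the other faces being the hypothesis, the definition of the unitors in Lemma~\ref{lem:unitors}, and Lemma~\ref{lem:gen_odot_fonc} applied to the hypothesis square together with the on-the-nose square $(h'^\perp \parallel h')\circ \ccc_{B_1} = \ccc_{B_2}\circ \CCC_{h'}$), then chase a configuration through its witness pair, and conclude by uniqueness of the discrete fibration property. The extra details you supply are correct and genuinely needed -- in particular that polarity-preserving maps preserve $\supseteq^-$ and $\subseteq^+$ and hence the Scott order, which the paper uses silently when it writes $f(x'_{S_1}) \sqsubseteq f(x_{S_1})$.

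The one real problem is the justification you give at the step you yourself flag as ``the main obstacle''. It is false that ``$f$ preserves polarities so no new maximal invisible negatives are created'', and false that $(-)^*$ commutes with $f$: maps of event structures need not preserve causality. Take $B_1 = B_2$ to consist of two \emph{concurrent} events $c^-, d^+$, take $h, h'$ identities, let $S_1$ be $s^- \imc s'^+$ over $(c,d)$, let $S_2$ be two concurrent events $t^-, t'^+$ over $(c,d)$, and let $f(s)=t$, $f(s')=t'$; all hypotheses hold ($\sigma_1,\sigma_2$ are receptive and courteous, and $f$ commutes with the projections). Then $\{s,s'\}^* = \{s,s'\}$ (the negative $s$ lies below the positive $s'$) while $(f\{s,s'\})^* = \{t'\}$: the image of the minimal witness $(\{s,s'\},\{c,d\})$ is the pair $(\{t,t'\},\{c,d\})$, which \emph{is} a witness of $(\CCC_{h'}\odot f)(x)$ but is \emph{not} minimal, since $t$ has become a maximal invisible negative event. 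In fairness, the paper's proof asserts the very same identification (``with unique witness $\Psi_r(f(x_{S_1}), h'(x^r_{B_1}))$'') with no justification at all, so your attempt is on a par with the printed proof, and the gap is repairable. By the remark the paper makes just after Lemma~\ref{lem:deadlock2} (and declares unused!), courtesy of $\sigma_1$ forces the source minimal witness to have no unforwarded invisible events; forwardedness is preserved by $f$ and $h'$, and a forwarded invisible \emph{positive} always has the visible right-copy event above it, so stripping the image witness down to the minimal one removes only \emph{negative} events. Hence the minimal witness's $S$-component $w$ satisfies $w \subseteq^- f x_{S_1}$, i.e.\ $f x_{S_1} \sqsubseteq_{S_2} w$; then $f x'_{S_1} \sqsubseteq f x_{S_1} \sqsubseteq w$ and $\sigma_2(f x'_{S_1}) = h\,x_{A_1} \parallel h'\,x^r_{B_1}$, so uniqueness of the fibration lifting below $w$ -- which is how $\lambda_{\sigma_2}$ is actually defined -- still yields $\lambda_{\sigma_2}((\CCC_{h'}\odot f)(x)) = f(x'_{S_1})$. (In the counterexample: $\{t,t'\}\sqsubseteq\{t'\}$, and $\lambda_{\sigma_2}$ indeed sends the two-event configuration to $\{t,t'\} = f\{s,s'\}$, so naturality holds.) In short: same approach as the paper, correct conclusion, but the minimality claim needs this Scott-order detour, not the commutation of $(-)^*$ with $f$.
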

\begin{proof}
Let us focus on the left hand side diagram, the other is symmetric. Of all the faces of the diagram,
the right hand side one is by hypothesis, the upper and lower are by definition of unitors in 
Lemma \ref{lem:unitors}, and the left hand side one is by Lemma \ref{lem:gen_odot_fonc}. It remains
to prove that the front face commutes.

Let $x \in \conf{\CCC_{B_1} \odot S_1}$, with unique witness $[x] = \Psi_r(x_{S_1}, x^r_{B_1})$, with
$\sigma_1 x_{S_1} = x_{A_1} \parallel x^l_{B_1}$ and $x^r_{B_1} \sqsubseteq x^l_{B_1}$. The left
unitor $\lambda_{\sigma_1}$ sends $x$ to the unique $x'_{S_1} \sqsubseteq x_{S_1}$ such that $\sigma
x'_{S_1} = x_{A_1} \parallel x^r_{B_1}$, whereas $\CCC_{h'} \odot f$ by definition sends it to
$(\CCC_{h'} \odot f)(x)$ with unique witness $\Psi_r(f(x_{S_1}), h'(x^r_{B_1}))$. But then,
$f(x'_{S_1}) \sqsubseteq f(x_{S_1})$ is such that $\sigma_2(f(x'_S)) = h(x_{A_1}) \parallel
h'(x^r_{B_1})$, and the unique such (by uniqueness of the discrete fibration property). Therefore,
$\lambda_{\sigma_2}((\CCC_{h'} \odot f)(x)) = f(x'_{S_1})$.
\end{proof}

And finally, using the description of their action we verify the coherence law for unitors.

\begin{lem}
For $\sigma : S \to A^\perp \parallel B$ and $\tau : T \to B^\perp \parallel C$,
the following diagram commutes.
\[
\xymatrix{
(T\odot \CCC_B)\odot S
        \ar[rr]^{\alpha_{\sigma, \ccc_B, \tau}}
        \ar[dr]_{\rho_{\tau}\odot S}&&
T\odot (\CCC_B \odot S)
        \ar[dl]^{T\odot \lambda_\sigma}\\
&T\odot S
}
\]
\end{lem}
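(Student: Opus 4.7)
My plan is to follow the same pattern used for the associator: check the equation at the level of configurations (which suffices by Lemma \ref{lem:faithful}), using the explicit descriptions of the maps involved.

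First, I would unfold configurations through the hiding layers. Take $x \in \conf{(T\odot \CCC_B)\odot S}$. Using the generalizations of Lemmas \ref{lemma:deadlock} and \ref{lem:deadlock2} to the ternary interaction $(T\circledast \CCC_B)\circledast S$ (or equivalently $T\circledast (\CCC_B\circledast S)$, related by $a_{\sigma,\ccc_B,\tau}$), the minimal witness of $x$ is encoded by a tuple $(x_S, x^r_B, x^l_B, x_T)$ with $\sigma\,x_S = x_A \parallel x^l_B$, $\tau\,x_T = x^r_B \parallel x_C$, and $x^r_B \sqsubseteq_B x^l_B$ (the mediating copycat in the middle). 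The visible events project to $x_A \parallel x_C$.

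Next, I would compute both sides of the triangle on this data. The path $\rho_\tau \odot S$ first strips the copycat on the $B$-side of $\tau$: by Lemma \ref{lem:unitors}, $\rho_\tau$ sends the pair $(x_T, x^l_B)$ (witnessing a configuration of $T\odot \CCC_B$) to the unique $x'_T \sqsubseteq x_T$ with $\tau\,x'_T = x^l_B \parallel x_C$ given by the discrete fibration property of $\tau$. The resulting configuration of $T\odot S$ is then the one witnessed by the pair $(x_S, x'_T)$ with $\sigma\,x_S = x_A \parallel x^l_B = (B$-component of $\tau\,x'_T)$. Symmetrically, the path $T\odot \lambda_\sigma \circ \alpha_{\sigma,\ccc_B,\tau}$ first re-associates (which on underlying tuples is the identity, since $a_{\sigma,\ccc_B,\tau}$ is induced by the universal property of the ternary interaction pullback and preserves projections), then applies $\lambda_\sigma$ to strip the $B$-side copycat of $\sigma$: this sends $(x^r_B, x_S)$ to the unique $x'_S \sqsubseteq x_S$ with $\sigma\,x'_S = x_A \parallel x^r_B$, yielding the configuration of $T\odot S$ witnessed by $(x'_S, x_T)$.

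Finally, I would check that these two witnesses represent the same configuration of $T\odot S$. Both $(x_S, x'_T)$ and $(x'_S, x_T)$ describe interactions of $\sigma$ and $\tau$ matching on the same $B$-events (namely, respectively $x^l_B$ and $x^r_B$, related by $\sqsubseteq_B$), and both project to $x_A \parallel x_C$ after hiding. The crucial observation is that the composition $T \odot S$, as a configuration space, does not record the internal $B$-synchronization beyond the minimal witness: applying the $(-)^*$ operation of Lemma \ref{lemma:unique_antecedant} (for the appropriate composite) to either witness produces the same minimal witness in $T\circledast S$, since in both cases the visible $A$- and $C$-projections agree and the discrete fibration property of $\sigma$ and $\tau$ uniquely determines the interior from these projections. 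Hence the two configurations coincide, proving the triangle identity.

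The main obstacle is the careful bookkeeping of which configurations correspond to which witnesses under the nested hidings; once the ternary analogue of Lemma \ref{lem:deadlock2} is made explicit, both sides reduce to uniqueness statements that follow from the discrete fibration property, and the coherence is forced. The work can be streamlined by first establishing the triangle at the interaction level (where the associator is literally an iso of ternary pullbacks commuting with all projections, so the triangle commutes on the nose) and then transferring along the partial-total universal property of the hiding maps, exactly as done for the pentagon above.
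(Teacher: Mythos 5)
Your decomposition of the minimal witness of $x \in \conf{(T\odot \CCC_B)\odot S}$ into $(x_S, x^r_B, x^l_B, x_T)$ with $\sigma\,x_S = x_A \parallel x^l_B$, $\tau\,x_T = x^r_B \parallel x_C$, $x^r_B \sqsubseteq_B x^l_B$, and your computation of the two paths (one producing the configuration of $T\odot S$ witnessed by $(x_S, x'_T)$, matching on $x^l_B$; the other the one witnessed by $(x'_S, x_T)$, matching on $x^r_B$) are correct and agree with the paper's setup. The gap is in your final identification step, which is the actual content of the lemma. You assert that the two results coincide because ``the visible $A$- and $C$-projections agree and the discrete fibration property of $\sigma$ and $\tau$ uniquely determines the interior from these projections''. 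This principle is false: a configuration of $T\odot S$ (equivalently its minimal witness in $T\circledast S$) is \emph{not} determined by its image in the game $A^\perp \parallel C$ --- for nondeterministic strategies, distinct configurations of the composition with distinct internal $B$-synchronisations project to the same configuration of the game. Moreover, Lemma \ref{lemma:unique_antecedant} and its $(-)^*$ operation are stated for the closed interaction with copycat, not for a general composite $T\circledast S$, so invoking it ``for the appropriate composite'' is not available without proof. What is missing is the key observation on which the paper's proof turns: because the witness of $x$ is \emph{minimal} (all its maximal events visible), one in fact has $x^l_B = x^r_B$. Indeed any $b \in x^l_B \setminus x^r_B$ is positive (Lemma \ref{lemma:scott}); its only copycat-successor $(1,b)$ is absent since $b \notin x^r_B$, and by courtesy of $\sigma$ any successor through $S$ again synchronises $\sigma$ with the left copy of $B$ on an event that is again in $x^l_B \setminus x^r_B$ (a successor in $x^r_B$ would force $b \in x^r_B$ by down-closure), so $b$ lies below no visible event, contradicting minimality; symmetrically for $x^r_B \setminus x^l_B$ using courtesy of $\tau$. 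Once $x^l_B = x^r_B$ is established, uniqueness in the discrete fibration property gives $x'_T = x_T$ and $x'_S = x_S$, so both paths yield literally the same witness $(x_S, x_T)$ and the triangle commutes; without it, comparing $(x_S, x'_T)$ and $(x'_S, x_T)$ requires exactly this kind of argument, which your proposal does not supply.

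Your suggested streamlining --- proving the triangle at the interaction level and transferring along the hiding maps, ``exactly as done for the pentagon'' --- also does not go through, and it is instructive to see why. The associator $a_{\sigma, \ccc_B, \tau}$ exists at the level of interactions as an isomorphism of ternary pullbacks commuting with all projections, which is what makes the pentagon-style transfer possible. The unitors do not: $\rho_\tau$ and $\lambda_\sigma$ are defined only \emph{after} hiding, via the discrete fibration property, and there is no map $(T\circledast \CCC_B)\circledast S \to T\circledast S$ commuting with the hiding maps through which they would factor --- the desynchronised copycat states (precisely those with $x^l_B \neq x^r_B$) have no sensible image. This is why the paper proves the unitor coherence by a direct computation on configurations rather than by the transfer argument used for associativity.
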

\begin{proof}
Let $x \in \conf{(T\odot \CCC_B)\odot S}$. Necessarily, it has a witness $\wit(x) \in \conf{(T\circledast \CCC_B)\circledast S}$.
By characterisation of pullbacks, it corresponds to three configurations $x_S \parallel x^r_B \parallel x_C$,
$x_A \parallel x_B^l \parallel x_B^r \parallel x_C$, and $x_A \parallel x_B^l \parallel x_T$ such that
$\sigma x_S = x_A \parallel x_B^l$, $x_B^r \sqsubseteq x_B^l$ (regarded as configurations of $B$), and
$\tau x_T = x_B^r \parallel x_C$. Moreover, the induced order on triples is secured, and its maximal
elements are visible. But this implies that actually $x_B^l = x_B^r$ -- it is easy to show that if
(non-visible) $b \in x_B^l$ is not in $x_B^r$, then it cannot be below a visible event. From that
it follows that both paths alongside the triangle above map $x$ to (the configuration of $T\odot S$
represented by) $x_S \parallel x_C$ and $x_A \parallel x_T$.
\end{proof}

We have finished the proof that $\CG$ is a bicategory.

\section{A compact-closed (bi)category}
\label{sec:compact}
In this section, we show that similarly to Joyal's category of Conway 
games, our bicategory of concurrent games has a compact closed structure, a structure
that is central in the applications of our framework to game semantics of programming
languages. 

Recall that a compact closed category is a symmetric monoidal category, where each object 
$A$ has a \emph{dual} $A^*$, which is related to $A$ via two morphisms:

\[
\eta_A : 1 \profto A^* \tensor A \hspace{80pt} \epsilon_A : A\tensor A^* \profto 1
\]
where $1$ is the unit of the tensor (in our concrete case it is the empty game). These
morphisms have to obey two laws that are best represented in the language of string diagrams:

\begin{center}
\scalebox{.6}{
\begin{tikzpicture}[]
\useasboundingbox (-0.5,-0.5) rectangle (17.5,4.5);
\draw (5.00,4.00) -- (4.76,3.97) -- (4.52,3.94) -- (4.28,3.91) -- (4.05,3.88) -- (3.83,3.85) -- (3.61,3.82) -- (3.39,3.78) -- (3.19,3.74) -- (3.00,3.70) -- (2.82,3.66) -- (2.66,3.61) -- (2.51,3.56) -- (2.37,3.51) -- (2.25,3.45) -- (2.16,3.39) -- (2.08,3.32) -- (2.02,3.25) -- (1.99,3.17) -- (1.98,3.09) -- (2.00,3.00);
\draw (2.00,3.00) -- (2.04,2.91) -- (2.10,2.82) -- (2.17,2.73) -- (2.26,2.63) -- (2.36,2.53) -- (2.48,2.43) -- (2.60,2.32) -- (2.73,2.22) -- (2.86,2.11) -- (3.00,2.00) -- (3.14,1.89) -- (3.27,1.78) -- (3.40,1.68) -- (3.52,1.57) -- (3.64,1.47) -- (3.74,1.37) -- (3.83,1.27) -- (3.90,1.18) -- (3.96,1.09) -- (4.00,1.00);
\draw (4.00,1.00) -- (4.02,0.91) -- (4.01,0.83) -- (3.98,0.75) -- (3.92,0.68) -- (3.84,0.61) -- (3.75,0.55) -- (3.63,0.49) -- (3.49,0.44) -- (3.34,0.39) -- (3.18,0.34) -- (3.00,0.30) -- (2.81,0.26) -- (2.61,0.22) -- (2.39,0.18) -- (2.17,0.15) -- (1.95,0.12) -- (1.72,0.09) -- (1.48,0.06) -- (1.24,0.03) -- (1.00,0.00);
\draw (8.00,4.00) -- (8.24,3.97) -- (8.48,3.94) -- (8.72,3.91) -- (8.95,3.88) -- (9.17,3.85) -- (9.39,3.82) -- (9.61,3.78) -- (9.81,3.74) -- (10.00,3.70) -- (10.18,3.66) -- (10.34,3.61) -- (10.49,3.56) -- (10.63,3.51) -- (10.75,3.45) -- (10.84,3.39) -- (10.92,3.32) -- (10.98,3.25) -- (11.01,3.17) -- (11.02,3.09) -- (11.00,3.00);
\draw (11.00,3.00) -- (10.96,2.91) -- (10.90,2.82) -- (10.83,2.73) -- (10.74,2.63) -- (10.64,2.53) -- (10.52,2.43) -- (10.40,2.32) -- (10.27,2.22) -- (10.14,2.11) -- (10.00,2.00) -- (9.86,1.89) -- (9.73,1.78) -- (9.60,1.68) -- (9.48,1.57) -- (9.36,1.47) -- (9.26,1.37) -- (9.17,1.27) -- (9.10,1.18) -- (9.04,1.09) -- (9.00,1.00);
\draw (9.00,1.00) -- (8.98,0.91) -- (8.99,0.83) -- (9.02,0.75) -- (9.08,0.68) -- (9.16,0.61) -- (9.25,0.55) -- (9.37,0.49) -- (9.51,0.44) -- (9.66,0.39) -- (9.82,0.34) -- (10.00,0.30) -- (10.19,0.26) -- (10.39,0.22) -- (10.61,0.18) -- (10.83,0.15) -- (11.05,0.12) -- (11.28,0.09) -- (11.52,0.06) -- (11.76,0.03) -- (12.00,0.00);
\draw (15.00,2.00) -- (15.05,2.00) -- (15.10,2.00) -- (15.15,2.00) -- (15.20,2.00) -- (15.25,2.00) -- (15.30,2.00) -- (15.35,2.00) -- (15.40,2.00) -- (15.45,2.00) -- (15.50,2.00) -- (15.55,2.00) -- (15.60,2.00) -- (15.65,2.00) -- (15.70,2.00) -- (15.75,2.00) -- (15.80,2.00) -- (15.85,2.00) -- (15.90,2.00) -- (15.95,2.00) -- (16.00,2.00);
\draw (16.00,2.00) -- (16.05,2.00) -- (16.10,2.00) -- (16.15,2.00) -- (16.20,2.00) -- (16.25,2.00) -- (16.30,2.00) -- (16.35,2.00) -- (16.40,2.00) -- (16.45,2.00) -- (16.50,2.00) -- (16.55,2.00) -- (16.60,2.00) -- (16.65,2.00) -- (16.70,2.00) -- (16.75,2.00) -- (16.80,2.00) -- (16.85,2.00) -- (16.90,2.00) -- (16.95,2.00) -- (17.00,2.00);
\filldraw[fill=white] (2.00,3.00) ellipse (0.80cm and 0.50cm);
\filldraw[fill=white] (11.00,3.00) ellipse (0.80cm and 0.50cm);
\filldraw[fill=white] (4.00,1.00) ellipse (0.80cm and 0.50cm);
\filldraw[fill=white] (9.00,1.00) ellipse (0.80cm and 0.50cm);
\draw (5.50,4.00) node{$A$};
\draw (7.50,4.00) node{$A$};
\draw (2.00,3.00) node{$\eta_{A^*}$};
\draw (11.00,3.00) node{$\epsilon_A$};
\draw (6.50,2.00) node{$=$};
\draw (13.50,2.00) node{$=$};
\draw (14.50,2.00) node{$A$};
\draw (17.50,2.00) node{$A$};
\draw (4.00,1.00) node{$\epsilon_{A^*}$};
\draw (9.00,1.00) node{$\eta_A$};
\draw (0.50,0.00) node{$A$};
\draw (12.50,0.00) node{$A$};
\end{tikzpicture}
}
\end{center}

Compact closed categories play an important role in the background in semantics: the equations of compact
closed categories are mirrored, \emph{e.g.} in the reduction rules of proof nets and in the 
adjunction laws ($\beta$ and $\eta$-conversion) of cartesian closed or symmetric 
closed monoidal categories.
In fact, any compact closed category is symmetric closed monoidal (more 
precisely, $*$-autonomous, and a model of MLL \cite{girard1987linear}):
setting $A\multimap B = A^* \tensor B$, we have the adjunction $A\tensor - \dashv A \multimap -$.
In short, compact closed categories form the backbone of an equational presentation of the dynamics
of linear higher-order computation.

But unlike Conway games, $\CG$ is a \emph{bicategory}.
In fact, we believe that it gives an example of a \emph{compact closed bicategory}, as defined by Kelly \cite{Kelly} and detailed by Stay \cite{1301.1053}.
However, the precise definition of a compact closed bicategory is 
rather intimidating. It might be possible to deduce the bicategorical
compact closed structure of $\CG$ from that of the bicategory of profunctors \cite{1301.1053}, but
{we do not do so here. Although in subsequent work we occasionally rely on the algebraic
laws for composition pre-quotient, the
literature and body of work that we need to connect to when setting up our game semantics (for
instance, models of linear logic \cite{panorama}) does not exploit this bicategorical
structure.}
So, we only check that the \emph{quotiented category} is compact closed.

By abuse of notations, from now on we will use the same notation $\CG$ for the quotiented category instead of the bicategory.
Regarded as a category, $\CG$ has esps as objects, and as morphisms strategies $\sigma : S \to A^\perp \parallel B$ up to
isomorphism. In the rest of this section, we check the components of a compact closed category.

\subsection{The bifunctor}

First, we define a bifunctor $\tensor : \CG^2 \to \CG$. On objects, $A\tensor B$ is simply defined as $A \parallel B$. On morphisms,
for $\sigma_1 : S_1 \to A_1^\perp \parallel B_1$ and $\sigma_2 : S_2 \to A_2^\perp \parallel B_2$, we define
\[
\sigma_1 \tensor \sigma_2 = 
\xymatrix@C=50pt{
S_1 \parallel S_2
        \ar[r]^-{\sigma_1 \parallel \sigma_2~~~~~~~~~}&
(A_1^\perp \parallel B_1) \parallel (A_2^\perp \parallel B_2)
        \ar[r]^-{\gamma_{A_1^\perp, B_1, A_2^\perp, B_2}}&
(A_1 \parallel A_2)^\perp \parallel (B_1 \parallel B_2)
}
\]
where $\gamma_{A, B, C, D} : (A\parallel B) \parallel (C \parallel D) \to (A\parallel C) \parallel (B \parallel D)$ is the
obvious isomorphism of esps. We show that this operation is a bifunctor. Firstly, it preserves the identity.

\begin{prop}
For any esp $A$, we have
\[
\ccc_{A\tensor B} \iso \ccc_A \tensor \ccc_B
\]
\end{prop}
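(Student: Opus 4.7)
The plan is to produce an explicit isomorphism of esps between $\CCC_{A\parallel B}$ and $\CCC_A \parallel \CCC_B$ commuting with the labellings (up to the reorganizing iso $\gamma$), which by definition of $\CG$ as the quotient gives the claimed equality of morphisms.

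First, I would exhibit the candidate bijection on events. By unfolding the definitions, the events of $\ccc_A \tensor \ccc_B$ are those of $(A^\perp \parallel A)\parallel (B^\perp \parallel B)$, and those of $\ccc_{A\parallel B}$ are those of $(A\parallel B)^\perp \parallel (A\parallel B)$. These two sets are canonically in bijection via $\gamma_{A^\perp,A,B^\perp,B}$, which is even an isomorphism of esps between the underlying simple parallel compositions $(A^\perp \parallel A)\parallel (B^\perp \parallel B)$ and $(A\parallel B)^\perp \parallel (A\parallel B)$. I would take this $\gamma$ as my candidate.

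Second, I would check that $\gamma$ transports the additional copycat causality correctly. Both copycat event structures are obtained from their underlying simple parallel compositions by adding, for each positive event $(i,c)^+$, an edge from $(1-i,c)$ to $(i,c)$. In $\ccc_A \parallel \ccc_B$ these edges live entirely inside the $A$-component or entirely inside the $B$-component, since a game event of $A\parallel B$ belongs to exactly one of the two sides; and in $\ccc_{A\parallel B}$ the generator $(1-i,c)\imc(i,c)$ only links two events projecting to the same side of $A\parallel B$. Hence $\gamma$ maps generators to generators and preserves the transitive closure, giving a causality-preserving bijection. For consistency, both event structures declare $X$ consistent iff its down-closure (with respect to the respective causality) is consistent in the ambient simple parallel composition; since $\gamma$ respects both causalities and the product consistency, consistency is preserved too. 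Polarities are preserved by construction.

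Third, I would observe that the resulting iso $\CCC_A \parallel \CCC_B \cong \CCC_{A \parallel B}$ fits into a commuting diagram over $(A\parallel B)^\perp \parallel (A \parallel B)$: on one side we have $\gamma\circ(\ccc_A \parallel \ccc_B) = \ccc_A \tensor \ccc_B$ and on the other $\ccc_{A\parallel B}$, both being identity maps on the shared set of events after transport. This makes it a $2$-cell isomorphism of strategies in the bicategory $\CG$, hence an equality in the quotiented category. There is no real obstacle here: the only thing to be careful about is bookkeeping with $\gamma$ and with the tagging conventions of $\parallel$, ensuring that the explicit description of covers $(1-i,c)\imc(i,c)$ in $\CCC_{A\parallel B}$ matches precisely the union of the corresponding covers inside the two copycat components after transport.
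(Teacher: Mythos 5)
Your proof is correct and follows essentially the same route as the paper: both take the canonical reorganizing isomorphism $\gamma$ between $(A\parallel B)^\perp \parallel (A\parallel B)$ and $(A^\perp \parallel A)\parallel(B^\perp\parallel B)$ and observe that it is also an isomorphism of the copycat event structures commuting with the labellings. You merely spell out the verification (generators of the copycat causality stay within one component, consistency is determined by down-closure in the ambient parallel composition) that the paper dismisses as obvious.
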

\begin{proof}
We have the isomorphism
\[
\gamma_{A^\perp, B^\perp, A, B} : (A^\perp \parallel B^\perp) \parallel (A \parallel B) \to (A^\perp \parallel A) \parallel (B^\perp \parallel B)
\]
which can also be typed as $\gamma_{A^\perp ,B^\perp, A, B} : \CCC_{A\tensor B} \to \CCC_A \parallel \CCC_B$, which obviously commutes with
the projections to the game.
\end{proof}

Secondly, it preserves composition.

\begin{prop}
Let:
\[
\begin{array}{rcrclcrcrcl}
\sigma_1 &:& S_1 &\to& A_1^\perp \parallel B_1 &~~~~~~~~~~& \tau_1 &:& T_1 &\to& B_1^\perp \parallel C_1\\
\sigma_2 &:& S_2 &\to& A_2^\perp \parallel B_2 &~~~~~~~~~~& \tau_2 &:& T_2 &\to& B_2^\perp \parallel C_2
\end{array}
\]
Then,
\[
(\tau_1 \odot \sigma_1) \tensor (\tau_2 \odot \sigma_2) \iso (\tau_1 \tensor \tau_2) \odot (\sigma_1 \tensor \sigma_2)
\]
\end{prop}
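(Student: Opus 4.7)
The plan is to establish the isomorphism at the level of interactions first, and then propagate it through hiding. The key intuition is that in the tensor $\sigma_1 \tensor \sigma_2$, the two components are entirely non-interacting: they play on disjoint portions of the game. As a result, when composing with $\tau_1 \tensor \tau_2$, the interaction over $B_1 \parallel B_2$ splits cleanly into the independent interactions over $B_1$ and $B_2$, and similarly for hiding.

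First, I would show that at the level of interactions there is a canonical isomorphism
\[
(T_1 \tensor T_2) \circledast (S_1 \tensor S_2) \;\iso\; (T_1 \circledast S_1) \parallel (T_2 \circledast S_2),
\]
commuting with the projections to $A_1 \parallel B_1 \parallel C_1 \parallel A_2 \parallel B_2 \parallel C_2$ (up to the appropriate reshuffling $\gamma$). This follows from Lemma~\ref{lem:pullback}: both sides are pullbacks of (reordered versions of) the same cospan, using that parallel composition of event structures preserves pullbacks (which is easy to check directly, since configurations of a parallel composition are pairs of configurations, and a bijection between parallel compositions is secured iff each component bijection is). Alternatively, one can apply Lemma~\ref{lemma:deadlock}, since the tensor of strategies satisfies its hypothesis trivially on the $A_i^\perp$ components, and observe that the set of matched pairs of configurations decomposes into a product.

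Second, I would show that hiding commutes with parallel composition: for event structures $E_1, E_2$ with visible subsets $V_1 \subseteq E_1$, $V_2 \subseteq E_2$, there is a canonical isomorphism $(E_1 \parallel E_2) \proj (V_1 \parallel V_2) \iso (E_1 \proj V_1) \parallel (E_2 \proj V_2)$. This is immediate from the definition of projection, since causality and consistency in a parallel composition are determined component-wise. Equivalently (and more modularly for what follows), I would use Proposition~\ref{prop:hiding_carac}: the parallel composition of two hiding maps is a hiding map, with hiding witness $\wit_{f_1 \parallel f_2}(x_1 \parallel x_2) = \wit_{f_1}(x_1) \parallel \wit_{f_2}(x_2)$.

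Combining these two ingredients yields the desired isomorphism between $(T_1 \tensor T_2) \odot (S_1 \tensor S_2)$ and $(T_1 \odot S_1) \tensor (T_2 \odot S_2)$: starting from the interaction isomorphism, precompose and postcompose with the hiding maps on both sides, and use the partial--total universal property (Lemma~\ref{lem:partotup}) to obtain a unique induced isomorphism on the hidden event structures. A routine diagram chase, analogous to the construction of the associator $\alpha_{\sigma, \tau, \rho}$ in Section~\ref{sec:assoc}, shows that this isomorphism commutes with the labelling maps to $(A_1 \parallel A_2)^\perp \parallel (C_1 \parallel C_2)$, using the reshuffling isomorphisms $\gamma$. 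The only genuine bookkeeping obstacle is keeping track of these reshufflings $\gamma$ carefully — conceptually nothing nontrivial happens since the two halves never interact, but one must be careful with the symmetric monoidal coherence to ensure the final diagram commutes strictly in $\ESP$ (not just up to a further iso).
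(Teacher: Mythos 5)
Your proposal is correct and follows essentially the same route as the paper's proof: establish the isomorphism $(T_1 \circledast S_1) \parallel (T_2 \circledast S_2) \iso (T_1 \parallel T_2) \circledast (S_1 \parallel S_2)$ at the level of interactions via the universal property of pullbacks (parallel composition of pullback squares being a pullback square), then descend through hiding using Proposition~\ref{prop:hiding_carac} together with the fact that parallel composition of maps with hiding witnesses has a (componentwise) hiding witness. The bookkeeping with the reshuffling isomorphisms $\gamma$ that you flag is handled in the paper by the explicit maps $\gamma$ and $\delta$ relating the two pullback cospans, exactly as you anticipate.
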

\begin{proof}
We start by proving it for interactions. As the parallel composition of pullback squares is a pullback square, we have two
pullbacks related by isomorphisms:
\[
\xymatrix@C=0pt@R=15pt{
&(T_1 \circledast S_1) \parallel (T_2 \circledast S_2)
        \ar[dl]
        \ar[dr]
        \pb{270}\\
(S_1 \parallel C_1) \parallel (S_2 \parallel C_2)
        \ar[dr]^(.55){\qquad(\sigma \parallel C_1) \parallel (\sigma_2 \parallel C_2)}
        \ar@{.>}@/^4pc/[dddd]^{\gamma_{S_1, C_1, S_2, C_2}}&&
(A_1 \parallel T_1) \parallel (A_2 \parallel T_2)
        \ar[dl]^(.4){\qquad(A_1 \parallel \tau_1) \parallel (A_2 \parallel \tau_2)}
        \ar@{.>}@/^4pc/[dddd]_{\gamma_{A_1, T_2, A_2, T_2}}\\
& (A_1 \parallel B_1 \parallel C_1) \parallel (A_2 \parallel B_2 \parallel C_2)
        \ar@{.>}@/^4pc/[dddd]^{\delta}\\\\
&(T_1 \parallel T_2) \circledast (S_1 \parallel S_2)
        \ar[dl]
        \ar[dr]
        \pb{270}\\
(S_1 \parallel S_2) \parallel (C_1 \parallel C_2)
        \ar[dr]^(.55){\qquad(\sigma_1 \tensor \sigma_2) \parallel (C_1 \parallel C_2)}&&
(A_1 \parallel A_2) \parallel (T_1 \parallel T_2)
        \ar[dl]^(.4){\qquad(A_1 \parallel A_2) \parallel (\tau_1 \tensor \tau_2)}\\
&(A_1 \parallel A_2) \parallel (B_1 \parallel B_2) \parallel (C_1 \parallel C_2)
}
\]
where $\delta$ is the obvious map. By universal property of the pullback that gives an isomorphism:
\[
\gamma' : (T_1 \circledast S_1) \parallel (T_2 \circledast S_2) \iso (T_1 \parallel T_2) \circledast (S_1 \parallel S_2)
\]
which commutes (up to $\gamma_{A_1, C_1, A_2, C_2}$) with the hiding maps $\hid_{\sigma_1, \tau_1} \parallel \hid_{\sigma_2, \tau_2}$ and
$\hid_{\sigma_1 \tensor  \sigma_2 , \tau_1\tensor \tau_2}$, so using 
Proposition \ref{prop:hiding_carac} and the easy fact that maps with hiding 
witnesses are stable by parallel composition, it follows that $\gamma'$ corresponds to a unique isomorphism:
\[
(T_1 \odot S_1) \parallel (T_2 \odot S_2) \iso (T_1 \parallel T_2) \odot (S_1 \parallel S_2)
\]
between strategies $(\tau_1 \odot \sigma_1) \tensor (\tau_2 \odot \sigma_2)$ and $(\tau_1 \tensor \tau_2) \odot (\sigma_1 \tensor \sigma_2)$.
\end{proof}

\subsection{Lifting and symmetric monoidal structure of $\CG$}

The strategies serving as structural morphisms for the symmetric closed 
monoidal structure are very simple variants of copycat $\ccc_A : A \profto A$. In 
order to construct the symmetric monoidal structure
of $\CG$, we describe a systematic way of generating such morphisms from more elementary maps
of esps.

\begin{defi}
Let $f : A \to B$ be a receptive, courteous map of esps\footnote{This means that, technically, $f$ is a
strategy on $B$ -- though we are not thinking of it that way.}. Then, the map:
\[
\begin{array}{rcrcl}
\lift{f} &:& \CCC_A &\to & A^\perp \parallel B\\
&& a &\mapsto& (A^\perp \parallel f)\circ \ccc_A(a)
\end{array}
\]
is a strategy called the \textbf{lifting} of $f$. Likewise, if $f: B^\perp \to A^\perp$ is receptive and courteous,
we define its \textbf{co-lifting}:
\[
\begin{array}{rcrcl}
\lift{f}^\perp &:& \CCC_B &\to& A^\perp \parallel B\\
&& c &\mapsto& (f \parallel B)\circ \ccc_B(c)
\end{array}
\]
\end{defi}

The fact that they are strategies follows from the fact that courteous receptive maps are stable under composition.

The following key lemma links composition of strategies with lifted maps and composition of the corresponding
maps in $\ES$.

\begin{lem}
Let $f: B \to C$ be a receptive courteous map of esps, and $\sigma : S \to A^\perp \parallel B$ be a strategy. Then,
the unitor $\lambda_\sigma : \CCC_B \odot S \to S$ is an isomorphism between
\[
\begin{array}{rcrcl}
\lift{f}\odot \sigma &:& \CCC_B \odot S &\to& A^\perp \parallel C\\
(A^\perp \parallel f)\circ \sigma &:& S &\to& A^\perp \parallel C
\end{array}
\]
Likewise, for $f: B^\perp \to A^\perp$ receptive courteous and $\sigma : S \to B^\perp \parallel C$ a
strategy, $\rho_\sigma$ is an isomorphism between:
\[
\begin{array}{rcrcl}
\sigma \odot \lift{f}^\perp&:& S\odot \CCC_B &\to& A^\perp \parallel C\\
(f \parallel C)\circ \sigma &:& S &\to& A^\perp \parallel C
\end{array}
\]
\label{lem:lifting_key}
\end{lem}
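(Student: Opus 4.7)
The plan is to reduce the lemma to two facts already proved in the excerpt: the functoriality of $\odot$ with respect to generalised morphisms (Lemma \ref{lem:gen_odot_fonc}) and the naturality property of the unitors established just before the coherence lemma. In particular, no new diagram chase on pullbacks or hiding witnesses will be required: the result should follow purely from manipulating commuting diagrams.

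For the first half, the crucial observation is that by definition of the lifting we have the commuting square
\[
\xymatrix{
\CCC_B  \ar[r]^{\id_{\CCC_B}}
        \ar[d]_{\ccc_B}&
\CCC_B  \ar[d]^{\lift{f}}\\
B^\perp \parallel B
        \ar[r]_{B^\perp \parallel f}&
B^\perp \parallel C
}
\]
which expresses $\lift{f}$ as a generalised morphism from $\ccc_B$ to itself with base change $(\id_B, f)$. Pairing this with the identity generalised morphism on $\sigma$ and applying Lemma \ref{lem:gen_odot_fonc} with $h_1 = \id_A, h_2 = \id_B, h_3 = f$, we obtain
\[
\lift{f}\odot \sigma \;=\; (A^\perp \parallel f)\circ (\ccc_B \odot \sigma)
\]
as maps $\CCC_B \odot S \to A^\perp \parallel C$.

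For the second half, we apply the naturality lemma for the left unitor with $h, h'$ both identities and $f = \id_S$: the upper triangle of its statement specialises exactly to
\[
\sigma \circ \lambda_\sigma \;=\; \ccc_B \odot \sigma.
\]
Combining the two identities gives $\lift{f}\odot \sigma = (A^\perp \parallel f)\circ \sigma \circ \lambda_\sigma$, which is precisely the commuting triangle witnessing that $\lambda_\sigma : \CCC_B \odot S \to S$ is an isomorphism of strategies from $\lift{f}\odot \sigma$ to $(A^\perp \parallel f)\circ \sigma$. The co-lifting case is entirely symmetric: apply Lemma \ref{lem:gen_odot_fonc} to the analogous square
\[
\xymatrix{
\CCC_B  \ar[r]^{\id_{\CCC_B}}
        \ar[d]_{\ccc_B}&
\CCC_B  \ar[d]^{\lift{f}^\perp}\\
B^\perp \parallel B
        \ar[r]_{f \parallel B}&
A^\perp \parallel B
}
\]
with base change $(f^\perp, \id_B)$, and combine with the corresponding naturality identity $\sigma \circ \rho_\sigma = \sigma \odot \ccc_B$ for the right unitor.

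I do not expect any real obstacle: the content of the lemma is essentially a bookkeeping exercise, and the only subtlety is to ensure the base changes are placed correctly when invoking Lemma \ref{lem:gen_odot_fonc} (in particular that $f$ sits in the output component of $B$ for the lifting, and dually in the input component for the co-lifting, so that the two squares above type-check). Once the base changes are set up properly, everything else is immediate from the cited lemmas.
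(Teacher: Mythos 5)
Your proposal is correct and takes essentially the same route as the paper's own proof: both rest on applying Lemma \ref{lem:gen_odot_fonc} to the square expressing $\lift{f} = (B^\perp \parallel f)\circ \ccc_B$ (paired with the identity square on $\sigma$) to get $\lift{f}\odot\sigma = (A^\perp \parallel f)\circ(\ccc_B\odot\sigma)$, and then compose with the unitor isomorphism, the co-lifting case being symmetric. The only cosmetic difference is that you source the identity $\sigma\circ\lambda_\sigma = \ccc_B\odot\sigma$ from the naturality lemma specialised to identities, whereas the paper reads it off directly as part of the unitor $\lambda_\sigma$ being an isomorphism of strategies (Lemma \ref{lem:unitors}); this changes nothing of substance.
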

\begin{proof}
By definition, the following two diagrams commute:
\[
\xymatrix{
S	\ar[r]^{S}
	\ar[d]^{\sigma}&
S	\ar[d]^{\sigma}\\
A^\perp \parallel B
	\ar[r]^{A^\perp \parallel B}&
A^\perp \parallel B
}
~~~~~~~~~~
\xymatrix{
\CCC_B	\ar[r]^{\CCC_B}
	\ar[d]^{\ccc_B}&
\CCC_B	\ar[d]^{\lift{f}}\\
B^\perp \parallel B
	\ar[r]^{B^\perp \parallel f}&
B^\perp \parallel C
}
\]
Therefore, by Lemma \ref{lem:gen_odot_fonc}, it follows that the following diagram commutes:
\[
\xymatrix{
\CCC_B\odot S
	\ar[r]^{\CCC_B\odot S}
	\ar[d]^{\ccc_B \odot \sigma}&
\CCC_B\odot S
	\ar[d]^{\lift{f}\odot \sigma}\\
A^\perp \parallel B
	\ar[r]^{A^\perp \parallel f}&
A^\perp \parallel C
}
\]
Combined with the isomorphism $\ccc_B \odot \sigma \iso \sigma$, this concludes the proof. The other case is symmetric.
\end{proof}

From the lemma above it immediately follows that lifting is functorial:

\begin{lem}
Let $f : A \to B$ and $g : B \to C$ be receptive courteous maps, then we have an isomorphism:
\[
\lift{g}\odot \lift{f} \iso \lift{g\circ f}
\]
\label{lem:lifting_functorial}
\end{lem}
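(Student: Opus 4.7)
The plan is to reduce this to a direct application of Lemma~\ref{lem:lifting_key} followed by an elementary computation in $\ESP$. By definition, $\lift{f} = (A^\perp \parallel f) \circ \ccc_A : \CCC_A \to A^\perp \parallel B$ is a strategy, since $f$ being receptive and courteous makes $\lift{f}$ a strategy as remarked just after the definition of lifting. Applying Lemma~\ref{lem:lifting_key} with the receptive courteous map $g : B \to C$ and the strategy $\sigma = \lift{f}$, the unitor $\lambda_{\lift{f}}$ provides an isomorphism of strategies on $A^\perp \parallel C$:
\[
\lift{g} \odot \lift{f} \iso (A^\perp \parallel g) \circ \lift{f}.
\]

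It then suffices to observe that the right-hand side coincides \emph{on the nose} with $\lift{g \circ f}$. Unfolding $\lift{f}$ and using functoriality of $A^\perp \parallel -$ in $\ESP$ we compute:
\[
(A^\perp \parallel g) \circ \lift{f} = (A^\perp \parallel g) \circ (A^\perp \parallel f) \circ \ccc_A = (A^\perp \parallel (g \circ f)) \circ \ccc_A = \lift{g \circ f}.
\]
Combining this equality with the previous display yields the desired isomorphism $\lift{g} \odot \lift{f} \iso \lift{g \circ f}$.

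There is essentially no obstacle beyond sanity-checking the hypotheses of the key lemma: $g$ is receptive and courteous by assumption, $\lift{f}$ is a strategy, and the resulting $\lift{g \circ f}$ is a strategy since receptive courteous maps compose (so $g \circ f : A \to C$ is itself receptive and courteous). The heart of the proof is really Lemma~\ref{lem:lifting_key}; once one has it, functoriality of lifting is a one-line consequence.
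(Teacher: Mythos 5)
Your proof is correct and follows exactly the paper's route: the paper's own proof is the one-liner ``Immediate consequence of Lemma~\ref{lem:lifting_key}'', and your argument simply spells out that consequence -- instantiating the key lemma with $\sigma = \lift{f}$ and then computing $(A^\perp \parallel g) \circ (A^\perp \parallel f) \circ \ccc_A = (A^\perp \parallel (g\circ f)) \circ \ccc_A = \lift{g\circ f}$ by functoriality of $\parallel$.
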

\begin{proof}
Immediate consequence of Lemma \ref{lem:lifting_key}.
\end{proof}

Using this, we can lift the symmetric closed monoidal structure of $\ES$ to 
$\CG$. In particular,
there are natural isomorphisms in $\ES$ which are componentwise receptive and courteous, and so are their inverses.

\[
\begin{array}{rcrcl}
\rho_A &:& A\parallel 1 &\to & A\\
\lambda_A &:& 1\parallel A &\to& A\\
s_{A, B} &:& A\parallel B &\to& B \parallel A\\
\alpha_{A, B, C} &:& (A \parallel B) \parallel C &\to& A \parallel (B\parallel C)
\end{array}
\]
(the reuse of symbols from Section \ref{sec:bicategory} for these structural morphisms should not cause any confusion). These
isomorphisms can then be lifted to strategies:

\[
\begin{array}{rcrcl}
\lift{\rho_{A}} &:& A \parallel 1 &\profto& A\\
\lift{\lambda_{A}} &:& 1\parallel A &\profto& A\\
\lift{s_{A, B}}  &:& A\parallel B &\profto& B \parallel A\\
\lift{\alpha_{A, B, C}} &:& (A \parallel B) \parallel C &\profto& A \parallel (B\parallel C)
\end{array}
\]
which inherit from $\ES$ all the coherence laws of the symmetric monoidal structure by Lemma \ref{lem:lifting_functorial}.
It remains to prove that these families are natural.

%
%
%
%
%
%
%

\begin{lem}
The families $\rho_A, \lambda_A, s_{A, B}, \alpha_{A, B, C}$ are natural in all their components.
\end{lem}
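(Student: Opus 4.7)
The plan is to reduce naturality in $\CG$ to naturality in $\ES$, using Lemma \ref{lem:lifting_key} as the bridge. Each of the four families in question is the lifting of an isomorphism in $\ESP$ (namely $\rho_A, \lambda_A, s_{A,B}, \alpha_{A,B,C}$), and each of these underlying isomorphisms, together with their inverses, is visibly receptive and courteous. By Lemma \ref{lem:lifting_functorial} together with Lemma \ref{lem:lifting_key}, composition in $\CG$ with such a lifted structural iso can be replaced up to isomorphism of strategies by post-composition with the corresponding map of esps, and this is exactly what we need to convert a naturality square in $\CG$ into the naturality of the underlying symmetric monoidal structure of $\ES$.

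The one technical point is that Lemma \ref{lem:lifting_key} covers two cases: composition with a lifting $\lift{f}$ in outer position, and with a co-lifting $\lift{f}^\perp$ in inner position. In a naturality square, one of the two structural isos appears in inner position. The step I would carry out is a small sub-lemma: for any isomorphism $f : A \to B$ in $\ESP$ such that both $f$ and $f^{-1}$ are receptive and courteous, one has
\[
\lift{f} \;\iso\; \lift{(f^{-1})^\perp}^\perp
\]
as strategies $A \profto B$. Indeed, both are (on the nose) copycat twisted by $f$ on one side; the iso is realized by the map $f^\perp \parallel f : \CCC_A \to \CCC_B$, which is a bijection of events commuting with the labelling maps. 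Consequently, for our structural isos we may always turn the inner lifted iso into a co-lifting and again apply Lemma \ref{lem:lifting_key}.

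To illustrate the method on the symmetry, consider $\sigma : A \profto A'$ and $\tau : B \profto B'$. Lemma \ref{lem:lifting_key} applied to the outer case gives
\[
\lift{s_{A',B'}} \odot (\sigma \tensor \tau) \;\iso\; \bigl((A\parallel B)^\perp \parallel s_{A',B'}\bigr)\circ(\sigma \tensor \tau),
\]
while rewriting $\lift{s_{A,B}} \iso \lift{(s_{A,B}^{-1})^\perp}^\perp$ and applying the co-lifting case gives
\[
(\tau \tensor \sigma) \odot \lift{s_{A,B}} \;\iso\; \bigl((s_{A,B}^{-1})^\perp \parallel (B'\parallel A')\bigr)\circ (\tau \tensor \sigma).
\]
Unfolding $\tensor$ in terms of $\gamma$ and of the parallel composition of maps, both right-hand sides are maps of esps whose equality (modulo the obvious iso of esps $s_{S,T} : S \parallel T \to T \parallel S$ relating their sources) is precisely the naturality of $s$ in $\ES$, augmented by the standard coherence identities for $\gamma$ in the symmetric monoidal category $\ESP$.

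The cases of $\rho_A$, $\lambda_A$ and $\alpha_{A,B,C}$ are treated in exactly the same way: reduce each side of the square via Lemma \ref{lem:lifting_key} (switching to the co-lifting form when needed by the sub-lemma above) and invoke naturality of the corresponding structural iso in $\ES$. The only real obstacle is notational bookkeeping of $(-)^\perp$, $\parallel$, and the reassociators $\gamma$; no new idea is needed beyond what has been set up in Sections \ref{sec:evstrat}--\ref{sec:bicategory} and the sub-lemma identifying a lifting of an iso with the co-lifting of the dual of its inverse.
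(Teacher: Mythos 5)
Your proof is correct and takes essentially the same route as the paper's: both reduce each naturality square in $\CG$ to naturality of the structural isomorphisms in $\ES$ via the two parts of Lemma \ref{lem:lifting_key}, using the identification $\lift{f} \iso \lift{(f^{-1})^\perp}^\perp$ (which the paper invokes as ``obvious'' in the case of $s_{A,B}$) to turn the inner lifted isomorphism into a co-lifting, and conclude by checking that $s_{S,T}$ realizes the resulting isomorphism of maps of esps. Your explicit sub-lemma, with the realizing map $f^\perp \parallel f = \CCC_f$, merely spells out what the paper leaves implicit.
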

\begin{proof}
A direct verification. For illustration, we detail the naturality of $s_{A, B}$.

Let $\sigma : S \to A_1^\perp \parallel A_2$, and $\tau : T \to B_1^\perp \parallel B_2$. We need to check:
\[
\lift{s_{A_2, B_2}} \odot (\sigma \tensor \tau) \iso (\tau \tensor \sigma) \odot \lift{s_{A_1, B_1}}
\]
But there is an obvious isomorphism $\lift{s_{A_1, B_1}} \iso \lift{s_{A_1^\perp, B_1^\perp}^{-1}}^\perp$. So by
both parts of Lemma \ref{lem:lifting_key}, this amounts to finding an isomorphism between the two maps:
\[
\xymatrix@R=0pt@C=165pt{
S\parallel T
        \ar[r]^-{((A_1 \parallel B_1)^\perp \parallel s_{A_2, B_2})\circ \gamma_{A_1^\perp, A_2, B_1^\perp, B_2} \circ (\sigma \parallel \tau)}&
(A_1\parallel B_1)^\perp \parallel (B_2 \parallel A_2)\\
T\parallel S
        \ar[r]^-{(s_{A_1^\perp, B_1^\perp}^{-1} \parallel (B_2 \parallel A_2))\circ \gamma_{B_1^\perp, B_2, A_1^\perp, A_2} \circ (\tau \parallel \sigma)}&
(A_1\parallel B_1)^\perp \parallel (B_2 \parallel A_2)
}
\]
and it is a simple verification to check that $s_{S, T}$ does the trick.
\end{proof}

This concludes the symmetric monoidal structure of $\CG$.

\subsection{Compact closed structure}

The dual of a game $A$ is simply defined as $A^\perp$. We have two strategies:
\[
\begin{array}{rcrcl}
\eta_A &:& \CCC_A &\to& 1^\perp \parallel (A^\perp \parallel A)\\
\epsilon_A &:& \CCC_A &\to& (A\parallel A^\perp)^\perp \parallel 1
\end{array}
\]
defined in the obvious way. We have:

\begin{prop}
The strategies $\eta_A : 1 \profto A^\perp \parallel A$ and $\epsilon_A : A \parallel A^\perp \profto 1$ satisfy the laws for a
compact closed category.
\end{prop}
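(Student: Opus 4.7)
The plan is to reduce both snake identities to the isomorphism $\ccc_A \odot \ccc_A \iso \ccc_A$, which holds because $\ccc_A$ itself is a strategy ($\ccc_A$ being trivially receptive and courteous) so Definition \ref{def:strategy} applies.

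First, I would observe that $\eta_A$ and $\epsilon_A$ are, at the level of the underlying esps, copies of $\ccc_A$ precomposed with structural isomorphisms; more precisely, $\eta_A = \lift{h_\eta}$ and $\epsilon_A = \lift{h_\epsilon}^\perp$ for the canonical esps isomorphisms $h_\eta \colon A^\perp \parallel A \to 1^\perp \parallel (A^\perp \parallel A)$ and $h_\epsilon \colon (A \parallel A^\perp)^\perp \parallel 1 \to A^\perp \parallel A$. The structural morphisms $\lambda, \rho, \alpha$ of the symmetric monoidal structure were already defined as liftings of esps isos. A short additional verification is needed to establish that $\ccc_A \tensor \lift{f}$ and $\lift{f} \tensor \ccc_A$ are themselves liftings (of the map $\id_A \parallel f$ up to reassociation), which follows by unfolding the definitions of the tensor product of strategies and of the copycat strategy.

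Then, by repeated applications of Lemmas \ref{lem:lifting_key} and \ref{lem:lifting_functorial}, the snake composition $\lambda_A \odot (\epsilon_A \tensor \ccc_A) \odot \alpha^{-1} \odot (\ccc_A \tensor \eta_A) \odot \rho_A^{-1}$ reduces, up to isomorphism, to $\lift{g_2}^\perp \odot (\ccc_A \odot \ccc_A) \odot \lift{g_1}$ for certain esps isomorphisms $g_1, g_2$ whose composition $g_2 \circ g_1$ is the identity on $A^\perp \parallel A$. This last equality of esps isos is a direct check, tracing each event along the ``snake'' path through the structural isomorphisms: it is precisely the ordinary snake identity in the monoidal category $\ES$ (equipped with $\parallel$), which is trivial because every object there is self-dual via the identity. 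Applying $\ccc_A \odot \ccc_A \iso \ccc_A$ and a final use of Lemma \ref{lem:lifting_key} together with $g_2 \circ g_1 = \id$ yields $\ccc_A$. The second snake identity for $A^\perp$ is established symmetrically (formally, by dualizing the whole argument via the functor $(-)^\perp$).

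The main obstacle is not conceptual but notational: carefully bookkeeping the structural isomorphisms and the polarities in tensor products of copycats with lifted maps. Once every ingredient is identified as a (co-)lifting of an esps map, the identity descends to a triviality in $\ES$, and the isomorphisms of strategies produced by Lemmas \ref{lem:lifting_key} and \ref{lem:lifting_functorial} assemble into the required ones.
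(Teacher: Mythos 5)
Your reduction collapses at its first step: $\eta_A$ and $\epsilon_A$ are \emph{not} (co-)liftings, and neither are $\ccc_A \tensor \eta_A$ or $\epsilon_A \tensor \ccc_A$. In the paper's definition, the lifting of $f : X \to Y$ is the strategy $\lift{f} : \CCC_X \to X^\perp \parallel Y$ with labelling $(X^\perp \parallel f)\circ \ccc_X$: its underlying esp is $\CCC_X$, and the relabelling touches only the \emph{right-hand} component of the game. Now check the types: $\lift{h_\eta}$ for your $h_\eta : A^\perp \parallel A \to 1^\perp \parallel (A^\perp \parallel A)$ would have underlying esp $\CCC_{A^\perp \parallel A}$ (four copies of the events of $A$), whereas $\eta_A$ has esp $\CCC_A$ (two copies); similarly $\ccc_A \tensor \eta_A$ has esp $\CCC_A \parallel \CCC_A$ (four copies), whereas a lifting of $\id_A \parallel h_\eta : A \parallel 1 \to A \parallel (A^\perp \parallel A)$ would have esp $\CCC_{A \parallel 1}$ (two copies). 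The mismatch is structural, not notational: $\eta_A$ is copycat with \emph{both} halves of its game placed on the right of the $\parallel$ in $1^\perp \parallel (A^\perp \parallel A)$, and no map of the form $X^\perp \parallel f$ (lifting) or $f \parallel Y$ (co-lifting) can move events across that boundary. This ``wire-bending'' is exactly what distinguishes the unit and counit from structural morphisms, so Lemmas \ref{lem:lifting_key} and \ref{lem:lifting_functorial} can never absorb them, and the proposed reduction of the snake to $\lift{g_2}^\perp \odot (\ccc_A \odot \ccc_A) \odot \lift{g_1}$ does not go through. Your final appeal to ``the snake identity in $\ES$'' is also empty: $(\ES, \parallel)$ has no duality --- for nonempty $A$ there is no total map $A \parallel A \to 1$ at all, and the only map $1 \to A \parallel A$ is the empty one --- so there is no snake identity there to invoke.

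What is salvageable: the outer structural morphisms $\lift{\lambda_A}$, $\lift{\alpha_{A,A^\perp,A}^{-1}}$, $\lift{\rho_A}^{-1}$ can indeed be absorbed as game relabellings via Lemma \ref{lem:lifting_key}, and $\ccc_A \odot \ccc_A \iso \ccc_A$ does hold. But after that you are left with (a relabelling of) $(\epsilon_A \tensor \ccc_A) \odot (\ccc_A \tensor \eta_A)$, a composition in which copycat causal links are chained through the middle game $(A \parallel A^\perp) \parallel A$, and showing that this chain of asynchronous forwarders collapses to a single copycat is the actual mathematical content of the proposition; it is not an instance of the unitor or lifting lemmas. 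This is why the paper argues directly on configurations: it forms the $5$-ary interaction $\xi : U \to \cdots$, uses the zipping lemma (Lemma \ref{lem:zipping}) to get a hiding map from $U$ onto the composite $S$, characterises $\conf{U}$ as tuples of configurations of $A$ linked by a chain of Scott-order constraints (Lemma \ref{lemma:scott}), and checks that the configurations whose maximal events are all visible are exactly the configurations of $\CCC_A$. Some analysis of the interaction itself, of this kind, is unavoidable here.
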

\begin{proof}
We need to check the two equations of duals in compact closed categories:
\begin{eqnarray*}
\ccc_A &\iso& \lift{\lambda_{A}} \odot (\epsilon_A \tensor \ccc_A) \odot \lift{\alpha_{A, A^\perp,
A}^{-1}} \odot (\ccc_A \tensor \eta_A) \odot \lift{\rho_{A}}^{-1}\\
\ccc_{A^\perp} &\iso& \lift{\rho_{A^\perp}} \odot (\ccc_{A^\perp} \tensor \epsilon_A) \odot
\lift{\alpha_{A^\perp, A, A^\perp}} \odot (\eta_A \tensor \ccc_{A^\perp}) \odot
\lift{\lambda_{A^\perp}}^{-1}
\end{eqnarray*}
These two isomorphisms are symmetric; we only check the first. Let us write $\sigma : S \to A^\perp \parallel A$ for the resulting composition, and
\[
\xi : U \to A \parallel (A \parallel 1) \parallel (A \parallel (A\parallel A)) \parallel ((A\parallel A) \parallel A) \parallel (1 \parallel A) \parallel A
\]
for the corresponding $5$-ary composition. By Lemma \ref{lem:zipping}, there is a hiding map
$\hid : U \pto S$, commuting with the projection to the game. From the characterisation of configurations
of pullbacks, and after eliminating redundancies, configurations of $U$ correspond to the data of a configuration in each component $A$ above,
satisfying the following constraints:
\[
\xymatrix@C=12pt{
A       \ar@{}[r]|\parallel
        \ar@{-}@/_2pc/[r]|{\sqsupseteq^1}&
(A      \ar@{}[r]|\parallel
        \ar@{-}@/^2pc/[rr]|{\sqsupseteq^2}&
1)      \ar@{}[r]|\parallel&
(A      \ar@{}[r]|\parallel
        \ar@{}@/_2pc/[rrr]|{\sqsupseteq^2}&
(A      \ar@{}[r]|\parallel
        \ar@{}@/^2pc/[r]|{\sqsupseteq^2}&
A))     \ar@{}[r]|\parallel
        \ar@{}@/_2pc/[rrr]|{\sqsupseteq^2}&
((A     \ar@{}[r]|\parallel
        \ar@{}@/^2pc/[r]|{\sqsupseteq^2}&
A)      \ar@{}@/^2pc/[lll]|{\sqsubseteq^2}
        \ar@{}[r]|\parallel&
A)      \ar@{}[r]|\parallel
        \ar@{}@/^2pc/[rr]|{\sqsupseteq^2}&
(1      \ar@{}[r]|\parallel&
A)      \ar@{}[r]|\parallel
        \ar@{}@/_2pc/[r]|{\sqsupseteq^3}&
A
}
\]
where, moreover, configurations whose maximal events are visible (and so correspond to configurations of $S$) are those where the $\sqsubseteq^1$ are replaced by
$\supseteq^+$, the $\sqsubseteq^2$ are replaced by equalities and the $\sqsubseteq^3$ are replaced by $\subseteq^-$.
Such configurations exactly correspond to those of $\CCC_A$.
\end{proof}

This
concludes the proof that $\CG$ is a compact closed category.

\section{Conclusions}
\label{sec:conclusion}
In this paper, we gave a detailed exposition of the results of \cite{lics11}, along with
some extensions.
We presented a notion of concurrent games based on event structures, which is
a concurrent analogue of Joyal's compact closed category of Conway games \cite{JoyalGazette}. 

We first defined pre-strategies, as certain event structures describing the evolution
of concurrent processes on an interface presented as a game. We defined strategies as
those pre-strategies stable under the action of an asynchronous forwarder, presented as
the copycat strategy. Finally, we proved that composition of strategies obeys the laws of a 
bicategory, and that just as Joyal's, the corresponding quotient category is compact
closed. As {explained} in \cite{DBLP:conf/fossacs/Winskel13}, it relates to the compact closed
bicategory of profunctors via a lax functor.

\paragraph{Further work.}
The developments presented in this paper are just the beginning of the story.
Since the appearance of \cite{lics11}, this framework has been used as a basis
for a number of extensions. In \cite{DBLP:conf/lics/ClairambaultGW12}, games were equipped with winning conditions. It was
proved that winning strategies also form a bicategory, and that just as in the sequential
case, well-founded games that satisfy a further condition called \emph{race-freeness} are
determined. This was later extended to all Borel winning conditions \cite{DBLP:journals/jcss/GutierrezW14}, provided in addition that concurrency
is bounded in the game. Winning conditions
were also generalized to a quantitative notion of payoff in \cite{DBLP:journals/entcs/ClairambaultW13}, and a value theorem was proved.
As witnessed by these determinacy results, and despite concurrency, our games remain total
information games (unlike \emph{e.g.} \cite{cgtcs07}). We investigated in \cite{Kozen, DBLP:conf/birthday/ClairambaultGW13} an extension to partial information
games, where determinacy is lost. The fourth author also extended the setting to probabilistic and 
quantum strategies \cite{DBLP:journals/entcs/Winskel13}.

In our basic setting, games are affine: each event can occur at most \emph{once}. It is 
key for many applications (most notably to semantics) that one 
allows the replication of events,
in such a way that distinct copies are indistinguishable from each other. To this effect, 
we equipped games with a notion of symmetry expressing indistinguishability of events and configurations. Strategies
then have to respect this additional structure, by treating symmetric configurations uniformly. This can be done in two ways: the first option
is to saturate strategies by forcing them to play non-deterministically all symmetric events.
In \cite{lics14}, we developed a bicategory of saturated strategies on games with symmetry, using it to
allow replication and construct analogues of AJM \cite{DBLP:journals/iandc/AbramskyJM00} and HO \cite{DBLP:journals/iandc/HylandO00} games. 
In \cite{DBLP:conf/lics/CastellanCW15} we developed a second
option, and showed that with some minimality assumption on strategies one could obtain a bicategory
of uniform strategies while avoiding saturation and the addition of 
redundant non-deterministic choices.
We showed that this gave a cartesian closed category, supporting an intensionally fully abstract
model of PCF where independent sub-computations are performed in parallel.

\paragraph{Perspectives.} 
There is a lot of ongoing work on the topic of concurrent games on event structures. On the fundamental
side, we have looked 
for a generalization of the basic setting presented here that accommodates better events with \emph{disjunctive causality},
\emph{i.e.} that can occur for several distinct yet compatible reasons~\cite{deVismeWinskel-archived}. On the semantic side, we have 
several research directions. To
name a few, we want to represent non-interference as determinism in concurrent languages; to enrich strategies to keep information about possible
local deadlocks or divergences; to investigate further strategies from the point of view of concurrent processes~\cite{CHLW}; and to mix symmetry with probabilities in order to build a denotational model combining
probabilities, non-determinism and concurrency.

But beyond semantics, our concurrent games give a powerful and precise description of the evolution of concurrent
processes. We wish to extend this basic framework in order to set a standard for a concurrent notion of games and
strategies. We hope this framework will then be a relevant and useful tool for 
various purposes, from handling algorithmic issues in concurrency to 
investigating its logical properties.\newpage

\bibliographystyle{alpha}
\bibliography{main}

\end{document}